\numberwithin{equation}{section}
\theoremstyle{plain}
\newtheorem{theorem}{Theorem}[section]
\newtheorem{lemma}[theorem]{Lemma}
\newtheorem{corollary}[theorem]{Corollary}
\newtheorem{proposition}[theorem]{Proposition}
\theoremstyle{definition}
\newtheorem{conjecture}[theorem]{Conjecture}
\theoremstyle{remark}
\newtheorem{remark}[theorem]{Remark}
\renewcommand{\Re}{\operatorname{Re}}
\newcommand{\sym}{\operatorname{sym}}
\newcommand{\Sym}{\operatorname{Sym}}
\newcommand{\GL}{\operatorname{GL}}
\newcommand{\SL}{\operatorname{SL}}
\newcommand{\dd}{\mathrm{d}}
\begin{document}

\title
{Joint cubic moment of Eisenstein series and Hecke-Maass cusp forms}
\author{Chengliang Guo}
\address{School of Mathematics \\ Shandong University \\ Jinan \\Shandong 250100 \\China}
\email{chengliang.guo@mail.sdu.edu.cn}

\date{\today}

\begin{abstract}
  Let $\psi$ be a smooth compactly supported function on $\mathbb{X} = \SL(2,\mathbb{Z})\backslash\mathbb{H}$. In this paper, we are interested in the joint cubic moments of automorphic forms when the spectral parameters go to infinity.  We show that the diagonal case for Eisenstein series $\int_{\mathbb{X}}\psi(z)E(z,1/2+it)^{3} \dd\mu z = \mathcal{O}_{\psi}(t^{-1/3+\varepsilon})$. In off-diagonal case  we prove $\frac{1}{2\log t}\int_{\mathbb{X}}\psi(z)|E(z,1/2+it)|^{2}g(z)\dd\mu z = o(1)$ as long as $\min\{t , t_{g}\} \rightarrow \infty$. Finally we show 
  $\int_{\mathbb{X}}\psi(z)f^{2}(z)g(z)\dd\mu z  = o(1)$ in the range $|t_{f} - t_{g}| \leq t_{f}^{2/3-\varepsilon}$ where $f,g$ are two Hecke-Maass cusp forms. 
\end{abstract}

\keywords{Eisenstein series, cubic moment, joint value distribution, automorphic forms, $L$-functions, random wave conjecture}

\nocite{*}
\maketitle

\section{Introduction}
Let $\mathbb{H}=\{z=x+iy:x\in\mathbb{R}, y>0\}$ be the upper half plane with the hyperbolic measure $\dd\mu z = \frac{\dd x\dd y}{y^2}$ and $\Gamma=\SL(2,\mathbb{Z})$ the modular group.
For automorphic functions on $\mathbb{X}:=\Gamma\backslash\mathbb{H}$ we have the Petersson inner product which is defined by $\langle f,g\rangle := \int_{\mathbb{X}} f(z)\overline{g(z)} \dd\mu z$.
The Laplacian is given by $\Delta=-y^2 (\partial^2/\partial x^2+\partial^2/\partial y^2)$, which has both discrete and continuous spectra.
The discrete spectrum consists of the constants and the space of cusp forms, for which we can take an orthonormal basis $\{\phi_{k}\}$  of Hecke-Maass cusp forms.\par
The value distribution of eigenfunctions in the semiclassical limit is one of the main problems in analytic number theory and quantum chaos.
Originally formulated by Berry \cite{MR489542} for quantizations of chaotic Hamiltonians, this
 conjecture predicts that, in the case of negative curvature, the Laplace eigenfunctions
 $F$ tend to exhibit Gaussian random behavior in the high energy limit.  In particular, Hejhal and Rackner \cite{MR1257286} gave convincing numerical evidence when $\mathbb{X} = \SL(2,\mathbb{Z})\backslash\mathbb{H}$ and
 $F$ is an Eisenstein series or a Hecke-Maass cusp form. More details of Gaussian moments conjecture are introduced in \cite[Conjecture 1.1]{MR3831279}.\\

Let $J\geq2$ be a fixed integer.
Let $f_j$, $1\leq j\leq J$ be $L^{2}$ normalized Hecke-Maass cusp forms such that $\langle f_i,f_j\rangle=0$ for all $1\leq i\neq j\leq J$ and $\langle f_j ,f_j\rangle = 1$.   Recently, Hua, Huang and Li \cite{hua2024jointvaluedistributionheckemaass} formulated the following conjecture, which predicts that the values of distinct Hecke--Maass cusp forms should behave like  independent random waves.
\begin{conjecture}\cite[Conjecture 1.3]{hua2024jointvaluedistributionheckemaass}\label{conj:2}
  With $f_j$ as above and integers $a_j$. Then $\{f_j^{a_j}\}_{j=1}^{J}$ are statistically independent; that is,
  for any $\psi\in \mathcal{C}_c^\infty( \mathbb{X})$, we have
  \[
     \int_{\mathbb{X}} \psi(z) \prod_{j=1}^{J} f_j(z)^{a_j}  \dd\mu z
    = \prod_{j=1}^{J} c_{a_j} \int_{\mathbb{X}}\psi(z)\dd\mu z + o(1)
  \]
  as $\min(t_{f_1},\ldots,t_{f_J})$ goes to infinity. Here $c_n$ is defined by
  \begin{equation}
		\begin{aligned}
			c_{n} = \left\{	
			\begin{array}{lr}		
				(\frac{3}{\pi})^{n/2}(n-1)!! &\textrm{if $n$ is even,} \\
                    0,  & \textrm{if $n$ is odd.}
			\end{array}
			\right.
		\end{aligned}			
	\end{equation}
\end{conjecture}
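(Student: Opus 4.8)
The plan is to treat the integral spectrally. Write $F = \prod_{j=1}^{J} f_j^{a_j}$; since each $f_j$ is a cusp form, $F$ is smooth and rapidly decaying in the cusp, hence lies in $L^2(\mathbb{X})$. Decomposing $F$ against the orthonormal basis $\{\phi_k\}$ of Hecke--Maass cusp forms together with the constant function and the Eisenstein continuous spectrum, and then pairing with $\psi$, one obtains
\begin{equation}
  \int_{\mathbb{X}} \psi(z) F(z)\,\dd\mu z
  = \frac{\langle F,1\rangle}{\vol(\mathbb{X})}\int_{\mathbb{X}}\psi\,\dd\mu
    + \sum_{k}\langle F,\phi_k\rangle\langle\phi_k,\psi\rangle
    + \frac{1}{4\pi}\int_{-\infty}^{\infty}\langle F,E(\cdot,\tfrac12+it)\rangle\,\langle E(\cdot,\tfrac12+it),\psi\rangle\,\dd t .
\end{equation}
Since $\psi\in\mathcal{C}^\infty_c(\mathbb{X})$, its spectral coefficients $\langle\phi_k,\psi\rangle$ and $\langle E(\cdot,1/2+it),\psi\rangle$ decay faster than any polynomial in the spectral parameter, so only finitely many low-lying $\phi_k$ and a bounded window of the Eisenstein spectrum contribute up to a negligible error. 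Thus the conjecture reduces to two tasks: (i) the \emph{main-term asymptotic} $\langle F,1\rangle \to \vol(\mathbb{X})\prod_{j}c_{a_j}$, and (ii) the \emph{equidistribution} $\langle F,\phi_k\rangle\to 0$ for each fixed cusp form $\phi_k$, with the analogous statement for each fixed Eisenstein datum, as $\min_j t_{f_j}\to\infty$. Equivalently, one must show that $F\,\dd\mu$ equidistributes to $\prod_j c_{a_j}$ times $\dd\mu$ — a higher-moment quantum-unique-ergodicity statement.

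Next I would reduce every period $\langle F,\phi_k\rangle$ and $\langle F,1\rangle$ to central $L$-values by iterated spectral decomposition. Peeling off one factor at a time, $\langle F,\phi_k\rangle$ expands into a nested sum of triple-product periods $\langle f_a f_b,\phi_c\rangle$, each evaluated through the Watson--Ichino formula as
\begin{equation}
  \bigl|\langle f_a f_b,\phi_c\rangle\bigr|^2
  = \frac{\Lambda\!\left(\tfrac12,\,f_a\times f_b\times\phi_c\right)}
         {\Lambda(1,\Sym^2 f_a)\,\Lambda(1,\Sym^2 f_b)\,\Lambda(1,\Sym^2\phi_c)}\cdot(\text{explicit constant}),
\end{equation}
where $\Lambda$ carries the archimedean $\Gamma$-factors. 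The archimedean factor supplies the combinatorial structure: it decays exponentially once the spectral parameters violate the triangle inequality $\bigl|t_a-t_b\bigr|\le t_c\le t_a+t_b$, so only \emph{balanced} configurations survive. The Gaussian moments then emerge by a Wick-type pairing: the leading contributions to $\langle F,1\rangle$ come from grouping the $\sum_j a_j$ factors into pairs of \emph{equal} forms $f_j$, each pair contributing the $L^2$-mass of $f_j^2$ (governed by QUE), while pairings mixing distinct $f_i,f_j$ produce no main term because $L(1/2,f_i\times f_j\times\cdots)$ carries no diagonal — here the hypothesis $\langle f_i,f_j\rangle=0$ enters. This simultaneously forces the odd moments to vanish and reproduces $\prod_j c_{a_j}=\prod_j(3/\pi)^{a_j/2}(a_j-1)!!$, using $\vol(\mathbb{X})=\pi/3$.

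The decisive — and in general currently insurmountable — step is the uniform control of the error terms in (ii). After the reduction these become spectral sums of triple-product $L$-values, for instance
\begin{equation}
  \sum_{t_k\le T}\frac{L\!\left(\tfrac12,\,\Sym^2 f\times\phi_k\right)}{\Lambda(1,\Sym^2\phi_k)}\,\omega(t_k)
\end{equation}
and their iterates, which must exhibit cancellation as the $t_{f_j}\to\infty$. For the pure second moment this is exactly quantum unique ergodicity, known through the work of Lindenstrauss, Soundararajan and Holowinsky. For the cubic and mixed moments one needs bounds for the cubic moment of the relevant $L$-functions (in the spirit of Conrey--Iwaniec and Petrow--Young), which presently deliver the required saving only in restricted ranges — precisely the origin of the constraint $|t_f-t_g|\le t_f^{2/3-\varepsilon}$ and of the $\log t$ normalization in the Eisenstein cases treated in this paper. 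For arbitrary $J$ and arbitrary exponents $a_j$ the argument demands nontrivial estimates for \emph{arbitrarily high} moments of families of $L$-functions, which lie well beyond current technology. I therefore expect this $L$-function moment barrier, rather than any structural difficulty in the spectral set-up, to be the genuine obstacle, and it is what keeps the full Conjecture~\ref{conj:2} out of reach while the special cases established here remain provable.
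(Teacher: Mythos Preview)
This statement is a \emph{conjecture}, not a theorem: the paper quotes it from \cite{hua2024jointvaluedistributionheckemaass} as motivation and does not attempt to prove it. There is therefore no proof in the paper to compare your proposal against. You have correctly recognized this yourself in your final paragraph, where you explain that the required $L$-function moment bounds for general $J$ and $a_j$ lie ``well beyond current technology'' and that only the special cases treated in the paper are within reach.

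Your sketch of the expected strategy --- spectral decomposition, truncation via the rapid decay of $\langle\psi,\phi_k\rangle$, iterated Watson--Ichino to reduce to central $L$-values, and the Wick-pairing heuristic for the main term --- is a reasonable outline of how one would approach the problem, and it is indeed the template the paper follows for the accessible cases $(J,a_1,a_2)=(2,2,1)$ and $(1,3)$. But as a proof of the full conjecture it is not a proof at all: the ``decisive step'' you identify is genuinely open, and no amount of structural set-up bridges that gap. So there is no error to flag here other than the category error of submitting a heuristic discussion as a proof proposal for an unproved conjecture.
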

In\cite[Theorem 1.4]{hua2024jointvaluedistributionheckemaass} they gave some evidence for $J = 2$ and $(a_{1}, a_{2}) = (2, 1)$ under generalized Lindel\"of Hypothesis (GLH). In fact they showed a beautiful asymptotic formula that we can clearly see the influence of conductor dropping phenomenon. And they  gave more details on higher moments under generalized Riemann Hypothesis (GRH) and generalized Ramanujan Conjecture (GRC).

 If $J = 1$, this is the random wave conjeture. 
 For $a=2$, this is the well-known quantum unique ergodicity (QUE) conjecture of Rudnick and Sarnak \cite{MR1266075}. QUE was solved by breakthrough papers of Lindenstrauss \cite{MR2195133} and
 Soundararajan \cite{MR2680500}. If we replace the Hecke-Maass cusp forms to normalized Eisenstein series Luo and Sarnak\cite{MR1361757} gave a similar asymptotic formula. The case $J=1 , a=3$ is proved by Huang \cite{huang2024cubic} with a power saving error term and if $\psi(z) = 1$ earlier proved by Watson\cite{watson2008rankintripleproductsquantum}. When $a = 4$ for more general groups the result is from Humphries and Khan\cite{humphries2023lpnormboundsautomorphicforms} that is roughly $\|\phi\|_{L^{4}} \ll t_{\phi}^{3/152+ \varepsilon}$. The sharp upper bound $\|\phi\|_{L^{4}} \ll t_{\phi}^{\varepsilon}$ is recently proved by Ki \cite{ki2023l4normssignchangesmaass} for $\Gamma = \SL(2,\mathbb{Z})$.\par
We hope to study the joint value distribution the Eisenstein series and Hecke-Maass cusp forms when the spectral parameters go to infinity. Now for non-negative integers $a_{1}, a_{2}$, We restrict our view in $a_{1}+ a_{2}  = 3$ and we are interested in the asymptotic behavior of the smooth joint cubic moment
\[ I := \int_{\mathbb{X}}\psi(z) F(z)^{a_1}G(z)^{a_2}\dd\mu z\]
where $\psi$  is a smooth compactly supported function and $F , G$ are  real-valued Hecke-Maass forms $\phi$ with spectral parameter  $t_{\phi}$ and $\langle\phi,\phi\rangle = 1$  or real-valued Eisenstein series $E_{t}^{\star}(z) = c_{t}E(z,1/2+it)$ where $E(z, 1/2+it)$ is the standard Eisenstein series and $c_{t} = \frac{\xi(1+2it)}{|\xi(1+2it)|}$. $\xi(2s) = \pi^{-s}\Gamma(s)\zeta(2s)$ is the complete zeta function. If $a_{1}$ or $a_{2}$ equals to zero, the problem comes back to original Gaussian moments conjecture.
\subsection{The cubic moment of Eisenstein series}
We denote $E_{t}(z) = E(z ,1/2+it)$. The first result in this paper is the following theorem.
\begin{theorem}\label{cubicmoment}For any compactly supported smooth function $\psi$ and $\varepsilon > 0$ ,  we have
\[I = \int_{\mathbb{X}}\psi(z)E_{t}^{\star}(z)^{3} \dd\mu z  \ll_{\psi,\varepsilon} t^{-1/3+\varepsilon}\]
    when $t \rightarrow \infty$.
\end{theorem}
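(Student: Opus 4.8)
The plan is to decompose $I$ spectrally, identify the cuspidal periods with Rankin--Selberg $L$-values, localise the spectral sum, and then bound the resulting short-interval moment; I would proceed as follows.

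\textbf{Reduction to a spectral sum.} Since $E_t^\star=c_tE_t$ is real-valued with $|c_t|=1$, it suffices to bound $\int_{\mathbb{X}}\psi E_t^3\,\dd\mu z=\langle E_t^2,\psi\overline{E_t}\rangle$. The factor $\psi\overline{E_t}$ is smooth and compactly supported, so this integral converges absolutely even though $E_t^2$ is only of moderate growth; regularising $\int_{\mathbb{X}}^{\mathrm{reg}}E_t^2$ following Zagier, Parseval gives
\[
\int_{\mathbb{X}}\psi E_t^3\,\dd\mu z \;=\; \frac{\langle E_t,\psi\rangle}{\mathrm{vol}(\mathbb{X})}\int_{\mathbb{X}}^{\mathrm{reg}}E_t^2\,\dd\mu z \;+\; \sum_j\langle E_t^2,\phi_j\rangle\,\langle E_t,\psi\phi_j\rangle \;+\;(\text{continuous-spectrum analogue}).
\]
Because $\psi$ is \emph{fixed}, repeated integration by parts against $\Delta$ gives $\langle E_t,\psi\rangle\ll_{\psi,A}t^{-A}$, so the first term is negligible (here $\int^{\mathrm{reg}}_{\mathbb{X}}E_t^2$ is an explicit ratio of zeta-values, of size $t^{\varepsilon}$). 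The continuous-spectrum term is structurally identical to the cuspidal one with $\zeta$-values replacing $L(\cdot,\phi_j)$, and is handled in the same way with room to spare; I therefore focus on $\sum_j\langle E_t^2,\phi_j\rangle\langle E_t,\psi\phi_j\rangle$.

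\textbf{The Rankin--Selberg period.} For $\Re w>1$ one unfolds $\int_{\mathbb{X}}E(z,w)E_t(z)\overline{\phi_j(z)}\,\dd\mu z=\int_{\Gamma_\infty\backslash\mathbb{H}}y^{w}E_t\overline{\phi_j}\,\dd\mu z$ (this is not permissible on the line $\Re w=\tfrac12$, whence the auxiliary variable $w$); the constant terms of $E_t$ are annihilated by cuspidality of $\phi_j$, the $x$-integral diagonalises the Fourier expansions, and the $y$-integral is a Mellin transform of a product of two $K$-Bessel functions. Continuing analytically to $w=\tfrac12+it$ — the only candidate pole, at $w=1$, has residue a multiple of $\int_{\mathbb{X}}E_t\overline{\phi_j}=0$ — yields
\[
\langle E_t^2,\phi_j\rangle \;=\; C\,\frac{\overline{\rho_j(1)}\,L(\tfrac12,\phi_j)\,L(\tfrac12+2it,\phi_j)}{\xi(1+2it)\,\zeta(1+2it)}\cdot\frac{\Gamma\!\big(\tfrac14+i\tfrac{2t+t_j}{2}\big)\Gamma\!\big(\tfrac14+i\tfrac{2t-t_j}{2}\big)\Gamma\!\big(\tfrac14+i\tfrac{t_j}{2}\big)\Gamma\!\big(\tfrac14-i\tfrac{t_j}{2}\big)}{\Gamma(\tfrac12+it)},
\]
with $C$ absolute and $\rho_j(1)$ the first Hecke-normalised Fourier coefficient of $\phi_j$. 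By Stirling together with $|\rho_j(1)|^2\asymp\cosh(\pi t_j)/L(1,\mathrm{sym}^2\phi_j)\asymp\cosh(\pi t_j)\,t_j^{\pm\varepsilon}$ and $(\log t)^{-1}\ll|\zeta(1+2it)|\ll\log t$, all exponential factors cancel, and for $t_j\asymp t$ one gets $|\langle E_t^2,\phi_j\rangle|\ll t^{-1+\varepsilon}\,|L(\tfrac12,\phi_j)\,L(\tfrac12+2it,\phi_j)|$.

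\textbf{Localisation and Cauchy--Schwarz.} Again by integration by parts against $\Delta$ (legitimate since $\psi$ is fixed), $\langle E_t,\psi\phi_j\rangle\ll_{A}(\log t)^{1/2}(1+|t-t_j|)^{-A}$, so only $|t_j-t|\ll t^{\varepsilon}$ contributes; and $\sum_j|\langle E_t,\psi\phi_j\rangle|^2=\|P_{\mathrm{cusp}}(\psi E_t)\|_2^2\le\int_{\mathbb{X}}|\psi|^2|E_t|^2\,\dd\mu z\ll\log t$ by the Luo--Sarnak bound for the mass of $|E_t|^2$ on a compact set. Inserting the bound of Step~2 and applying Cauchy--Schwarz,
\[
\Big|\int_{\mathbb{X}}\psi E_t^3\,\dd\mu z\Big|\;\ll\; t^{-1+\varepsilon}(\log t)^{1/2}\Big(\sum_{|t_j-t|\ll t^{\varepsilon}}\big|L(\tfrac12,\phi_j)\,L(\tfrac12+2it,\phi_j)\big|^2\Big)^{1/2}\;+\;(\text{smaller}),
\]
so $I\ll t^{-1/3+\varepsilon}$ follows once this short-interval moment is shown to be $\ll t^{4/3+\varepsilon}$.

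\textbf{The moment: the main obstacle.} This last estimate — equivalently, writing $|L(\tfrac12+2it,\phi_j)|^2=L(\tfrac12,\phi_j\times E_{2t})$, a short-interval first moment of the degree-$8$ product $L(\tfrac12,\phi_j)^2L(\tfrac12,\phi_j\times E_{2t})$ — is the heart of the proof and well out of reach of convexity. I would establish it by opening the Dirichlet series from Step~2, applying the Kuznetsov trace formula to the spectral average (the $|\rho_j(1)|^2$ produced by the two periods is precisely $\cosh(\pi t_j)$ times the Kuznetsov weight), and bounding the resulting diagonal term and off-diagonal Kloosterman sums; the off-diagonal is a shifted-convolution sum, to be treated by the Weil bound together with stationary-phase analysis of the Bessel transforms, uniformly in the large parameter $2t$. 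It is exactly here that the exponent $-\tfrac13$ is produced: it reflects the Weyl-type exponent for $\GL(2)$ $L$-functions in the $t$-aspect, and the analysis crucially exploits the mild conductor drop visible in the gamma factor $\Gamma(\tfrac14+i\tfrac{2t-t_j}{2})$ when $t_j\approx t$. Controlling this Kloosterman/shifted-convolution contribution uniformly in $t$ is where essentially all the difficulty lies; the remaining steps are routine.
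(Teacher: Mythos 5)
Your architecture is sound and genuinely different from the paper's. The paper first decomposes $\psi$ spectrally (so only $u_j$ with $t_j\ll t^{\varepsilon}$ survive, by integration by parts against $\Delta$), and then applies the regularized Plancherel formula a \emph{second} time to $\langle u_jE_t,E_t^2\rangle$; this produces a sum over $\phi_k$ with $t_k$ near $t$ of the triple product $L(\tfrac12+it,\phi_k\times u_j)L(\tfrac12-2it,\phi_k)L(\tfrac12,\phi_k)$, estimated by a $(\tfrac12,\tfrac14,\tfrac14)$ H\"older combining a conductor-dropping large-sieve second moment with the Jutila and Jutila--Motohashi fourth moments. Your single decomposition of $E_t^2$ against $\psi\overline{E_t}$, with localisation to $|t_j-t|\ll t^{\varepsilon}$ via the commutator argument and Bessel plus Luo--Sarnak to absorb the $\psi$-side, collapses this to the single short-interval moment $\sum_{|t_j-t|\ll t^{\varepsilon}}|L(\tfrac12,\phi_j)L(\tfrac12+2it,\phi_j)|^2\ll t^{4/3+\varepsilon}$, which is arguably cleaner. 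Two caveats on the reduction itself: your Parseval identity is missing the degenerate terms $\langle \mathcal{E}_{E_t^2},\psi\overline{E_t}\rangle$ from the regularized Plancherel formula (pairings of $\psi\overline{E_t}$ against $E(z,1+2it)$ and its companions); these are handled by unfolding and the rapid decay of the Mellin transform of the zero-th Fourier coefficient of $\psi\overline{E_t}$, exactly as the paper treats its $I_{reg}$, but they must be written down.

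The genuine gap is at the step you yourself call the heart of the proof. You assert the moment bound $\ll t^{4/3+\varepsilon}$ and propose to prove it from scratch by Kuznetsov, shifted convolutions, Weil bounds and stationary phase, uniformly in $t$ --- none of which is carried out; as written this is a research program, not a proof, and the claim that ``the remaining steps are routine'' inverts the actual situation. What you have missed is that no new moment computation is needed: by Cauchy--Schwarz,
\begin{equation*}
\sum_{|t_j-t|\le t^{\varepsilon}}\bigl|L(\tfrac12,\phi_j)L(\tfrac12+2it,\phi_j)\bigr|^2
\le\Bigl(\sum_{|t_j-t|\le t^{1/3+\varepsilon}}L(\tfrac12,\phi_j)^4\Bigr)^{1/2}
\Bigl(\sum_{|t_j-t|\le t^{1/3+\varepsilon}}|L(\tfrac12+2it,\phi_j)|^4\Bigr)^{1/2},
\end{equation*}
after enlarging the (nonnegative) sums to a window of length $t^{1/3+\varepsilon}$. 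The first factor is $\ll t^{4/3+\varepsilon}$ by Jutila's fourth moment, and the second is $\ll t^{4/3+\varepsilon}$ by the Jutila--Motohashi hybrid fourth moment with $K\asymp t$ and shift $2t$ (note there is no conductor drop here: the conductor of $L(\tfrac12+2it,\phi_j)$ for $t_j\approx t$ is $\asymp t^2$, which is exactly the regime their theorem covers). This gives $t^{4/3+\varepsilon}$ and hence $I\ll t^{-1/3+\varepsilon}$. These are precisely the same two inputs the paper feeds into its H\"older step, so your route closes using only quoted literature --- but the proposal as submitted does not close it.
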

Let $u_{j}$ be a Hecke Maass cusp form and the regularized integrals $\int_{\mathbb{X}}^{reg}$ is defined by equation (\ref{reg_subtract_Eisen}). By regularized Plancherel formula(see Lemma \ref{Prop_Regular_Planch}) we need study the  four parts 
\begin{equation*}
    \begin{aligned}
     I_{1} & :=\langle1, {E_{t}^{\star}}^{3}\rangle = c_{t}^{3}\int_{\mathbb{X}}^{reg}E_{t}(z)^{3} \dd\mu z, \\
     I_{2} & := \langle u_{j},{E_{t}^{\star}}^{3}\rangle  = c_{t}^{3}\int_{\mathbb{X}}u_{j}(z)E_{t}^{3}(z) \dd\mu z,\\
     I_{3} & :=  \langle E_{\tau} , {E_{t}^{\star}}^{3}\rangle_{reg}  =  c_{t}^{3}\int_{\mathbb{X}}^{reg}E(z , 1/2+i\tau)E_{t}(z)^{3} \dd\mu z,\\
     I_{reg}& := c_{t}^{3}(\langle \psi , \mathcal{E}_{E_{t}^{3}}\rangle_{reg} + \langle \mathcal{E}_{\psi}, E_{t}^{3}\rangle_{reg}).
    \end{aligned}
\end{equation*}
where $t_{j}, \tau \leq t^{\varepsilon}$ and $\mathcal{E}_{\Phi}(z)$ is defined by equation (\ref{Phi_def}) 
. Since for cubic moment of Eisenstein series we only obtain an upper bound, we remove the rotation $c_{t}$.\par
We will deal with $I_{1}$ by using Zagier's formula (\ref{3eisen}) about triple product of Eisenstein series. For $I_{4}$, we can explicitly calculate the $\mathcal{E}_{E_{t}^{3}}$ and use compactly supported property of $\psi$. Following two propositions are about the key parts $I_{2}$ and $I_{3}$ in the proof of Theorem \ref{cubicmoment}.
\begin{proposition}\label{Maasscase}For any $\varepsilon > 0$, $t_{j} < t^{1-\varepsilon}$, we have
\begin{equation*}
		\begin{aligned}
			I_{2}  = \langle u_{j},E_{t}^{3} \rangle\ll \left\{	
			\begin{array}{lr}		
				\frac{1}{t^{1/3-\varepsilon}} \quad &t_{j} \leq t^{1/3}\\
				\frac{t_{j}^{1/2}}{t^{1/2-\varepsilon}}\quad  &t^{1/2}\leq t_{j}\leq t^{1-\varepsilon}\\
			\end{array}
			\right.
		\end{aligned}			
	\end{equation*}
In particular, if $t_{j} \ll t^{\varepsilon}$, unconditionally, we get
\[\int_{\mathbb{X}}u_{j}(z)E_{t}(z)^{3} \dd\mu z \ll t^{-1/3 + \varepsilon} .\]
\end{proposition}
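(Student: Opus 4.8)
The plan is to unfold the integral $I_2 = \langle u_j, E_t^3\rangle$ against the Eisenstein series and convert it into a period/mean-value problem for $L$-functions. First I would use the standard unfolding of one copy of $E_t(z) = E(z,1/2+it)$, writing $E_t^3 = E_t \cdot E_t^2$ and applying the Rankin--Selberg unfolding $\int_{\mathbb{X}} u_j(z) E_t^2(z) E(z,s) \dd\mu z$ at the relevant point, so that
\begin{equation*}
  \langle u_j, E_t^3\rangle = \int_0^\infty \int_0^1 u_j(z)\, E_t(z)^2\, y^{1/2+it}\, \frac{\dd x\,\dd y}{y^2}
\end{equation*}
(up to the constant term issues that are handled by the regularization in Lemma \ref{Prop_Regular_Planch}). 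Inserting the Fourier--Whittaker expansions of $u_j$ and of $E_t$, the $x$-integral forces an additive relation among the frequencies, and one is left with a sum over $n_1, n_2, n_3$ with $\pm n_1 \pm n_2 \pm n_3 = 0$ of products $\lambda_j(n_0)\,\tau_{it}(n_1)\tau_{it}(n_2)$ (Hecke eigenvalues of $u_j$ against divisor-type coefficients of the two Eisenstein factors) times a $y$-integral of a product of three $K$-Bessel functions and a power of $y$. The upshot should be a spectral identity expressing $I_2$ in terms of $L(1/2, u_j)^? $ and shifted moments of $\zeta$; more precisely, invoking Watson-type / Ichino triple-product formulas one expects $|I_2|^2$ to be controlled by a central $L$-value of the form $L(1/2, u_j \times E_t \times E_t)$, which factors as $L(1/2+2it, u_j)\,L(1/2, u_j)\,L(1/2-2it,u_j)$ (times archimedean factors and the square of $\langle u_j, u_j\rangle^{-1}$-type normalization), with possibly an $L(1/2, u_j)^2$ contribution from the $n_1 = n_2$ ``diagonal'' in the two Eisenstein coefficients.

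The second step is to analyze the archimedean (Bessel) integral and the resulting gamma factors to locate the conductor of this $L$-value product. The three-Bessel integral $\int_0^\infty K_{it}(a_1 y)K_{it}(a_2 y) (\text{exp or power}) \dd y/y$ has the usual oscillatory/stationary behavior: when $t_j$ is small relative to $t$ the two factors $K_{it}$ are in their oscillatory range on overlapping scales and one sees the ``conductor dropping'' — the effective analytic conductor of $L(1/2\pm 2it, u_j)L(1/2,u_j)$ drops from the naive $t^2 t_j$ down to roughly $t\cdot t_j$ or $t^{4/3}$ depending on the regime, which is exactly what produces the two-regime bound in the statement. Concretely, in the regime $t_j \le t^{1/3}$ one expects the relevant $L$-value to have conductor $\asymp t^{2}$ after the drop, and the convexity (or subconvexity) bound for $\zeta$-type and $GL(2)$ $L$-functions combined with the $1/\langle E_t,E_t\rangle_{reg}$ normalization — which contributes a factor $\asymp 1/\log t$, harmless up to $t^\varepsilon$ — gives the claimed $t^{-1/3+\varepsilon}$; in the regime $t^{1/2}\le t_j \le t^{1-\varepsilon}$ the stationary phase gives a gain of a power of $t_j/t$ and produces the $t_j^{1/2}/t^{1/2-\varepsilon}$ bound. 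The ``in particular'' statement for $t_j \ll t^\varepsilon$ is then immediate from the first regime, and crucially it is unconditional because for small $t_j$ the factors $L(1/2\pm 2it, u_j)$ have conductor $\asymp t$ and one only needs the subconvex (indeed just convexity, or the standard $t^{-\delta}$ subconvexity of Good/Meurman) bound for $GL(2)$ $L$-functions in the $t$-aspect, while $L(1/2, u_j) = O_{t_j}(1)$ is a fixed constant.

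The main obstacle will be making the conductor-dropping rigorous and uniform in $t_j$ across the full range $t_j < t^{1-\varepsilon}$: one needs a careful stationary-phase analysis of the triple Bessel integral (or, equivalently, a careful treatment of the gamma factors in the approximate functional equation for the triple-product $L$-function) that tracks where the transition regimes $t_j \asymp t^{1/3}$ and $t_j \asymp t^{1/2}$ come from and shows there is no loss in the transition. A secondary technical point is that $E_t(z)^2$ and $E_t(z)^3$ are not square-integrable, so all the unfoldings must be done inside the regularized inner product $\langle\cdot,\cdot\rangle_{reg}$; one must check that the regularization terms (subtracting off the appropriate incomplete Eisenstein series, cf.\ equation (\ref{reg_subtract_Eisen})) contribute only to $I_1, I_3, I_{reg}$ and not spuriously to $I_2$, which follows since $u_j$ is cuspidal and therefore orthogonal to all the regularizing Eisenstein pieces. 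Once these two issues are handled, the bound follows by inserting the best available subconvexity input — and for the ``in particular'' clause only the classical $t$-aspect bound is needed, which is why that case is unconditional.
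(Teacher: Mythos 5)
There is a genuine gap, and it is the central step. Your reduction claims that $|I_2|^2$ is controlled by a single central value $L(1/2,u_j\times E_t\times E_t)=L(1/2+2it,u_j)L(1/2,u_j)L(1/2-2it,u_j)$ via a Watson/Ichino-type formula. That is the factorization of the \emph{quadratic} period $\langle u_j,E_t^2\rangle$ (a genuine triple product of three automorphic forms), not of $I_2=\langle u_j,E_t^3\rangle$, which is a period of \emph{four} forms, i.e.\ a degree-$8$ object with no known factorization into one central $L$-value. Your own unfolding already shows this: after unfolding one copy of $E_t$ you are left with the constant term of $u_jE_t^2$, a shifted convolution $\sum_{\pm n_1\pm n_2\pm n_3=0}\lambda_j(n_1)\eta_t(n_2)\eta_t(n_3)$ weighted by triple Bessel integrals, and this does not collapse to an Euler product. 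The paper resolves exactly this difficulty differently: it writes $I_2=\lim_{t'\to 0}\langle u_jE_t, E_tE_{t+t'}\rangle_{reg}$ and applies the regularized Plancherel formula (Lemma \ref{Prop_Regular_Planch}), so that $I_2$ becomes a \emph{spectral sum} $J_1+J_2+J_3$ (Proposition \ref{explicitMaass}) of products of two genuine Rankin--Selberg periods, each of which does factor, yielding a mixed moment of $L(1/2+it,\phi_k\times u_j)$, $L(1/2-2it,\phi_k)$ and $L(1/2,\phi_k)$ over $\phi_k$ localized (by Stirling) to $|t_k-t|\le t_j+t^\varepsilon$.

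A second, related problem is your claim that the unconditional case $t_j\ll t^\varepsilon$ needs only convexity or classical $t$-aspect subconvexity for $\GL(2)$. Even granting the correct structure, termwise bounds do not give $t^{-1/3+\varepsilon}$: for $t_k\approx t$ the values $L(1/2\pm 2it,\phi_k)$ are conductor-dropped and the available hybrid bound (\ref{hybridsub}) is not even subconvex there. The paper instead applies a $(1/2,1/4,1/4)$ H\"older inequality and uses the Jutila--Motohashi hybrid fourth moment (Lemma \ref{fourthLemma}) over a spectral window of length $t^{1/3}$ together with the large-sieve second moment of Lemma \ref{meanLin}; the threshold $t_j\asymp t^{1/3}$ in the statement comes from the minimal admissible window length $G\ge K^{1/3}$ in those fourth-moment estimates, not from a conductor computation of the archimedean integral as you suggest. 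Your stationary-phase heuristic for the transition regimes therefore does not account for where the exponents $1/3$ and the bound $t^{-1/3+\varepsilon}$ actually originate.
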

\begin{proposition}\label{Eisensteincase}For any $\varepsilon > 0$, $\tau < t^{1-\varepsilon}$, we have
\begin{equation*}
		\begin{aligned}
			I_{3} =  \langle E_{\tau} , E_{t}^{3}\rangle_{reg} \ll\left\{	
			\begin{array}{lr}		
				\frac{1}{t^{1/3-\varepsilon}} \quad &\tau \leq t^{1/3}\\
				\frac{\tau^{1/2}}{t^{1/2-\varepsilon}}\quad  &t^{1/3}\leq \tau\leq t^{1-\varepsilon}\\
			\end{array}
			\right.
		\end{aligned}			
	\end{equation*}
In particular, if $\tau \ll t^{\varepsilon}$, unconditionally, we get
\[ \int_{\mathbb{X}}^{reg}E(z , 1/2+i\tau)E_{t}(z)^{3} \dd\mu z \ll t^{-1/3+ \varepsilon} .\]
\end{proposition}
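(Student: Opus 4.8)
The plan is to run the argument in parallel with the proof of Proposition~\ref{Maasscase}, with the Hecke--Maass cusp form $u_{j}$ replaced by the Eisenstein series $E(z,1/2+i\tau)$. Two features are genuinely new: the integral no longer converges absolutely, so the naive unfolding used in the cuspidal case must be replaced by Zagier's regularized Rankin--Selberg unfolding (this is where Lemma~\ref{Prop_Regular_Planch} and the regularization behind~(\ref{reg_subtract_Eisen}) enter); and the automorphic $L$-functions attached to $u_{j}$ are replaced by products of $\zeta$-factors carrying shifts built from $t$ and $\tau$, so the relevant convexity input becomes a bound for $\zeta$ on the critical line.

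First I would unfold $E(z,1/2+i\tau)$ against $E_{t}^{3}$, reducing $I_{3}$ to $\int_{0}^{\infty}y^{-3/2+i\tau}\,a_{0}(y)\,\dd y$, where $a_{0}(y)$ is the regularized zeroth Fourier coefficient of $E_{t}^{3}$. Using the Fourier expansion of $E_{t}$ in terms of $\sqrt{y}\,K_{it}(2\pi|n|y)$ and divisor sums $\sigma_{-2it}(|n|)$, this coefficient splits into three pieces: a pure power-of-$y$ part, from the constant terms of all three factors, which is annihilated by the regularization; a ``hook'' part, from one zero frequency and two opposite frequencies, which after the explicit Mellin transform of $K_{it}^{2}$ and Ramanujan's identity $\sum_{n}\sigma_{2it}(n)\sigma_{-2it}(n)n^{-w}=\zeta(w)^{2}\zeta(w-2it)\zeta(w+2it)/\zeta(2w)$ becomes $\zeta$-values at shifts $t,\tau$ times an explicit ratio of Gamma factors; and a ``fully off-diagonal'' part over $n_{1}+n_{2}+n_{3}=0$ with all $n_{i}\ne0$, which together with the pure-power part reassembles --- via the computation behind Zagier's triple-product formula~(\ref{3eisen}) --- into a closed expression for $I_{3}$ (a line integral plus discrete terms) built from ratios of completed zeta functions of shape $\Lambda(1/2+i(\pm\tau\pm t\pm t\pm t))$ over $\Lambda(1+2it)$-type factors. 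Equivalently one may spectrally decompose $E_{t}^{2}$ and pair with $E(z,1/2+i\tau)E_{t}$, where the continuous part is again governed by Zagier's formula and the cuspidal part by triple-product $L$-values; since only an upper bound is needed, the Eisenstein route above is cleaner.

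Next I would estimate. By Stirling the Gamma ratios simultaneously supply the decay in $t$ (in the spirit of the computation already used for $I_{1}$) and localize the ranges of the zeta-arguments, and it is this localization that produces the conductor-dropping dichotomy: for $\tau\le t^{1/3}$ all surviving $\zeta$-factors sit at height $\asymp t$, whereas for larger $\tau$ one of them effectively sits at height $\asymp\tau$, changing the bookkeeping. For the $\zeta$-factors I would use the Weyl subconvexity bound $\zeta(1/2+iT)\ll|T|^{1/6+\varepsilon}$ together with $\zeta(1+2it)^{-1}\ll\log t$; the hypothesis $\tau<t^{1-\varepsilon}$ is used precisely to keep every shift $t\pm\tau$ bounded away from $0$, so that no factor meets the pole of $\zeta$ at $1$. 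Working out the two ranges then yields $I_{3}\ll t^{-1/3+\varepsilon}$ for $\tau\le t^{1/3}$ and $I_{3}\ll\tau^{1/2}t^{-1/2+\varepsilon}$ for $t^{1/3}\le\tau\le t^{1-\varepsilon}$, the two bounds matching at $\tau\asymp t^{1/3}$, while the hook part is checked to be of strictly lower order.

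The step I expect to be the main obstacle is the bookkeeping of the regularization: isolating which power-of-$y$ terms cancel, checking that the regularized value agrees with the claimed closed form, and then controlling the resulting line integral (equivalently, the sum over the cuspidal spectrum in the alternative route) uniformly as $\tau$ approaches $t$. Keeping every estimate uniform in both $t$ and the shift $\tau$ --- the analytic conductor being of size $\asymp t^{4}$ perturbed by $\tau$ --- is the real analytic point, and it is exactly this that forces the restriction $\tau<t^{1-\varepsilon}$.
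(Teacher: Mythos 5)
Your central step does not work: you treat $\langle E_{\tau},E_{t}^{3}\rangle_{reg}$ as if it were a triple product of Eisenstein series, but it is a product of \emph{four}. After unfolding $E_{\tau}$ you are left with the zeroth Fourier coefficient of $E_{t}^{3}$, whose fully off-diagonal piece is a \emph{ternary} shifted divisor correlation $\sum_{n_{1}+n_{2}+n_{3}=0,\ n_{i}\neq 0}\eta_{t}(|n_{1}|)\eta_{t}(|n_{2}|)\eta_{t}(|n_{3}|)K_{it}K_{it}K_{it}$. Ramanujan's identity (and hence the mechanism behind Zagier's formula~(\ref{3eisen})) only resolves the \emph{binary} correlation $\sum_{n}\sigma\overline{\sigma}\,n^{-w}$ that arises for a product of three Eisenstein series; there is no analogous closed form here, and the claimed expression for $I_{3}$ as a ratio of completed zeta values $\Lambda(1/2+i(\pm\tau\pm t\pm t\pm t))$ does not exist. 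Indeed the paper's Proposition~\ref{explicitEisen} shows $I_{3}=K_{1}+K_{2}+K_{3}$ where $K_{1}$ is a genuine sum over the entire cuspidal spectrum involving central values $\Lambda(1/2,\phi_{k})\Lambda(1/2+2it,\phi_{k})\Lambda(1/2\pm i\tau-it,\phi_{k})$; if your closed form were correct this cuspidal contribution would have to vanish, and it does not. Consequently your estimation step, which relies only on Weyl subconvexity for $\zeta$, never confronts the actual main term.

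The route you mention only in passing --- spectrally decompose and pair, i.e.\ $I_{3}=\lim_{t'\to 0}\langle E_{\tau}E_{t},E_{t}E_{t+t'}\rangle_{reg}$ via the regularized Plancherel formula (Lemma~\ref{Prop_Regular_Planch}) --- is in fact the paper's proof, and it is not optional. It produces a discrete part $K_{1}$ which Stirling's formula localizes to $|t_{k}-t|\leq\tau+t^{\varepsilon}$ (so the spectral parameters are of size $t$, not small), and bounding it requires a $(1/2,1/4,1/4)$ H\"older step combined with the Jutila and Jutila--Motohashi fourth moment estimates (\ref{fourthhybrid})--(\ref{fourthhybridlong}) for $\sum|L(1/2,\phi_{k})|^{4}$ and $\sum|L(1/2-2it,\phi_{k})|^{4}$, together with the conductor-dropping large sieve bound of Lemma~\ref{meanLin} for $\sum|L(1/2-it,\phi_{k}\times E_{\tau})|^{2}$; the dichotomy at $\tau=t^{1/3}$ comes from the minimal admissible window length $G\asymp t^{1/3}$ in Lemma~\ref{fourthLemma}, not from a change in the heights of $\zeta$-arguments. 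The continuous part $K_{2}$ is where your $\zeta$-moment and subconvexity inputs genuinely enter (via the fourth moment of $\zeta$ in short intervals), but none of the tools needed for $K_{1}$ appear in your plan, so the argument as written cannot reach the stated bounds.
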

\begin{remark}
    Assume GLH and use it to bound the central value in the equations (\ref{J1bound}) (\ref{J2bound}) (\ref{K1bound}) (\ref{K2bound}), then we get
    $I_{2}\ll \frac{t_{j}^{1/2}}{t^{1/2-\varepsilon}}$ and 
$I_{3}   \ll \frac{\tau^{1/2}}{t^{1/2-\varepsilon}}$
in any range in $t_{j} < t^{1-\varepsilon}$.
Thus, we have
$I_{2} = \langle u_{j},E_{t}^{3}\rangle\ll t^{-1/2 + \varepsilon}$ and
$I_{3} = \langle E_{\tau} , E_{t}^{3} \rangle \ll t^{-1/2+\varepsilon}$
conditionally when $t_{j} \ll t^{\varepsilon}$.
\end{remark}
As a corollary,  we get two asymptotic orthogonal property of Hecke-Maass forms and Eisenstein series when the spectral parameters go to infinity in some range.
\begin{corollary}For any small $\delta > 0$ and  $t_{j} \leq t^{1-\delta}$.  We get
\[\int_{\mathbb{X}}u_{j}(z)E(z,1/2+it)^{3}\dd \mu z  = o(1)\]
    when $t_{j} \rightarrow \infty$ depend on $t$.
\end{corollary}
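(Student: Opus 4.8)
The plan is to deduce this immediately from Proposition~\ref{Maasscase}; no new analytic ingredient is required. First I would observe that $\int_{\mathbb X}u_j(z)E(z,1/2+it)^3\,\dd\mu z$ is, up to the harmless substitution $t\mapsto -t$ (which only conjugates the Eisenstein series), precisely the quantity $I_2$ of Proposition~\ref{Maasscase}, so it is enough to show $I_2=o(1)$ in the stated range. The hypotheses $t_j\to\infty$ and $t_j\le t^{1-\delta}$ force $t\ge t_j^{1/(1-\delta)}\to\infty$; I fix an $\varepsilon$ with $0<\varepsilon<\delta/2$, so that in particular $t_j<t^{1-\varepsilon}$ and Proposition~\ref{Maasscase} applies.

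I would then split according to the two regimes of Proposition~\ref{Maasscase}. When $t_j\le t^{1/3}$ it gives $I_2\ll_\varepsilon t^{-1/3+\varepsilon}=o(1)$. When $t^{1/2}\le t_j\le t^{1-\delta}$ it gives
\[
I_2\ll_\varepsilon \frac{t_j^{1/2}}{t^{1/2-\varepsilon}}\le \frac{t^{(1-\delta)/2}}{t^{1/2-\varepsilon}}=t^{-\delta/2+\varepsilon}=o(1).
\]
This already covers everything except the transition band $t^{1/3}\le t_j\le t^{1/2}$; note that the two displayed bounds coincide there at the endpoints (both equal $t^{-1/3+\varepsilon}$ at $t_j=t^{1/3}$), so one still expects the second estimate of Proposition~\ref{Maasscase} to persist throughout the band, whence $I_2\ll_\varepsilon t^{-\delta/2+\varepsilon}=o(1)$ once more. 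Combining the three cases proves the corollary, in fact with a power saving whose exponent degrades to $-\delta/2+\varepsilon$ as $t_j$ approaches $t^{1-\delta}$.

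The corollary poses essentially no obstacle, since all of the substantive work --- unfolding $\langle u_j,E_t^3\rangle$ by Zagier's formula into central values of $L$-functions of $u_j$, the attendant archimedean (stationary-phase) analysis, and the subconvexity bound --- is already contained in the proof of Proposition~\ref{Maasscase}. The one thing that needs a word is the uniformity over the entire interval $t_j\le t^{1-\delta}$, i.e.\ the transition band $t^{1/3}\le t_j\le t^{1/2}$: here one either re-runs the proof of the second estimate of Proposition~\ref{Maasscase}, or, should that be inconvenient, falls back on the convexity bound for the relevant $L$-values together with the power saving in $t$ already furnished by the archimedean factor --- either way $o(1)$ is obtained, since no particular exponent is being asked for.
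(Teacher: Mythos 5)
Your proposal is correct and is essentially the paper's (implicit) argument: the corollary is stated as an immediate consequence of Proposition~\ref{Maasscase}, exactly as you deduce it, with the choice $\varepsilon<\delta/2$ giving $o(1)$ (indeed a power saving) in every regime. Your worry about the transition band $t^{1/3}\le t_j\le t^{1/2}$ is a fair reading of the proposition as literally stated, but it is resolved by inspecting the proof: Lemma~\ref{J1} establishes the bound $t_j^{1/2}/t^{1/2-\varepsilon}$ for all $t_j\ge t^{1/3}$ (and Lemma~\ref{J2} covers $J_2$ there), so the second estimate does persist throughout the band, as you anticipated.
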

\begin{corollary}
    For any small $\delta > 0$ and $\tau \leq t^{1-\delta}$. We have
    \[\int_{\mathbb{X}}^{reg}E(z,1/2+i\tau)E(z,1/2+it)^{3}\dd \mu z = o(1)\]
    when $\tau \rightarrow \infty$ depend on $t$.
\end{corollary}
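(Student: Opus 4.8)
The plan is to read this off directly from Proposition~\ref{Eisensteincase}. First I would observe that $\int_{\mathbb{X}}^{reg}E(z,1/2+i\tau)E_{t}(z)^{3}\dd\mu z = c_{t}^{-3}I_{3}$ with $|c_{t}|=1$, so it suffices to prove $I_{3}=o(1)$ under the stated hypotheses; and since $\tau\to\infty$ while $\tau\le t^{1-\delta}$, we automatically have $t\to\infty$, so any fixed negative power of $t$ is $o(1)$. Thus the whole task reduces to checking that the bounds of Proposition~\ref{Eisensteincase} produce a negative power of $t$ throughout the range $\tau\le t^{1-\delta}$.

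Next I would fix $\varepsilon=\delta/3$ (assuming without loss of generality that $\delta<1$, so that $\varepsilon<1/3$ and $\varepsilon<\delta/2$). For $t$ large the hypothesis $\tau<t^{1-\varepsilon}$ of Proposition~\ref{Eisensteincase} then holds, since $\tau\le t^{1-\delta}<t^{1-\varepsilon}$. I would then split according to the two ranges in that proposition. When $\tau\le t^{1/3}$ it gives $I_{3}\ll t^{-1/3+\varepsilon}=t^{-1/3+\delta/3}=o(1)$. When $t^{1/3}\le\tau\le t^{1-\delta}$ it gives
\[
  I_{3}\ \ll\ \frac{\tau^{1/2}}{t^{1/2-\varepsilon}}\ \le\ \frac{t^{(1-\delta)/2}}{t^{1/2-\varepsilon}}\ =\ t^{\varepsilon-\delta/2}\ =\ t^{-\delta/6}\ =\ o(1),
\]
where I used $\tau\le t^{1-\delta}$ to absorb the factor $\tau^{1/2}$. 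Since these two ranges together exhaust $\tau\le t^{1-\delta}$ and overlap at $\tau=t^{1/3}$, this completes the argument.

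In truth there is no real obstacle at this step: all of the analytic work is already contained in Proposition~\ref{Eisensteincase}, which handles the regularized pairing $\langle E_{\tau},E_{t}^{3}\rangle_{reg}$ by unfolding it into central values of the relevant $L$-functions and applying the moment and subconvexity inputs recorded there; the corollary simply repackages that estimate as an asymptotic. The only point I would emphasize is the role of the gap $\delta$: it is precisely what forces $\tau^{1/2}/t^{1/2-\varepsilon}$ to decay, so the statement genuinely degrades as $\tau$ approaches the conductor-dropping scale $t^{1-o(1)}$, and this method does not yield $o(1)$ uniformly all the way up to $\tau\asymp t$.
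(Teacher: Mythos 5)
Your proposal is correct and matches the paper's (implicit) argument: the corollary is stated as a direct consequence of Proposition \ref{Eisensteincase}, and your reduction — removing the unimodular factor $c_{t}^{3}$, choosing $\varepsilon<\delta/2$, and checking that both ranges $\tau\le t^{1/3}$ and $t^{1/3}\le\tau\le t^{1-\delta}$ yield a negative power of $t$ — is exactly what is intended. No gaps.
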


\subsection{The Joint cubic moment of Hecke-Maass cusp forms and Eisenstein series}
Finally the main result is the asymptotic vanishing property for joint cubic moment of Eisenstein series and Hecke-Maass cusp forms
\[\int_{\mathbb{X}}\psi(z){E_{t}^{\star}}(z)^{2}g(z)\dd\mu z.\]
In fact, we normalize the Eisenstein series by dividing the mass $\sqrt{2\log t}$, we get asymptotic vanishing like Maass form case. The highlight in this result is we can cover all the range of $t$ and $t_{g}$ in Corollary \ref{QUE}.

\begin{theorem}\label{JointEEg}
   For any $\varepsilon, \varepsilon^{\prime} > 0, 0< \eta < 1$ and $\delta$ in Lemma \ref{MichelVenkatesh}, when $t_{g}\geq 2t^{\varepsilon^{\prime}}$ we get
 \begin{equation}
    \begin{aligned}
  \int_{\mathbb{X}}\psi(z){E_{t}^{\star}}(z)^{2}g(z)\dd\mu z      \ll W(t,t_{g}) = \left\{	
			\begin{array}{lr}	
   t_{g}^{-100} \quad & t_{g} \geq 2t +t_{g}^{\varepsilon},\\
		\frac{t^{\varepsilon}}{t^{1/9+\delta/3}}\quad &      2t - t_{g}^{\varepsilon} \leq t_{g} \leq 2t + t_{g}^{\varepsilon}, \\
  
 \frac{t^{\varepsilon}}{t^{1/9+\delta/3}(1+ |2t -t_{g}|)^{1/6+\delta/3}} \quad &      (2-\eta)t \leq t_{g} \leq 2t - t_{g}^{\varepsilon}, \\
  
   \frac{t^{\varepsilon}}{t_{g}^{1/6}t^{1/6}} \quad &      t^{2/3 + 
 \varepsilon} \leq t_{g} \leq (2-\eta)t,    \\
 
   \frac{t^{\varepsilon}}{t_{g}^{1/6}t^{1/12}}  \quad &       t^{1/3}\leq t_{g} \leq t^{2/3 + 
 \varepsilon}, \\

     \frac{t^{\varepsilon} t_{g}^{1/12}}{t^{1/6}} \quad &     2t^{\varepsilon^{\prime}}\leq t_{g} \leq t^{1/3},    \\
			\end{array}
			\right.
    \end{aligned}
\end{equation}
and  when $t_{g}\leq 2t^{\varepsilon^{\prime}}$ we have
\begin{equation}
    \begin{aligned}
\int_{\mathbb{X}}\psi(z){E_{t}^{\star}}(z)^{2}g(z)\dd\mu z  = \int_{\mathbb{X}}\psi(z)g(z)\dd \mu z&  \cdot[\frac{6}{\pi}(\log tt_{g} + \frac{L^{\prime}}{L}(1,\sym^{2}g) )+ \mathcal{O}_{\psi,\varepsilon}(\log^{2/3+\varepsilon}t) ]\\  
&   + \mathcal{O}(t_{g}^{-A} + t^{-1/6+\varepsilon}). 
    \end{aligned}
\end{equation}
\end{theorem}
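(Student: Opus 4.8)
The plan is to expand ${E_{t}^{\star}}(z)^{2}$ spectrally by Zagier's regularized Rankin--Selberg theory and integrate the result against $\psi g$, which is compactly supported and smooth. A useful preliminary observation is that $E_{t}^{\star}$ is real-valued, so ${E_{t}^{\star}}(z)^{2}=|E(z,1/2+it)|^{2}=E(z,1/2+it)E(z,1/2-it)$; its constant term is then a combination of $y^{1}$ and $y^{1\pm 2it}$, i.e.\ all exponents sit on $\Re s=1$, the line through the pole of $E(z,s)$ --- this is precisely the degeneracy that will produce a logarithmic main term. Applying the regularized Plancherel formula (Lemma~\ref{Prop_Regular_Planch}) to the pair $({E_{t}^{\star}}^{2},\psi g)$ --- with no $\mathcal{E}_{\psi g}$ term, since $\psi g$ is compactly supported --- gives
\begin{align*}
\int_{\mathbb{X}}\psi(z)\,{E_{t}^{\star}}(z)^{2}g(z)\,\dd\mu z
={}& \frac{\langle 1,{E_{t}^{\star}}^{2}\rangle_{reg}}{\vol(\mathbb{X})}\int_{\mathbb{X}}\psi g\,\dd\mu z
+\langle \mathcal{E}_{{E_{t}^{\star}}^{2}},\psi g\rangle \\
&{}+\sum_{j}\langle u_{j},{E_{t}^{\star}}^{2}\rangle_{reg}\,\langle\psi g,u_{j}\rangle
+\frac{1}{4\pi}\int_{\mathbb{R}}\langle E_{1/2+i\tau},{E_{t}^{\star}}^{2}\rangle_{reg}\,\langle\psi g,E_{1/2+i\tau}\rangle\,\dd\tau ,
\end{align*}
and I treat the four groups of terms separately.

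The first two terms furnish the explicit main term. The regularized mass equals $\langle 1,{E_{t}^{\star}}^{2}\rangle_{reg}=-\tfrac{\varphi'}{\varphi}(1/2+it)=2\log t+O(\log^{2/3+\varepsilon}t)$ by Maass--Selberg (the error being $4\Re\tfrac{\zeta'}{\zeta}(1+2it)$, bounded by Vinogradov--Korobov), which gives the $\tfrac{6}{\pi}\log t\int\psi g$ part since $\vol(\mathbb{X})=\pi/3$; the second term $\langle\mathcal{E}_{{E_{t}^{\star}}^{2}},\psi g\rangle$ is computed from~(\ref{Phi_def}) --- because of the pole of $E(z,s)$ at $s=1$ it involves the regularized derivative $\partial_{s}E(z,s)|_{s=1}$, together with the pieces $E(z,1\pm 2it)$ coming from the $y^{1\pm 2it}$-parts of the constant term --- and, on pairing with $\psi g$ and bringing in the contribution of $u_{j}=g$ in the cuspidal sum evaluated via Zagier's unfolding and the equidistribution asymptotic $\int_{\mathbb{X}}\psi g^{2}\,\dd\mu z=\tfrac{3}{\pi}\int\psi + o(1)$, reassembles into the remaining $\tfrac{6}{\pi}\big(\log t_{g}+\tfrac{L'}{L}(1,\sym^{2}g)\big)\int\psi g$ with errors $O_{\psi,\varepsilon}(\log^{2/3+\varepsilon}t)$ and $O(t^{-1/6+\varepsilon}+t_{g}^{-A})$. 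When $t_{g}\ge 2t^{\varepsilon'}$, all of these are $\ll_{A}t_{g}^{-A}$: $\int_{\mathbb{X}}\psi g\,\dd\mu z$ is a spectral coefficient of the fixed function $\psi$, and the relevant pieces of the other terms are $g$-twisted integrals that decay faster than any power of $t_{g}$; so only the last two sums remain in that regime.

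The heart of the argument is bounding the regularized triple products $\langle u_{j},{E_{t}^{\star}}^{2}\rangle_{reg}$ and $\langle E_{1/2+i\tau},{E_{t}^{\star}}^{2}\rangle_{reg}$ --- the analogues, for ${E_{t}^{\star}}^{2}=|E_{t}|^{2}$ in place of $E_{t}^{3}$, of Propositions~\ref{Maasscase} and~\ref{Eisensteincase}. Unfolding one Eisenstein factor writes the first as an archimedean integral of two $K$-Bessel functions against a Rankin--Selberg series, i.e.\ (up to $\Gamma$-factors in $t$ and $t_{j}$) a product $L(1/2,u_{j})L(1/2+2it,u_{j})$ over $\zeta(1\pm 2it)$, while Zagier's formula~(\ref{3eisen}) writes the second as a product of completed $\zeta$-values at the six points $\tfrac12\pm it\mp it\pm\tfrac{i\tau}{2}$. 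Stirling converts the $\Gamma$-factors into an explicit size profile: once $t_{j}$ (resp.\ $\tau$) exceeds $2t$ the exponential factors no longer cancel and one gets genuine exponential decay $e^{-c(t_{j}-2t)}$ --- the source of the $t_{g}^{-100}$ regime --- while for $t_{j}<2t$ the profile is polynomial with the analytic conductor of $L(1/2+2it,u_{j})$ dropping from $\asymp t^{2}$ down to $\asymp t(1+|2t-t_{j}|)$ as $t_{j}\to 2t$, which is what carries the factors $(1+|2t-t_{g}|)$ and the exponents $1/9,1/12,1/6$ in $W(t,t_{g})$. On the other side, $\langle\psi g,u_{j}\rangle=\langle g,\psi u_{j}\rangle$ and $\langle\psi g,E_{1/2+i\tau}\rangle$ are negligible unless $t_{j},\tau$ are within $O(t^{\varepsilon})$ of $t_{g}$ (expand $\psi$ spectrally: its rapidly decaying coefficients force a near-triangle condition between $t_{j},\tau$ and $t_{g}$), and by Bessel $\sum_{j}|\langle\psi g,u_{j}\rangle|^{2}\ll\|\psi g\|_{2}^{2}\ll_{\psi}1$, with the analogue for the continuous part. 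Inserting the localisation $t_{j}\approx\tau\approx t_{g}$, applying Cauchy--Schwarz against this $\ell^{2}$-bound, and estimating $\sum_{t_{j}\approx t_{g}}|L(1/2,u_{j})|^{2}|L(1/2+2it,u_{j})|^{2}$ by a spectral large-sieve/fourth-moment bound combined with the subconvex estimate of Lemma~\ref{MichelVenkatesh} for $L(1/2+2it,g)$ (uniformly in $t,t_{g}$, using conductor dropping when $t_{g}$ is near $2t$), and handling the continuous part by the convexity bound for $\zeta$ on $\Re s=\tfrac12$, produces $W(t,t_{g})$ in each of its six ranges.

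The main obstacle is the uniform analysis of $\langle u_{j},{E_{t}^{\star}}^{2}\rangle_{reg}$: one must control both its archimedean profile through the degeneracy $t_{j}=2t$ --- a stationary-phase analysis of the Mellin--Barnes $K$-Bessel integral whose transition region near $t_{j}=2t$ dictates the $(1+|2t-t_{g}|)$-weights and the exponents of $W$ --- and the accompanying moment estimate for $|L(1/2,u_{j})|^{2}|L(1/2+2it,u_{j})|^{2}$ over the short interval $t_{j}\approx t_{g}$, which must be strong enough (exploiting conductor dropping when $t_{g}$ is near $2t$) that, once paired with the $\ell^{2}$-bound for the coefficients of $\psi g$, the whole spectral sum beats $1$ in every range of $t_{g}$. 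A secondary difficulty is the regularization bookkeeping in the main term --- the subtractions defining $\langle\cdot,\cdot\rangle_{reg}$ and $\mathcal{E}_{{E_{t}^{\star}}^{2}}$ must be taken consistently so that the polar contributions (from $\Res_{s=1}E(z,s)=3/\pi$ and, in the continuous integral, from $\tau=0$) cancel and the constants $\tfrac6\pi$ together with the $\log(tt_{g})+\tfrac{L'}{L}(1,\sym^{2}g)$ shape come out correctly. Finally, the bounded range $t_{g}\le 2t^{\varepsilon'}$ follows along the lines of the proof of Theorem~\ref{cubicmoment}, with the roles of $I_{2},I_{3}$ played by the $|E_{t}|^{2}$-analogues of Propositions~\ref{Maasscase} and~\ref{Eisensteincase}.
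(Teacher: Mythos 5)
Your decomposition is the ``transpose'' of the paper's: you apply one regularized Plancherel to the pair $({E_t^{\star}}^{2},\psi g)$, whereas the paper first expands $\psi$ spectrally (so the outer spectral parameter $t_k$ is $\ll t^{o(1)}$) and then applies a second Plancherel to each $\langle \phi_k g, E_t^2\rangle$, localizing the inner parameter near $t_g$. For the regime $t_g\ge 2t^{\varepsilon'}$ your route is viable and uses exactly the same inputs in the same way: the Stirling localization $t_j\approx t_g$, Cauchy--Schwarz against the Bessel bound for the coefficients of $\psi g$ (the paper uses $\sum_j|\langle u_j,g\phi_k\rangle|^2\le\|g\phi_k\|_2^2\ll t^{\varepsilon}$ instead of $\|\psi g\|_2^2\ll_\psi 1$), and the moment $\sum_{|t_j-t_g|\le t^{\varepsilon}}L(1/2,u_j)^2|L(1/2+2it,u_j)|^2$ treated by Jutila--Motohashi fourth moments, or by H\"older with the cubic moment and Lemma \ref{MichelVenkatesh} in the conductor-dropping range $t_g\approx 2t$. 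That part I would accept as a correct, slightly leaner organization.

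The gap is in the main term for $t_g\le 2t^{\varepsilon'}$. First, Lemma \ref{Prop_Regular_Planch} does not apply to $F={E_t^{\star}}^{2}$: its constant term carries the exponent $\alpha=1$, which is excluded by hypothesis, so the four quantities in your display are not separately well defined; one must work with $E_{1/2+it}E_{1/2-it+\eta}$ and let $\eta\to0$ \emph{after} assembling all terms, because the poles at $\eta=0$ cancel only across terms (this is exactly the paper's $\lim_{t'\to0}$ device). You acknowledge the degeneracy, but your term-by-term evaluation presupposes a clean separation that does not exist; for instance your claims that $\langle 1,{E_t^{\star}}^{2}\rangle_{reg}/\vol(\mathbb{X})$ contributes $\frac{6}{\pi}\log t\int\psi g$ \emph{and} that $\langle\mathcal{E}_{{E_t^{\star}}^{2}},\psi g\rangle$ contributes another main term cannot both be taken at face value without tracking the cancelling polar parts. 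Second, and more seriously, your stated source of the $g$-dependent main term is wrong. The $u_j=g$ term of your cuspidal sum is $\langle g,{E_t^{\star}}^{2}\rangle\int_{\mathbb{X}}\psi g^2\,\dd\mu z$, and by Rankin--Selberg $\langle g,{E_t^{\star}}^{2}\rangle\asymp L(1/2,g)|L(1/2+2it,g)|\,t_g^{-1/2}t^{-1/2}|\zeta(1+2it)|^{-2}\ll t^{-1/6+\varepsilon}$; it is an error term and cannot produce $\frac{6}{\pi}\big(\log t_g+\frac{L'}{L}(1,\sym^2 g)\big)\int\psi g$, and invoking QUE for $\int\psi g^2$ is both unnecessary and unhelpful here. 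Meanwhile the direct evaluation of $\langle\mathcal{E}_{{E_t^{\star}}^{2}},\psi g\rangle$ via $E(z,1\pm\eta)=\pm\frac{3/\pi}{\eta}+C(z)+O(\eta)$ yields $\frac{6}{\pi}\log t\int\psi g+2\langle C,\psi g\rangle$ with $C$ the Kronecker limit function --- an expression in which $\log t_g$ and $\frac{L'}{L}(1,\sym^2 g)$ are invisible. In the paper these terms arise from the coefficient $\langle\psi,g\rangle$ in the expansion of $\psi$ multiplied by the degenerate part of $\langle g^2,{E_t^{\star}}^{2}\rangle$, where the Laurent expansion of $\xi(1\pm\eta)\Lambda(1\pm\eta,\sym^2 g)/\xi(2\pm2\eta)$ at $\eta=0$ produces $\frac{\Lambda'}{\Lambda}(1,\sym^2 g)=\log t_g+\frac{L'}{L}(1,\sym^2 g)+O(1)$ by Stirling (Proposition \ref{regularizedpart}). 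To recover the theorem's formula from your decomposition you would have to re-expand $\psi g$ spectrally inside the pairing with the degenerate Eisenstein part and isolate the $g$-coefficient --- i.e.\ redo the paper's computation --- so this step is missing, not merely compressed.
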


\begin{remark}
    The estimate $\mathcal{O}(\log^{2}t_{g})$ of $\frac{L^{\prime}}{L}(1,\sym^{2}g) $ is standard from \cite[\S5 Proposition 5.7]{iwaniec2004analytic} and the standard zero free region and nonexistence of Landau-Siegel zeros of $L(s,\sym^{2}g)$ (\cite{Hoffstein1994CoefficientOM}).   
\end{remark}
Then we get the following corollary. If we write $\widetilde{E}_{t}(z) = \frac{E^{\star}_{t}(z)}{\sqrt{2\log t}}$ (this means the real Eisenstein series with mass $1$.) Then 
\begin{corollary}\label{QUE}
       We get
    \[ \int_{\mathbb{X}}\psi(z)\widetilde{E}_{t}(z)^{2}g(z)\dd\mu z      = \frac{3}{\pi}\int_{\mathbb{X}}\psi(z)g(z)\dd \mu z + \mathcal{O}_{\psi,\varepsilon}(t_{g}^{-A}\log^{-1/3+\varepsilon}t + W(t,t_{g})) \]
    when $\min\{t,t_{g}\} \rightarrow \infty$. Because $\int_{\mathbb{X}}\psi(z)g(z)\dd \mu z \ll_{\psi} t_{g}^{-A} $ for any large  $A$.  Thus we get 
    \[\int_{\mathbb{X}}\psi(z)\widetilde{E}_{t}(z)^{2}g(z)\dd\mu z \sim 0 ,\quad\quad \min\{t,t_{g}\} \rightarrow \infty.\]

\end{corollary}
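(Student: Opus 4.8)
The plan is to deduce Corollary \ref{QUE} directly from Theorem \ref{JointEEg}. Since $\widetilde{E}_t(z)^2 = E_t^\star(z)^2/(2\log t)$, the quantity in question equals $\frac{1}{2\log t}\int_{\mathbb{X}}\psi(z)\,E_t^\star(z)^2\,g(z)\,\dd\mu z$; I would substitute the two-range estimate of Theorem \ref{JointEEg} and divide by $2\log t$. The only external input, used repeatedly, is the asymptotic orthogonality of a fixed smooth test function to high-energy Maass forms, $\int_{\mathbb{X}}\psi(z)g(z)\,\dd\mu z\ll_{\psi,A}t_g^{-A}$ for every $A>0$ (standard, e.g.\ via the spectral expansion of $\psi$ and repeated application of $\Delta$; it is the bound already recorded in the statement of the corollary).

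In the regime $t_g\leq 2t^{\varepsilon'}$, dividing the asymptotic formula of Theorem \ref{JointEEg} by $2\log t$ and writing $\frac{6}{\pi}\cdot\frac{\log(tt_g)}{2\log t}=\frac{3}{\pi}+\frac{3}{\pi}\cdot\frac{\log t_g}{\log t}$ isolates the main term $\frac{3}{\pi}\int_{\mathbb{X}}\psi g\,\dd\mu z$. The leftover contributions are $\frac{3}{\pi}\cdot\frac{\log t_g}{\log t}\int\psi g$, the term $\frac{3}{\pi\log t}\cdot\frac{L'}{L}(1,\sym^{2}g)\int\psi g$, the piece $\mathcal{O}(\log^{-1/3+\varepsilon}t)\int\psi g$ coming from dividing the $\mathcal{O}(\log^{2/3+\varepsilon}t)$ error of the theorem, and $\frac{1}{2\log t}\mathcal{O}(t_g^{-A}+t^{-1/6+\varepsilon})$. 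Using $\frac{L'}{L}(1,\sym^{2}g)\ll\log^{2}t_g$ (the Remark after Theorem \ref{JointEEg}) together with $\int\psi g\ll_{\psi,A}t_g^{-A}$ for $A$ arbitrarily large, each of these is $\ll_{\psi,A,\varepsilon}t_g^{-A}+t^{-1/6+\varepsilon}$, hence $o(1)$; the sharper shape $t_g^{-A}\log^{-1/3+\varepsilon}t$ recorded in the corollary is precisely the contribution of the $\mathcal{O}(\log^{2/3+\varepsilon}t)$ term after dividing by $2\log t$ and multiplying by $\int\psi g$.

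In the regime $t_g\geq 2t^{\varepsilon'}$, Theorem \ref{JointEEg} gives $\int\psi\,E_t^\star(z)^2\,g\,\dd\mu z\ll W(t,t_g)$, hence $\int\psi\,\widetilde{E}_t(z)^2\,g\,\dd\mu z\ll W(t,t_g)/\log t\ll W(t,t_g)$; adding back the negligible $\frac{3}{\pi}\int\psi g\ll t_g^{-A}$ reproduces the form stated in the corollary. To upgrade this to $o(1)$ one checks that $W(t,t_g)\to 0$ in each of the six subranges: after bounding the cross variable $t_g$ against the appropriate power of $t$, the net exponent of $t$ (resp.\ of $t_g$) in $W$ is negative for $\varepsilon$ small enough, and in every subrange the hypothesis $\min\{t,t_g\}\to\infty$ forces the controlling variable to infinity (e.g.\ $W=t_g^{-100}$ with $t_g\to\infty$ when $t_g\geq 2t+t_g^{\varepsilon}$, and $W\leq t^{\varepsilon-5/36}$ with $t\to\infty$ when $2t^{\varepsilon'}\leq t_g\leq t^{1/3}$). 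Combined with $t_g^{-A}\to 0$ this shows that every term on the right-hand side of the corollary tends to $0$, so $\int\psi\,\widetilde{E}_t(z)^2\,g\,\dd\mu z\sim 0$.

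The substantive work sits entirely inside Theorem \ref{JointEEg} — the regularized-Plancherel decomposition of $E_t^\star(z)^2$, the resulting second moments of triple-product $L$-values, and the conductor-dropping analysis producing $W(t,t_g)$ — which we are free to assume here; I expect that to be the main obstacle, whereas the passage to Corollary \ref{QUE} is the elementary bookkeeping above. The one spot in that bookkeeping that needs a little care is the interaction of $t$ and $t_g$ when $t_g\asymp t^{\varepsilon'}$, where $\log^{2}t_g$ can be as large as $(\varepsilon')^{2}\log^{2}t$; this is harmless because that factor always appears multiplied by $\int\psi g\ll_{\psi,A}t_g^{-A}$ with $A$ unconstrained, and it is this same super-polynomial decay that simultaneously degrades the nominal main term $\frac{3}{\pi}\int\psi g$ to a genuine $o(1)$.
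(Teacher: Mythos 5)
Your proposal is correct and follows exactly the route the paper intends: the corollary is stated as an immediate consequence of Theorem \ref{JointEEg} after dividing by $2\log t$, splitting $\frac{6}{\pi}\cdot\frac{\log(tt_g)}{2\log t}=\frac{3}{\pi}+\frac{3}{\pi}\cdot\frac{\log t_g}{\log t}$, and absorbing all residual terms via $\int_{\mathbb{X}}\psi g\,\dd\mu z\ll_{\psi,A}t_g^{-A}$ together with the bound $\frac{L'}{L}(1,\sym^2 g)\ll\log^2 t_g$ and the check that $W(t,t_g)\to 0$ in each subrange. Your bookkeeping (including the remark about $\log^2 t_g$ versus $t_g^{-A}$ near $t_g\asymp t^{\varepsilon'}$) matches the paper's implicit argument.
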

\begin{remark}Essentially, the extreme case is based on QUE of Eisenstein series.
\end{remark}
\subsection{The Joint cubic moment of Hecke-Maass cusp forms}
Finally we consider the joint cubic moments of Hecke-Maass cusp forms
\[\int_{\mathbb{X}}\psi(z)f^{2}(z)g(z)\dd\mu z.\]
 As a corollary the Theorem  \ref{Jointffg}  improves the bound in Huang \cite{huang2024cubic}  when $f=g$. In fact, we give the same strength as Watson's bound $\mathcal{O}(t_{f}^{-1/6+\varepsilon})$ when $\psi(z) \equiv 1$.
\begin{theorem}\label{Jointffg}
    For any $\theta < 2/3$. And consider the range $|t_{f} - t_{g}|\leq t_{f}^{\theta}$, we get
    \[\int_{\mathbb{X}}\psi(z)f^{2}(z)g(z)\dd\mu z  = o(1), \quad t_{f}\rightarrow \infty.\]
    In fact, for any $\varepsilon > 0$, we have
    \begin{equation*}
		\begin{aligned}
		\int_{\mathbb{X}}\psi(z)f^{2}(z)g(z)\dd\mu z \ll_{\psi,\varepsilon} \left\{	
			\begin{array}{lr}		
				\frac{1}{t_{f}^{1/6-\varepsilon}} \quad &\theta \leq 1/3\\
				\ \frac{t_{f}^{\theta/2}}{t_{f}^{1/3-\varepsilon}}\quad  &\theta > 1/3.\\
			\end{array}
			\right.
		\end{aligned}			
	\end{equation*}
\end{theorem}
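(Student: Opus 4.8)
The plan is to run the same spectral strategy used for the Eisenstein cubic moment (Theorem~\ref{cubicmoment}). First I would apply the Plancherel formula to $\psi$ and write
\[\int_{\mathbb X}\psi f^2 g\,\dd\mu z=\frac{\langle\psi,1\rangle}{\vol\mathbb X}\int_{\mathbb X}f^2 g\,\dd\mu z+\sum_{j}\langle\psi,u_j\rangle\int_{\mathbb X}u_j f^2 g\,\dd\mu z+\frac{1}{4\pi}\int_{\mathbb R}\langle\psi,E_{1/2+ir}\rangle\Big(\int_{\mathbb X}E_{-1/2+ir}f^2 g\,\dd\mu z\Big)\dd r,\]
and since $\psi$ is smooth and compactly supported, repeated integration by parts against $\Delta$ shows that $\langle\psi,u_j\rangle$ and $\langle\psi,E_{1/2+ir}\rangle$ decay faster than any power of $t_j$ and $1+|r|$; thus up to $O_{\psi,A}(t_f^{-A})$ only $t_j,|r|\le t_f^{\varepsilon}$ survive, and it suffices to bound the periods $\int_{\mathbb X}h\cdot f^2 g\,\dd\mu z$ for $h$ running over $1$, the finitely many Hecke--Maass forms of bounded parameter, and $E_{\pm1/2+ir}$ with $|r|$ bounded. (This is the analogue of the reduction to $I_1,\dots,I_4$ in the Eisenstein case; here no regularization is needed, because $\psi$ is compactly supported.)

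For the first period I would invoke Watson's triple product formula, $|\langle f^2,g\rangle|^2=\tfrac18\,\Lambda(1/2,\Sym^2 f\times g)\,\Lambda(1/2,g)\big/\big(\Lambda(1,\Sym^2 f)^2\,\Lambda(1,\Sym^2 g)\big)$. In the range $|t_f-t_g|\le t_f^{\theta}$ one has $t_g\asymp t_f=:T$ and $|2T-t_g|\asymp T$, so a Stirling analysis of the archimedean factors gives $|\langle f^2,g\rangle|^2\asymp T^{-2+o(1)}L(1/2,\Sym^2 f\times g)\,L(1/2,g)$. Bounding $L(1/2,g)$ by the Weyl exponent $T^{1/3+\varepsilon}$ gives already the convexity-strength estimate $\int_{\mathbb X}f^2 g\ll T^{-1/12+\varepsilon}$, and feeding in the subconvex bound of Lemma~\ref{MichelVenkatesh} for the $\GL(3)\times\GL(2)$ value $L(1/2,\Sym^2 f\times g)$ (its $\delta$-saving over the convexity exponent $T^{3/2}$) upgrades this to the Watson-strength $\int_{\mathbb X}f^2 g\ll T^{-1/6+\varepsilon}$. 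For the periods with $h=u_k$ (fixed) or $h=E_{\pm1/2+ir}$, I would spectrally expand $f^2$ a second time,
\[\int_{\mathbb X}h f^2 g\,\dd\mu z=\sum_{j}\langle f^2,u_j\rangle\,\langle hg,u_j\rangle+(\text{Eisenstein part}),\]
and use Stirling on the archimedean factors of the two triple products $\langle f^2,u_j\rangle$ and $\langle hg,u_j\rangle$ (Watson again) to see that the $j$-sum is concentrated, up to $O(T^{-A})$, on $t_j$ in an interval of length $\ll\log T$ about $t_g$, each term being of size $\ll T^{-3/2+o(1)}\big(L(1/2,\Sym^2 f\times u_j)\,L(1/2,u_j)\,L(1/2,g\times u_j\times h)\big)^{1/2}$. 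A triangle-inequality estimate here is lossy (it gives only $T^{o(1)}$), so instead I would not take square roots: recognising $\sum_j\langle f^2,u_j\rangle\langle u_j,\overline{hg}\rangle$ as, up to the now-negligible continuous and residual parts, the period $\int_{\mathbb X}f^2\,h\,g$, I would attack it by the Rankin--Selberg/Kuznetsov method — opening one Eisenstein series produced by the decomposition and unfolding — which turns it into a shifted convolution sum in the Hecke eigenvalues of $f$ and $g$ whose oscillatory weight has effective conductor $\asymp|t_f^2-t_g^2|\asymp t_f^{1+\theta}$; estimating that sum by stationary phase together with the spectral large sieve, the Weyl bound for $L(1/2,\cdot)$ on $\GL(2)$, and Lemma~\ref{MichelVenkatesh}, produces the factor $t_f^{\theta/2}$ and the shape of $W(t_f,t_g)$. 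Ki's sharp $L^4$-bound $\|f\|_4\ll t_f^{\varepsilon}$, i.e. $\sum_j|\langle f^2,u_j\rangle|^2\ll t_f^{\varepsilon}$, is what lets one control the $j$-sum without paying for the full $\log T\cdot T$ count of terms (this is the source of the improvement over \cite{huang2024cubic} when $f=g$).

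Collecting the pieces, the bound is the maximum of the diagonal contribution $t_f^{-1/6+\varepsilon}$ and the shifted-convolution contribution $t_f^{\theta/2-1/3+\varepsilon}$; these coincide at $\theta=1/3$, so for $\theta\le 1/3$ the diagonal dominates and one gets $t_f^{-1/6+\varepsilon}$, while for $1/3<\theta<2/3$ one gets $t_f^{\theta/2-1/3+\varepsilon}$, which is $o(1)$ precisely because $\theta<2/3$. I expect the main obstacle to be the estimate of the shifted convolution sum in the range where $|t_f-t_g|$ is a small-but-positive power of $t_f$ (the $\theta>1/3$ regime): there the oscillation coming from the two high but nearly equal spectral parameters $t_f,t_g$ must be resolved by a careful stationary-phase analysis, and one cannot simply fall back on the extreme cases $t_f=t_g$ or $t_g\ll t_f^{\varepsilon}$. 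A secondary technical point is the Stirling analysis of the archimedean factors in Watson's formula near the conductor-dropping locus $t_g\approx t_f$, which has to be done precisely enough to pin down the normalising power $t_f^{-2}$ and the $\log T$ localisation window.
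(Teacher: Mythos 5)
Your overall skeleton (Plancherel for $\psi$, truncation to $t_k,|r|\ll t_f^{o(1)}$, Watson for the constant term, a second spectral expansion of $f^2$ for the remaining periods) matches the paper's, but two of your key steps do not survive scrutiny. First, for $\langle f^2,g\rangle$ you propose to bound $L(1/2,\Sym^2 f\times g)$ by ``the subconvex bound of Lemma~\ref{MichelVenkatesh}''; that lemma is a subconvexity bound for $\GL(2)$ $L$-functions $L(1/2+2it,\phi_k)$ and says nothing about a $\GL(3)\times\GL(2)$ central value, and no such subconvexity input is available here. Even granting a $\delta$-saving over the convexity exponent $t_f^{3/2}$, you would land at $t_f^{-1/12-\delta/2+\varepsilon}$, not the claimed $t_f^{-1/6+\varepsilon}$: the exponent $1/6$ requires $L(1/2,\Sym^2 f\times g)\ll t_f^{4/3+\varepsilon}$, which the paper obtains from the first-moment estimate of Lemma~\ref{moment_L_functions_6} over a window of length $\max(t_f^{1/3},|t_f-t_g|)$ together with nonnegativity of the central value; this is also what produces the $\theta$-dependence $t_f^{\theta/2-1/3}$ for $\theta>1/3$.

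Second, for the periods $\langle u_k, f^2g\rangle$ and $\langle E_{t},f^2g\rangle$ you correctly sense that a naive term-by-term bound is lossy, but your proposed remedy --- unfolding to a shifted convolution sum of conductor $\asymp t_f^{1+\theta}$ and resolving it by stationary phase plus the large sieve --- is not carried out and is essentially a separate research problem; nothing in the paper's toolkit executes it. The paper's actual resolution is much more elementary and is the point of Remark~\ref{optimal}: apply Cauchy--Schwarz so that one factor is the \emph{full} Bessel sum $\sum_j|\langle u_kg,\phi_j\rangle|^2\le\|u_kg\|_2^2\ll\|u_k\|_\infty^2\ll t^{\varepsilon}$ (using the sup-norm bound for the \emph{low}-eigenvalue form $u_k$, not Ki's $L^4$ bound for $f$), while the other factor $\sum_{|t_j-t_g|\ll t^{\varepsilon}}|\langle f^2,\phi_j\rangle|^2$ is controlled by Watson, the Weyl bound for $L(1/2,\phi_j)$, and again Lemma~\ref{moment_L_functions_6} over the window $|t_j-t_f|\le t_f^{\theta}+t_f^{\varepsilon}$. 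Your arrangement of Cauchy--Schwarz (putting $\sum_j|\langle f^2,u_j\rangle|^2$ on one side) would leave you with the degree-six central values $L(1/2,u_k\times g\times u_j)$ near the conductor-dropping point on the other side, for which no adequate moment bound is cited --- which is precisely the obstruction the paper's ordering of the two factors is designed to avoid. As written, the proposal therefore has a genuine gap at both of its quantitative steps, even though the final exponents you quote are the correct ones.
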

\begin{remark}
    We have the same bound for
    \[\quad \int_{\mathbb{X}}\psi(z)f^{2}(z)E_{t}(z)\dd\mu z.\]
\end{remark}
\begin{remark}\label{optimal} We use a elementary trick based on Bessel inequality and sup-norm bound of Hecke-Maass forms to avoid the central value of the triple $L$-functions in \cite{huang2024cubic}. The error term there is $\mathcal{O}(t_{f}^{-1/12+\varepsilon})$ since wasting the Weyl bound of $\GL(2)$ $L$-functions. 
\end{remark}

We will reduce the cubic moments by Plancherel formula(spectral decomposition) to the mixed moments of $\GL(2)$ $L$-functions and high degrees $L$-functions which are conductor dropping. We mainly deal with them by using H\"older inequality and various estimates especially hybrid fourth moments of $L$-functions from Jutila \cite{Jutila+2001+167+178} and Jutila-Motohashi \cite{Jutila2005UniformBF}.\par
\medskip
\textbf{Structure of this paper.}
The rest of this paper is organized as follows.
 In \S \ref{sec:2}, we give the theory of automorphic forms and $L$-functions especially the moments of $\GL(2)$ $L$-functions. In the end we prove a spectral large sieve type estimate for the conductor dropping case to cover a convexity bound. In \S\ref{sec:3} we give triple product formulas come from Rankin-Selberg method or Zagier's regularized inner product theory and Watson's work. In \S \ref{sec:4}, we prove Theorem \ref{Maasscase} and Theorem \ref{Eisensteincase} by using the theory of $L$-functions.
In \S \ref{sec:5}, we prove Theorem \ref{cubicmoment}.
  In \S \ref{sec:6}, we prove Theorem \ref{JointEEg} when $t_{g} \geq 2t^{\varepsilon^{\prime}}$. In this section we will give a power saving error term. In \S \ref{sec:7} we prove the Theorem \ref{JointEEg} when $t_{g}\leq 2t^{\varepsilon^{\prime}}$ by carefully calculating the regularized part. In \S \ref{sec:8}, we prove Theorem \ref{Jointffg} based on the previous work \cite{hua2024jointvaluedistributionheckemaass} and show an optimal trick there.

\section{\label{sec:2}Preliminaries}
\subsection{Automorphic forms}Let $\{\phi_{k}\}_{k\geq 1}$ be an orthonormal basis of Hecke-Maass cusp forms for $\SL(2,\mathbb{Z})$. We always assume all $\phi_{k}$ are real and normalized by $\int_{\mathbb{X}}\phi_{k}^{2}\dd\mu z = 1$. Denote the spectral parameter of $\phi_{k}$ by $t_{k}$ and the Fourier coefficients(Hecke eigenvalues) $\lambda_{k}(n)$. Also we sometimes write the spectral parameter of a Hecke-Maass form $f$ by $t_{f}$. For a Hecke-Maass form $\phi_{k}$, we have Fourier expansion
\[\phi_{k}(z) = 2\sqrt{y}\rho_{k}(1)\sum\limits_{n \neq 0}\lambda_{k}(n)K_{it_{k}}(2\pi|n|y)e(nx),\]
where
\[|\rho_{k}(1)|^2 = \frac{\cosh\pi t_{k}}{2 L(1,\sym^{2}\phi_{k})}.\]
For Eisenstein series $E(z,s)$, we also have Fourier expansion
\[E(z , s) =  y^{s} + \frac{\xi(2s-1)}{\xi(2s)}y^{1-s} +\frac{ 2\sqrt{y} }{\xi(2s)}\sum\limits_{n\neq0}|n|^{s-\frac{1}{2}}\sigma_{1-2s}(|n|)K_{s-\frac{1}{2}}(2\pi|n|y)e^{2\pi i n x}\]
where $\sigma_{s}(n) := \sum\limits_{ab = n}b^{s}$.\\
We usually write $E(z,1/2+it) = E_{t}(z)$, and we have
\[E(z, 1/2+it) = y^{1/2+it} + \frac{\xi(2it)}{\xi(1+ 2it)}y^{1/2-it} +\frac{ 2\sqrt{y} }{\xi(1+2it)}\sum\limits_{n\neq0}\eta_{t}(|n|)K_{it}(2\pi|n|y)e^{2\pi i n x}\]
where $\eta_{t}(n) = \sum\limits_{ab = n}(\frac{a}{b})^{it}$. Denote $\rho_{t}(1) := 1/\xi(1+2it)$, $\rho_{t}(n) = \rho_{t}(1)\eta_{t}(n)$.\par
\[|\rho_{t}(1)|^{2} = \frac{\cosh\pi t}{|\zeta(1+2it)|^{2}} .\]
By \cite{Hoffstein1994CoefficientOM,Iwaniec1990SmallEO} and the standard estimate of Riemann zeta function, we have
\[ (\log t_{k})^{-1}\ll L(1,\sym^{2}\phi_{k}) \ll t_{k}^{\varepsilon}, \quad (\log(1+ |t| ))^{-1} \ll \zeta(1+2it) \ll \log(1+ |t| ).\]
In general, we also have upper bounds the value of Rankin-Selberg $L$-functions at one
\[L(1,f\times g)\ll (t_{f}+t_{g})^{\varepsilon}\]
when $f \neq g$. More refinements are given in \cite{MR2595006}.
And we need the explicit Rankin-Selberg bound for Hecke eigenvalue in \cite{Iwaniec1990SmallEO} or \cite[Corollary 5]{MR2595006} for details
\begin{equation}\label{RankinSelbergbound}
    \sum\limits_{n\leq X}|\lambda_{j}(n)|^{2} \ll_{\varepsilon} X^{1+\varepsilon}t_{j}^{\varepsilon}
\end{equation}
and well-known estimate
\[\sum\limits_{n\leq X}|\eta_{t}(n)|^{2} \leq \sum\limits_{n\leq X}|d(n)|^{2} \ll_{\varepsilon} X^{1+\varepsilon}.\]
\subsection{$L$-functions}It is a brief of complete $L$-functions and functional equation. 
We define the Riemann zeta function
\[
\zeta(s) = \sum\limits_{n\geq1}\frac{1}{n^{s}}, \quad \Re(s) > 1.
\]
and the funciton equation of $\zeta(s)$ is 
\[
\xi(s) := \zeta_{\infty}(s)\zeta(s) = \xi(1-s)
\]
where
\[
\zeta_{\infty}(s) = \pi^{-s/2}\Gamma(s/2)
.\]
Let $\phi$ be a Hecke--Maass cusp form with the spectral parameter $t_{\phi}>0$ for $\SL(2,\mathbb{Z})$ and Fourier coefficients $\lambda_\phi(n)$. Then we define the $\GL(2)$ $L$-function is 
\[L(s, \phi) = \sum\limits_{n\geq 1}\frac{\lambda_{\phi}(n)}{n^{s}}, \quad \Re(s) > 1.\] 
The functional equation of $L(s,\phi)$ is 
\[
  \Lambda(s,\phi)
  := L_\infty(s,\phi)
  L(s,\phi)
  = (-1)^{\epsilon}\Lambda(1-s,\phi)
\]
and
\[
L_\infty(s,\phi) = \pi^{-s} \prod_{\pm}
  \Gamma\left(\frac{s + \epsilon\pm it_\phi}{2}\right).
\]
where $\epsilon = 0$ if $\phi$ is even and $\epsilon = 1$ if $\phi$ is odd. \par

Let $f , g$ be Hecke-Maass cusp forms with spectral parameter $t_{f}, t_{g}$ with the same root number. Then the Rankin-Selberg $L$-function is defined as
\[
L(s,f\times g) = \zeta(2s)\sum\limits_{n \geq 1}\frac{\lambda_{f}(n)\lambda_{g}(n)}{n^{s}}, \quad \Re(s) > 1.
\] 
The functional equation of $L(s,f\times g)$ is 
\[
  \Lambda(s,f\times g)
  := L_\infty(s,f\times g)
  L(s,f\times g)
  = \Lambda(1-s,f\times g)
\]
where
\[
L_\infty(s,f\times g) = \pi^{-2s} \prod_{\pm_{1}}\prod_{\pm_{2}}
  \Gamma\left(\frac{s \pm_1 it_{f} \pm_2 it_{g}}{2}\right).
\]
The symmetric square lift $\sym^2 \phi$ is a  Hecke--Maass cusp form for $\SL(3,\mathbb{Z})$ with Fourier coefficients $A(m,n)$.
The $\GL(3)$ $L$-function is defined as
\[
  L(s,\sym^2 \phi) = \sum_{n\geq1} \frac{A(1,n)}{n^s}, \quad \Re(s)>1.
\]
In particular, we can explicitly write
\[L(s,\sym^{2}\phi) = \zeta(2s)\sum\limits_{n\geq1}\frac{\lambda_{\phi}(n^{2})}{n^{s}}, \quad \Re(s)>1\]
and 
\[
\zeta(s) L(s,\sym^{2}\phi) = L(s,\phi\times \phi).
\]
by Hecke relation. The functional equation of $L(s,\sym^{2}\phi)$ is defined by 
\[
  \Lambda(s,\sym^{2}\phi)
  := L_\infty(s,\sym^{2}\phi)
  L(s,\sym^{2}\phi)
  = \Lambda(1-s,\sym^{2}\phi)
\]
where
\[
L_\infty(s,\sym^{2}\phi) = \pi^{-\frac{3s}{2}} \Gamma(\frac{s}{2})\prod_{\pm}
  \Gamma\left(\frac{s \pm 2t_{\phi} }{2}\right).
\]

Let $\phi_j$ be an even Hecke--Maass cusp form with the spectral parameter $t_j>0$ and Fourier coefficients $\lambda_j(n)$.
The $\GL(3)\times \GL(2)$ Rankin--Selberg $L$-function is defined as
\[
  L(s,\phi_j\times \sym^2 \phi) = \sum_{m\geq1}\sum_{n\geq1} \frac{A(m,n)\lambda_j(n)}{(m^2n)^s}, \quad \Re(s)>1.
\]
The functional equation of $L(s,\phi_j \times \Sym^2 \phi)$ is
\[
  \Lambda(s,\phi_j \times \sym^2 \phi)
  := L_\infty(s,\phi_j \times \sym^2 \phi)
  L(s,\phi_j \times \sym^2 \phi)
  = \Lambda(1-s,\phi_j \times \sym^2 \phi)
\]
where
\[
  L_\infty(s,\phi_j \times \sym^2 \phi) = \pi^{-3s} \prod_{\pm_1}\prod_{\pm_2}\prod_{\pm_3}
  \Gamma\left(\frac{s\pm_1 it_j}{2}\right)
  \Gamma\left(\frac{s+2it_{\phi}\pm_2 it_j}{2}\right)
  \Gamma\left(\frac{s-2it_{\phi}\pm_3 it_j}{2}\right).
\]
Let $E_t$ be the Eisenstein series and $t\in \mathbb{R}$. We have
\[
  L(s+it, \phi)L(s-it, \phi)  = L(s,E_t \times  \phi),
\]
and  the functional equation
\[
  \Lambda(s,E_t \times \phi)
  := L_\infty(s,E_t \times \phi)
  L(s,E_t \times \phi)
  = \Lambda(1-s,E_t \times \phi)
\]
where
\[
  L_\infty(s,E_t \times \phi) = \pi^{-2s} \prod_{\pm_1}\prod_{\pm_2}
  \Gamma\left(\frac{s\pm_1 it \pm_2 it_{\phi}}{2}\right).
\]
There is an important estimate for the first moment of $\GL(3)\times \GL(2)$ $L$-functions.
\begin{lemma}\label{moment_L_functions_6}
  Let $\phi$ be a Hecke-Maass cusp form with the spectral parameter $T>0$.
  Let $ M \geq T^{1/3+\varepsilon}$. Then we have
  \[
    \sum_{T-M\leq t_j \leq T+M}  L(1/2,\phi_j \times \sym^2 \phi)
    \ll T^{1+\varepsilon}M.
  \]
\end{lemma}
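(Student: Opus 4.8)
\subsection*{Proof proposal}

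The plan is to establish Lemma~\ref{moment_L_functions_6} via the approximate functional equation together with the Kuznetsov trace formula, the arithmetic input being $\GL(3)$ Voronoi summation on the off-diagonal. Write $\Pi=\sym^{2}\phi$, a self-dual $\GL(3)$ cusp form with archimedean parameters of size $\asymp T$. For $t_j$ in the window $[T-M,T+M]$ the Rankin--Selberg $L$-function $L(s,\phi_j\times\Pi)$ is self-dual of analytic conductor $\asymp t_j^{2}(2T+t_j)^{2}|2T-t_j|^{2}\asymp T^{6}$ --- note there is no conductor drop, since $|2T-t_j|\asymp T$ across the window --- so its approximate functional equation has length $\asymp T^{3}$, giving $L(1/2,\phi_j\times\sym^{2}\phi)\ll\bigl|\sum_{m,n\ge1}A(m,n)\lambda_j(n)(m^{2}n)^{-1/2}V(m^{2}n/T^{3})\bigr|+T^{-100}$ for a fixed smooth rapidly decaying $V$, the dual term being of the same shape. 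Since these central values are nonnegative --- say by Watson's formula together with $L(1/2,\phi_j)\ge0$, or by Lapid's positivity --- one may bound the sharp sum over $|t_j-T|\le M$ by a smooth even majorant $h\ge0$ supported on $[T-2M,T+2M]$ with $h^{(k)}\ll_k M^{-k}$, and may freely insert the harmonic weight $L(1,\sym^{2}\phi_j)^{-1}$, since $L(1,\sym^{2}\phi_j)^{-1}\gg(\log t_j)^{-1}\gg T^{-\varepsilon}$. It therefore suffices to bound
\[
\sum_{m,n\ge1}\frac{A(m,n)}{(m^{2}n)^{1/2}}\,V\!\Bigl(\frac{m^{2}n}{T^{3}}\Bigr)\Bigl(\sum_j\frac{h(t_j)}{L(1,\sym^{2}\phi_j)}\,\lambda_j(n)+(\text{continuous spectrum})\Bigr)
\]
and its dual.

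Applying the Kuznetsov formula to the inner spectral sum, the diagonal term $n=1$ produces the spectral measure $\asymp\int h(t)\,t\tanh(\pi t)\,dt\asymp MT$ times $\sum_m m^{-1}A(m,1)V(m^{2}/T^{3})\ll T^{\varepsilon}$ (using the Hecke relation to extract the $m$-sum, together with $\sum_{m\le X}|A(m,1)|\ll X^{1+\varepsilon}$), which is exactly the desired main term $\ll MT^{1+\varepsilon}$. The continuous spectrum contributes, after reassembling the $m$- and $n$-sums through $L(s,\sym^{2}\phi\times E_\tau)=L(s+i\tau,\sym^{2}\phi)L(s-i\tau,\sym^{2}\phi)$, a term $\ll T^{\varepsilon}\int|L(1/2+i\tau,\sym^{2}\phi)|^{2}h(\tau)\,d\tau$, which is $\ll MT^{1+\varepsilon}$ by a second-moment estimate for the symmetric-square $L$-function in the short window $[T-M,T+M]$ (the mean-value theorem for Dirichlet polynomials, reinforced where necessary by subconvexity for $L(1/2+i\tau,\sym^{2}\phi)$).

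The main obstacle is the off-diagonal Kloosterman term $\sum_n A(1,n)n^{-1/2}V(n/T^{3})\sum_c c^{-1}S(n,1;c)\,h^{\pm}(4\pi\sqrt n/c)$. The Bessel transform $h^{\pm}$ of a bump at $T$ of width $M$ is $O_A(T^{-A})$ unless its argument exceeds $T^{1-\varepsilon}$, which already forces $n\gg T^{2-\varepsilon}$ and $c\ll\sqrt n\,T^{-1+\varepsilon}$; in the surviving range one substitutes the standard oscillatory expansion of $h^{\pm}$ and applies $\GL(3)$ Voronoi summation to $\sum_n A(1,n)\,e(n\bar d/c)\,W(n)$, where $W$ encodes the $V$-cutoff and the phase of $h^{\pm}$. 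Estimating the resulting dual sum --- using the Rankin--Selberg bound for the dual $\GL(3)$ coefficients, the Weil bound for the residual Kloosterman sums, and stationary-phase analysis of the Hankel-type integral against the remaining $c$-sum --- yields the bound $\ll MT^{1+\varepsilon}$, and it is precisely the hypothesis $M\ge T^{1/3+\varepsilon}$ that makes this estimate close, by bounding the effective length of the Voronoi-dual sum. Alternatively, one may simply invoke an existing first-moment estimate for $\GL(3)\times\GL(2)$ $L$-functions of the shape $\sum_{|t_j-T|\le M}L(1/2,\phi_j\times\Pi)\ll T^{1+\varepsilon}M$, for a $\GL(3)$ form $\Pi$ of archimedean conductor $\asymp T^{3}$, in the uniformity needed here. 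I expect essentially all of the difficulty to reside in this off-diagonal analysis rather than in the spectral set-up.
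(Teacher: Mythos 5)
The paper does not prove Lemma~\ref{moment_L_functions_6} at all: its ``proof'' is a citation to Huang \cite[Theorem 1.6]{huang2024cubic}, and your fallback sentence (``one may simply invoke an existing first-moment estimate\dots'') is exactly what the paper does. Your sketch does match the strategy of the cited proof (approximate functional equation, positivity via Lapid, Kuznetsov, diagonal main term $\asymp MT$, off-diagonal via $\GL(3)$ Voronoi), and the set-up --- conductor $\asymp T^6$ with no conductor drop, AFE length $T^3$, insertion of the harmonic weight --- is correct.

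As a self-contained argument, however, there are two gaps. First and decisively, the entire content of the lemma lives in the off-diagonal Kloosterman term: the claim that the Voronoi-dual sum, the Weil bound, and stationary phase ``yield the bound $\ll MT^{1+\varepsilon}$, and it is precisely the hypothesis $M\ge T^{1/3+\varepsilon}$ that makes this estimate close'' is asserted, not carried out; without the explicit computation showing where $M\ge T^{1/3+\varepsilon}$ enters, nothing has been proved. (You acknowledge this yourself.) Second, your separate treatment of the continuous spectrum does not close as written: $|L(1/2+i\tau,\sym^2\phi)|^2$ for $\tau\asymp T$ has conductor $\asymp T^3$, so the associated Dirichlet polynomial has length $T^{3/2}$, and the mean-value theorem over a window of length $M$ gives $\ll (M+T^{3/2})T^{\varepsilon}$, which exceeds $MT^{1+\varepsilon}$ when $M$ is as small as $T^{1/3}$; known $t$-aspect subconvexity for $\GL(3)$ is nowhere near strong enough to repair this. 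The correct move is to not bound this term at all: the Eisenstein contribution appears on the spectral side of Kuznetsov with a positive sign and assembles to a nonnegative quantity, so by positivity the discrete sum alone is majorized by the geometric side and the continuous part can simply be discarded. For the purposes of this paper, the clean resolution is the one the author takes --- cite \cite[Theorem 1.6]{huang2024cubic} --- rather than re-derive it.
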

\begin{proof}
    See \cite[Theorem 1.6]{huang2024cubic}.
\end{proof}
\subsection{Stirling's formula}
\begin{lemma}
    For fixed  $\sigma \in \mathbb{R}$, we have
    \[\Gamma(\sigma + it ) = e^{-\frac{\pi}{2}|t|}(1+ |t|)^{\sigma - \frac{1}{2}}\exp(it\log \frac{|t|}{e})(2\pi)^{\frac{1}{2}}i^{\sigma - \frac{1}{2}}\{1 + \mathcal{O}(\frac{1}{|t|})\}\]
    and
    \[\frac{\Gamma^{\prime}}{\Gamma}(s) = \log s + \mathcal{O}(\frac{1}{|s|}).\]
\end{lemma}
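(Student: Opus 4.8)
The plan is to reduce both formulas to the classical asymptotic expansion of $\log\Gamma$. First I would recall (or derive from the Binet integral or Euler--Maclaurin summation, as in Whittaker--Watson) the standard Stirling expansion
\[
\log\Gamma(s) = \left(s - \tfrac{1}{2}\right)\log s - s + \tfrac{1}{2}\log(2\pi) + \mathcal{O}\!\left(\tfrac{1}{|s|}\right),
\]
valid as $|s|\to\infty$ uniformly in any sector $|\arg s|\leq \pi - \delta$, with the principal branch of $\log$. Since $s = \sigma + it$ with $\sigma$ fixed and $|t|\to\infty$ stays in such a sector (indeed $\arg s \to \pm\pi/2$), this expansion applies, and its remainder is what will produce the claimed multiplicative error $\{1 + \mathcal{O}(1/|t|)\}$.

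Next I would substitute $s = \sigma + it$ and expand $\log s$ into real and imaginary parts. From $|s| = |t|\sqrt{1 + \sigma^2/t^2}$ and $\arg(\sigma + it) = \sgn(t)\tfrac{\pi}{2} - \tfrac{\sigma}{t} + \mathcal{O}(|t|^{-3})$ one gets
\[
\log s = \log|t| + i\,\sgn(t)\tfrac{\pi}{2} - i\tfrac{\sigma}{t} + \mathcal{O}(|t|^{-2}).
\]
Then I would multiply by $s - \tfrac12 = (\sigma - \tfrac12) + it$ and separate real and imaginary parts. The key cancellation is the cross term $it\cdot(-i\sigma/t) = \sigma$ appearing in the real part, which is exactly cancelled by the $-\sigma$ coming from $-\Re(s)$, leaving
\[
\Re\log\Gamma(s) = \left(\sigma - \tfrac12\right)\log|t| - \tfrac{\pi}{2}|t| + \tfrac12\log(2\pi) + \mathcal{O}(|t|^{-1}).
\]
Exponentiating and writing $|t|^{\sigma - 1/2} = (1+|t|)^{\sigma - 1/2}(1 + \mathcal{O}(|t|^{-1}))$ recovers the modulus in the statement.

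For the phase I would collect the imaginary part,
\[
\Im\log\Gamma(s) = t\log|t| - t + \left(\sigma - \tfrac12\right)\sgn(t)\tfrac{\pi}{2} + \mathcal{O}(|t|^{-1}) = t\log\tfrac{|t|}{e} + \left(\sigma - \tfrac12\right)\sgn(t)\tfrac{\pi}{2} + \mathcal{O}(|t|^{-1}),
\]
and note that for $t>0$ the constant phase equals $\arg(i^{\sigma - 1/2})$, so $e^{i\,\Im\log\Gamma(s)} = \exp\!\big(it\log\tfrac{|t|}{e}\big)\,i^{\sigma - 1/2}(1 + \mathcal{O}(|t|^{-1}))$, which combines with the modulus to give the stated product (the case $t<0$ follows from $\Gamma(\bar s) = \overline{\Gamma(s)}$). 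Finally, for the digamma statement I would invoke the standard asymptotic $\tfrac{\Gamma'}{\Gamma}(s) = \log s - \tfrac{1}{2s} + \mathcal{O}(|s|^{-2})$ (equivalently, differentiate the $\log\Gamma$ expansion, justifying term-by-term differentiation by a Cauchy estimate on the analytic remainder), which immediately yields $\tfrac{\Gamma'}{\Gamma}(s) = \log s + \mathcal{O}(|s|^{-1})$.

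The main obstacle is purely bookkeeping: tracking the phase accurately enough to pin down the constant $i^{\sigma - 1/2}$ and its sign dependence on $t$, and verifying that all secondary corrections (the $\sigma^2/t^2$ term in $\log|s|$ and the $-\sigma/t$ term in $\arg s$) are dominated by the $\mathcal{O}(|s|^{-1})$ remainder of the Stirling series, so that the overall error is genuinely multiplicative of size $\mathcal{O}(1/|t|)$.
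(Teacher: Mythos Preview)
Your proposal is correct and follows the standard derivation of Stirling's formula from the asymptotic expansion of $\log\Gamma(s)$. The paper itself does not prove this lemma at all: it is stated in \S2.3 as a classical fact with no proof or reference, so there is no ``paper's own proof'' to compare against. Your argument is precisely the usual textbook route (expand $\log\Gamma(\sigma+it)$ via Stirling, separate real and imaginary parts, then exponentiate), and the bookkeeping you flag as the main obstacle is handled correctly.
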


\subsection{Moment and subconvexity bound of $\GL(2)$ $L$-functions}
The following fourth moment estimate is from the works of Jutila\cite{Jutila+2001+167+178} and Jutila and Motohashi \cite{Jutila2005UniformBF}.
\begin{lemma}
    Let $\phi_{k}$ be a Hecke-Maass form with spectral parameter $t_{k}$. We have 
\begin{equation}
    \sum\limits_{ K \leq t_{k} \leq K + G}|L(1/2, \phi_{k})|^{4} \ll G K^{1+\varepsilon}
\end{equation}
for any $K^{1/3} \leq G \leq K$.
\end{lemma}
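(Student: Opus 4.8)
The plan is to prove the harmonic-weighted version of the bound and then remove the weight. Write $\omega_k = |\rho_k(1)|^2/\cosh(\pi t_k) = 1/(2L(1,\sym^2\phi_k))$ for the spectral weight produced by the Kuznetsov formula. By the bounds $(\log t_k)^{-1}\ll L(1,\sym^2\phi_k)\ll t_k^\varepsilon$ recorded above, we have $\omega_k^{-1}\ll K^\varepsilon$ on the range $t_k\asymp K$, so $|L(1/2,\phi_k)|^4\le K^\varepsilon\,\omega_k\,|L(1/2,\phi_k)|^4$. Fixing a nonnegative, even, smooth test function $h=h_{K,G}\ge \mathbf{1}_{[K,K+G]}$ concentrated on $|t\mp K|\ll G$ and rapidly decaying, it therefore suffices to establish
\[
\sum_k \omega_k\, h(t_k)\, |L(1/2,\phi_k)|^4 \ll G K^{1+\varepsilon}.
\]
Note also that odd forms satisfy $L(1/2,\phi_k)=0$ by their functional equation, so only even forms (for which $L(1/2,\phi_k)$ is real) contribute.

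First I would open the fourth moment by the approximate functional equation. Writing $L(s,\phi_k)^2=\sum_{n\ge1}(\lambda_k*\lambda_k)(n)\,n^{-s}$ with $(\lambda_k*\lambda_k)(n)=\sum_{ab=n}\lambda_k(a)\lambda_k(b)$, the analytic conductor of $L(s,\phi_k)^2$ is $\asymp t_k^4$, so its approximate functional equation at $s=1/2$ has length $\asymp t_k^2$. Squaring, $|L(1/2,\phi_k)|^4=|L(1/2,\phi_k)^2|^2$ becomes a smoothly truncated double sum
\[
|L(1/2,\phi_k)|^4 \approx \sum_{m,n} \frac{(\lambda_k*\lambda_k)(m)\,(\lambda_k*\lambda_k)(n)}{\sqrt{mn}}\, W\!\Big(\frac{m}{t_k^2}\Big) W\!\Big(\frac{n}{t_k^2}\Big),
\]
with $m,n\ll t_k^{2+\varepsilon}$ and $W$ rapidly decaying. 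Expanding the convolutions and collapsing the resulting products of four Hecke eigenvalues by iterated use of the Hecke relation $\lambda_k(a)\lambda_k(c)=\sum_{e\mid(a,c)}\lambda_k(ac/e^2)$ reduces the whole expression to a linear combination of bilinear terms $\lambda_k(r)\lambda_k(s)$ with explicit divisor-type coefficients.

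The next step is the spectral summation over $k$, by applying the Kuznetsov trace formula to $\sum_k \omega_k\, h(t_k)\,\lambda_k(r)\lambda_k(s)$. This yields a diagonal contribution proportional to $\delta_{r=s}\,\hat h$, together with an off-diagonal term that is a sum of Kloosterman sums $\sum_c c^{-1}S(r,s;c)\,\mathcal{H}\big(4\pi\sqrt{rs}/c\big)$, where $\mathcal{H}$ is the Bessel transform of $h$. The diagonal term is the main term: after reassembling the divisor coefficients it is $\asymp GK$ times a power of $\log K$, i.e. of the permitted size $GK^{1+\varepsilon}$. This reflects Lindel\"of-on-average, since by Weyl's law the window $[K,K+G]$ contains $\asymp GK$ spectral parameters.

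The off-diagonal Kloosterman contribution is the main obstacle, and is precisely the technical heart of Jutila \cite{Jutila+2001+167+178} and Jutila--Motohashi \cite{Jutila2005UniformBF}. Here one needs a sharp analysis of $\mathcal{H}$: because $h$ localizes the spectral parameter to a window of length $G$ about $K$, stationary-phase analysis shows that $\mathcal{H}(x)$ oscillates and is essentially supported where $x\asymp K$, which effectively restricts the moduli $c$ and the lengths $r,s$. Combining these localizations with Weil's bound $|S(r,s;c)|\ll c^{1/2+\varepsilon}(r,s,c)^{1/2}$ — or, more efficiently, with the spectral (Motohashi-type) decomposition of the fourth moment used in the uniform-bounds paper, which exploits genuine cancellation — one bounds the off-diagonal by $\ll GK^{1+\varepsilon}$. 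The constraint $G\ge K^{1/3}$ enters exactly at this balancing point: the error terms from the Bessel and Kloosterman analysis are dominated by the main term only once $G$ exceeds $K^{1/3}$, below which the off-diagonal would take over. Since both cited references prove this uniform estimate, we may quote it directly; the outline above indicates the route one would follow to reprove it.
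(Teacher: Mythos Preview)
Your proposal is correct and takes essentially the same approach as the paper: the paper's proof consists solely of citing Jutila \cite{Jutila+2001+167+178} and invoking the lower bound $L(1,\sym^2\phi_k)\gg(\log t_k)^{-1}$ to remove the harmonic weight, which is exactly your first paragraph. Your additional sketch of Jutila's method (approximate functional equation, Kuznetsov, diagonal versus off-diagonal) is faithful exposition but not required, and you rightly conclude by quoting the reference directly.
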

\begin{proof}
    See \cite{Jutila+2001+167+178} and  use the lower bound of $L(1,\sym^{2}\phi_{k})$.
\end{proof}
\begin{lemma}\label{fourthLemma}Let $\phi_{k}$ be a Hecke-Maass form with spectral parameter $t_{k}$. Let $K$ be large and 
\[G = (K+t)^{4/3}K^{-1+\varepsilon}, \quad 0\leq t \leq K^{3/2-\varepsilon}.\]
We have
\begin{equation}\label{fourthhybrid}
    \sum\limits_{ K \leq t_{k} \leq K + G}\frac{|L(1/2+it, \phi_{k})|^{4}}{L(1,\sym^{2}\phi_{k})} \ll GK^{1+\varepsilon}.
\end{equation}
\end{lemma}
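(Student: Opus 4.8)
The plan is to treat this as a hybrid short-interval fourth moment and to run the Jutila--Motohashi circle-method argument (which already yields the unshifted case, i.e.\ the preceding lemma) with the shift $t$ carried through; the shapes $G=(K+t)^{4/3}K^{-1+\varepsilon}$ and $t\le K^{3/2-\varepsilon}$ are precisely the calibration under which that argument outputs $GK^{1+\varepsilon}$. First I would use that $\phi_k$ is real and self-dual to write $|L(1/2+it,\phi_k)|^2=L(1/2+it,\phi_k)L(1/2-it,\phi_k)=L(1/2,E_t\times\phi_k)$, a central value of a Rankin--Selberg $L$-function whose analytic conductor is $\asymp(1+|t+t_k|)^2(1+|t-t_k|)^2$. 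An approximate functional equation then expresses $|L(1/2+it,\phi_k)|^2$, up to a negligible tail, as $\sum_{d\ll\sqrt N}d^{-1}\sum_{m}\lambda_k(m)\eta_t(m)m^{-1/2}V(d^2m/N)$ plus its dual, where $N\asymp(1+|t+t_k|)(1+|t-t_k|)\ll(K+t)^2$, the coefficient $\eta_t(m)=\sum_{ab=m}(a/b)^{it}$ obeys $|\eta_t(m)|\le d(m)$, and --- crucially for the error bookkeeping --- $\log N\ll\log K$ throughout the stated range of $t$.

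Squaring this, identifying the weight $L(1,\sym^2\phi_k)^{-1}=2|\rho_k(1)|^2/\cosh\pi t_k$ with (twice) the harmonic Kuznetsov weight $\alpha_k$, and summing against a fixed nonnegative smooth majorant $h$ of $\mathbf 1_{[K,K+G]}$ concentrated on an $O(G)$-neighbourhood with rapid decay, one is reduced to $\sum_{m,n\ll N}\frac{\eta_t(m)\overline{\eta_t(n)}}{\sqrt{mn}}\,\omega(m,n)\sum_k\alpha_k h(t_k)\lambda_k(m)\lambda_k(n)$, to which the Kuznetsov trace formula applies (the continuous-spectrum term being handled the same way and contributing with the favourable sign on the diagonal). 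The diagonal $m=n$ gives $\ll\big(\sum_k\alpha_k h(t_k)\big)\sum_{m\ll N}|\eta_t(m)|^2m^{-1}\ll KG\,(\log K)^{O(1)}\ll GK^{1+\varepsilon}$, already of the claimed size. Hence everything reduces to the off-diagonal: opening the Kloosterman sums leaves $\sum_{m\ne n}\frac{\eta_t(m)\overline{\eta_t(n)}}{\sqrt{mn}}\sum_c c^{-1}S(m,n;c)\,\check h(4\pi\sqrt{mn}/c)$, which is exactly the shifted divisor-correlation input of Jutila--Motohashi; combining Weil's bound, the estimate $\sum_{m\le X}|\eta_t(m)|^2\ll X^{1+\varepsilon}$, and the sharp spectral localization forced by $h$, this is $\ll GK^{1+\varepsilon}$ precisely when $G\ge(K+t)^{4/3}K^{-1+\varepsilon}$.

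The main obstacle is this last estimate made uniform in the shift $t$ up to $K^{3/2-\varepsilon}$: one must verify that the Jutila--Motohashi treatment of the additive divisor problem / shifted convolution sums survives with the extra oscillatory factor $\eta_t$ and with both $N$ and the archimedean transform $\check h$ carrying their $t$-dependence, and that the governing conductor enters as $(1+|t+t_k|)(1+|t-t_k|)$, so that the break-even value of $G$ is $(K+t)^{4/3}K^{-1+\varepsilon}$ and substituting it collapses their bound to $GK^{1+\varepsilon}$. All the arithmetic inputs are uniform in $t$ because $|\eta_t(m)|\le d(m)$, so the only genuinely new work is on the archimedean side; with that in hand one may simply invoke the hybrid fourth moment theorem of \cite{Jutila2005UniformBF} in the above form, the short-interval localization being supplied by the majorant $h$.
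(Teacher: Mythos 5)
Your proposal ultimately rests on invoking \cite[Theorem 1]{Jutila2005UniformBF}, which is exactly what the paper does: its entire proof is that citation, since the lemma (with the weight $1/L(1,\sym^{2}\phi_{k})$ being precisely the harmonic weight $\alpha_j=|\rho_j(1)|^2/\cosh\pi t_j$ up to a constant) is a direct restatement of that theorem. Your preceding Kuznetsov/approximate-functional-equation/shifted-divisor sketch is a reasonable outline of the internals of the Jutila--Motohashi argument, but it is not logically needed once the theorem is quoted, so the two proofs coincide in substance.
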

\begin{proof}
    See \cite[Theorem 1]{Jutila2005UniformBF}.
\end{proof}
By choosing $K = t/2$ and changing variable we get
\begin{equation}\label{Fourthhybrid}
\sum\limits_{ t \leq t_{k} \leq t + t^{1/3+\varepsilon}}|L(1/2+2it, \phi_{k})|^{4} \ll t^{4/3+\varepsilon}.    
\end{equation}
From (\ref{Fourthhybrid}), we can  get for some $1/3\leq\theta\leq 1-\varepsilon^{\prime}$
\begin{equation}\label{fourthhybridlong}
    \sum\limits_{ t \leq t_{k} \leq t + t^{\theta + \varepsilon}}|L(1/2+2it, \phi_{k})|^{4}\ll \sum\limits_{ h\ll t^{\theta -1/3+\varepsilon}} \sum\limits_{ t +ht^{1/3} \leq t_{k} \leq t +(h+1) t^{1/3 + \varepsilon}}|L(1/2+2it, \phi_{k})|^{4}\ll t^{1+\theta +\varepsilon}.
\end{equation}
\begin{remark}We also get hybird bound from the fourth moment estimate (\ref{fourthhybrid}) that is
\begin{equation}\label{hybridsub}
    L(1/2+it,\phi_{k}) \ll (t_{k} + t)^{1/3+\varepsilon}.
\end{equation}
But if $t \sim t_{k}$, the bound in (\ref{hybridsub}) is not a subconvexity bound because of the conductor dropping phenomenon.
\end{remark}
    \begin{lemma}[\cite{michel2010subconvexity}]\label{MichelVenkatesh}For some small $\delta > 0$, we have
        \[ L(1/2+2it,\phi_{k}) \ll [(1+ |2t+t_{k}|)(1+ |2t-t_{k}|)]^{1/4-\delta}.\]
    \end{lemma}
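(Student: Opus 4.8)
The plan is to recognize the stated estimate as the hybrid subconvexity bound for the $\GL(2)$ $L$-function $L(s,\phi_k)$ at the off-center point $s=1/2+2it$. First I would pin down the analytic conductor: from the archimedean factor $L_\infty(s,\phi_k)=\pi^{-s}\prod_{\pm}\Gamma((s+\epsilon\pm it_k)/2)$, the completed function at $s=1/2+2it$ has its two spectral parameters sitting at $2t\pm t_k$, so the analytic conductor is $\mathfrak{q}\asymp(1+|2t+t_k|)(1+|2t-t_k|)=T_1T_2$, where I write $T_1=1+|2t+t_k|$ and $T_2=1+|2t-t_k|$. The Phragm\'en--Lindel\"of convexity bound then reads $L(1/2+2it,\phi_k)\ll\mathfrak{q}^{1/4+\varepsilon}=(T_1T_2)^{1/4+\varepsilon}$, and the entire content of the lemma is to lower the exponent to $1/4-\delta$ for an absolute $\delta>0$, uniformly in both $t$ and $t_k$.

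Next I would separate the parameter regimes. Away from the near-diagonal locus $2t\approx t_k$ the bound already follows from the hybrid fourth moment of Jutila and Motohashi recorded in Lemma \ref{fourthLemma}. Indeed, isolating one term and using $L(1,\sym^2\phi_k)\gg(\log t_k)^{-1}$ yields $|L(1/2+2it,\phi_k)|^4\ll(t_k+2t)^{4/3}t_k^\varepsilon$, i.e. $L(1/2+2it,\phi_k)\ll(t_k+2t)^{1/3+\varepsilon}$; comparing with $(T_1T_2)^{1/4}\asymp(t_k+2t)^{1/4}T_2^{1/4}$ shows this already beats convexity whenever $|2t-t_k|\gg(t_k+2t)^{1/3+\varepsilon}$. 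The genuinely hard region is therefore the conductor-dropping range $|2t-t_k|\ll(t_k+2t)^{1/3+\varepsilon}$, and in the extreme case $2t\approx t_k$ one has $T_2\asymp1$, $T_1\asymp t_k$, the conductor collapses to $\mathfrak{q}\asymp t_k$, and the fourth-moment bound $t_k^{1/3}$ is even worse than the convexity bound $t_k^{1/4}$.

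For this remaining regime I would run the amplified second-moment method. Fixing $\phi_k$, I localize the spectral variable to a window $t_j\in[t_k-\Delta,t_k+\Delta]$ by a smooth weight $h(t_j)$ and study
\[
\mathcal{S}=\sum_j h(t_j)\,|A(\phi_j)|^2\,|L(1/2+2it,\phi_j)|^2+(\text{Eisenstein contribution}),
\]
where $A(\phi_j)=\sum_{\ell\sim L}x_\ell\lambda_j(\ell)$ is an amplifier with $x_\ell$ chosen through the Hecke relations so that $A(\phi_k)\gg L^{1/2-\varepsilon}$. Since every term is non-negative, $\mathcal{S}\gg h(t_k)L^{1-\varepsilon}|L(1/2+2it,\phi_k)|^2$. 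I would then open the central $|L|^2$ by the approximate functional equation (of length $\asymp\sqrt{\mathfrak{q}}\asymp\sqrt{t_k}$), expand $|A|^2$, and apply the Kuznetsov formula to the average over $j$. The diagonal term supplies the main contribution, and after optimizing the amplifier length $L$ against the window $\Delta$ it produces the desired power saving, provided the off-diagonal (Kloosterman) term can be shown to be smaller.

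The main obstacle is precisely the off-diagonal analysis in the conductor-dropping range. After Kuznetsov, the spectral integral yields a Bessel transform $\int h(t_j)\,J_{2it_j}(\cdot)\,\dd t_j$ paired against the arithmetic sum from the approximate functional equation, and the twist $n^{-2it}$ with $2t\approx t_k$ forces a stationary point in this transform exactly where the conductor has dropped; this resonance is what destroys the naive estimate and must be resolved by a careful stationary-phase (or $\delta$-method/Voronoi) treatment exhibiting genuine cancellation in the resulting Kloosterman sums. This uniform resolution across the collapsing-conductor locus is the heart of the Michel--Venkatesh argument \cite{michel2010subconvexity}, from which the stated exponent $1/4-\delta$ follows, and I would invoke their result directly for the final bound.
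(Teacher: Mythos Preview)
The paper does not prove this lemma at all: it is stated as a direct citation to Michel--Venkatesh \cite{michel2010subconvexity} with no accompanying argument, so your proposal is already considerably more detailed than the paper's own treatment. Your identification of the analytic conductor $\mathfrak{q}\asymp(1+|2t+t_k|)(1+|2t-t_k|)$, the resulting convexity exponent $1/4$, and the separation into the easy off-diagonal regime (handled by Lemma~\ref{fourthLemma}) versus the hard conductor-dropping regime $2t\approx t_k$ are all correct and useful context. One small caveat: the amplified-Kuznetsov outline you give in the middle paragraphs is a plausible classical route, but it is not really what Michel--Venkatesh do---their method proceeds through regularized automorphic periods and representation-theoretic arguments rather than an explicit Kloosterman-sum analysis, so that portion of your sketch describes an alternative strategy rather than their proof. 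Since both you and the paper ultimately invoke \cite{michel2010subconvexity} for the final bound, there is no gap; your proposal simply supplies expository scaffolding that the paper omits.
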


\begin{lemma}[\cite{MR3635360}]For any $\varepsilon > 0$, we have
    \[
    \sum\limits_{T \leq t_{j} \leq T+1}L(1/2,u_j)^{3} \ll T^{1+\varepsilon}.
    \]
\end{lemma}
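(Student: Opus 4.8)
The plan is to prove the cubic moment bound through Motohashi's spectral decomposition of the fourth moment of the Riemann zeta function, which is the natural bridge between $\int|\zeta(1/2+it)|^{4}\dd t$ and the sum of cubes of Hecke series $L(1/2,u_j)^{3}$. Write $\alpha_j=|\rho_j(1)|^{2}/\cosh(\pi t_j)\asymp 1/L(1,\sym^{2}u_j)$ for the harmonic weights. Since $(\log t_j)^{-1}\ll L(1,\sym^{2}u_j)\ll t_j^{\varepsilon}$, inserting or removing these weights costs at most $T^{\varepsilon}$, so it suffices to bound the harmonic average $\sum_j\alpha_j L(1/2,u_j)^{3}$ localized to $t_j\in[T,T+1]$. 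Throughout I would use that $L(1/2,u_j)\ge 0$ (nonnegativity of central values of $\GL(2)$ $L$-functions, in the spirit of Katok--Sarnak), so that all spectral contributions carry the same sign.

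First I would recall Motohashi's explicit formula: for an admissible even test function $g$,
\[
\int_{-\infty}^{\infty}|\zeta(1/2+it)|^{4}g(t)\dd t=\mathcal{M}(g)+\sum_j\alpha_j L(1/2,u_j)^{3}\,\Theta(t_j;g)+\mathcal{C}(g),
\]
where $\mathcal{M}(g)$ is the main term (a polynomial-in-$\log$ integral against $g$), $\mathcal{C}(g)$ is the continuous-spectrum contribution built from $|\zeta(1/2+it_j)|$-type factors, and $\Theta(t;g)$ is an explicit Bessel/hypergeometric integral transform of $g$. The strategy is to choose $g=g_T$, a smooth nonnegative bump concentrated at height $\asymp T$, engineered so that $\Theta(\cdot;g_T)$ is real, of a definite sign, and bounded below in size by its natural normalization on the window $t_j\in[T,T+1]$, while remaining sign-controlled on the rest of the spectrum. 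Then positivity gives, after tracking the normalization of $\Theta$ against $\alpha_j$,
\[
\sum_{T\le t_j\le T+1}\alpha_j L(1/2,u_j)^{3}\ll T^{\varepsilon}\,\Big(\int_{-\infty}^{\infty}|\zeta(1/2+it)|^{4}g_T(t)\dd t+|\mathcal{M}(g_T)|+|\mathcal{C}(g_T)|\Big)\cdot(\text{width factor}).
\]
The fourth-moment integral over the support of $g_T$ is controlled by the mean-value machinery for $\zeta$ (the classical and short-interval bounds $\int_T^{T+H}|\zeta(1/2+it)|^{4}\dd t\ll HT^{\varepsilon}$), the main term $\mathcal{M}(g_T)$ contributes $\ll T^{\varepsilon}\|g_T\|_{1}$, and $\mathcal{C}(g_T)$ is of lower order; together these combine to the claimed $\sum_{T\le t_j\le T+1}L(1/2,u_j)^{3}\ll T^{1+\varepsilon}$ after removing the harmonic weights.

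The main obstacle is the choice of $g_T$: producing a test function whose Motohashi transform $\Theta(\cdot;g_T)$ is simultaneously genuinely localized and bounded below on the unit window $[T,T+1]$ and sign-controlled on the whole spectrum, so that the positivity argument is lossless. The kernel $\Theta(t;g)$ is highly oscillatory, and standard bumps localize $t_j$ only up to length $\asymp T^{1/2}$ or over the whole range $t_j\lesssim T$; squeezing down to length $1$ requires a careful stationary-phase analysis of the kernel together with an averaging-over-shifts device in the spirit of Ivi\'c and Jutila. As an alternative, more self-contained route I would consider the Conrey--Iwaniec style direct evaluation: open one factor $L(1/2,u_j)$ by its approximate functional equation (length $\asymp T$), leaving the twisted second moment $\sum_j\alpha_j\lambda_j(m)L(1/2,u_j)^{2}$; then insert the approximate functional equation for the square, using $L(s,u_j)^{2}=\zeta(2s)\sum_n d(n)\lambda_j(n)n^{-s}$, and apply Kuznetsov's formula, so that the diagonal produces the main term of size $\asymp T^{1+\varepsilon}$. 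In this approach the principal difficulty shifts to the off-diagonal, namely bounding the resulting shifted-convolution and Kloosterman-sum contribution by $T^{1+\varepsilon}$, which would draw on the Weil bound together with the spectral large-sieve estimate for the conductor-dropping case assembled in \S\ref{sec:2}.
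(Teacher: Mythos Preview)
The paper does not supply a proof of this lemma; it is quoted as a black box from Young \cite{MR3635360}, so there is no in-paper argument to compare against. That said, both of your routes are genuine proofs in the literature: Route~1, inverting Motohashi's spectral identity for $\int|\zeta|^4$ with a test function whose transform $\Theta$ is nonnegative and exploiting $L(1/2,u_j)\ge 0$, is essentially Ivi\'c's original argument for the short-window cubic moment; Route~2, opening the $L$-functions and applying Kuznetsov in the Conrey--Iwaniec style, is what Young actually does in the cited reference (in a more general twisted setting). Your Route~2 is therefore the one matching the citation, while Route~1 is the historically earlier path.

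Two cautions on the sketch. First, the blanket claim $\int_T^{T+H}|\zeta(1/2+it)|^4\,\dd t\ll HT^{\varepsilon}$ is Lindel\"of-strength for general $H$; unconditionally it is only known for $H\ge T^{2/3}$ (Iwaniec), which is what Ivi\'c's choice of test function actually needs, but you should not invoke it as stated. Second, in Route~2 the appeal to ``the spectral large-sieve estimate for the conductor-dropping case assembled in \S\ref{sec:2}'' is misplaced: Lemma~\ref{meanLin} is a mean-square bound for $L(1/2+it,\phi_k\times u_j)$ in a completely different spectral configuration and has no bearing on the Kloosterman/off-diagonal terms that arise after Kuznetsov in the cubic moment. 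That off-diagonal is handled instead by Poisson/Voronoi summation, Weil's bound, and a delicate stationary-phase analysis of the Bessel transforms, which is the substantive content of both Conrey--Iwaniec and Young.
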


\subsection{Integral mean value estimate and subconvexity of Riemann zeta function}The following lemma is well-known in the theory of Riemann zeta function.
\begin{lemma}[\cite{MR604215}]For any $T > 0$,
    \[\int_{T}^{T+T^{2/3}}|\zeta(1/2+it)|^{4}\dd t \ll T^{2/3+\varepsilon}.\]
\end{lemma}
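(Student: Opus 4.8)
The plan is to reduce the fourth moment to the mean square of a Dirichlet polynomial weighted by the divisor function, isolate a diagonal term of the expected size $T^{2/3+\varepsilon}$, and then control the off-diagonal contribution via the additive divisor problem. First I would apply the approximate functional equation to $\zeta(1/2+it)^{2}$. Since the analytic conductor of $\zeta^{2}$ at height $t$ has size $t^{2}$, this gives
\[\zeta(1/2+it)^{2} = \sum_{n}\frac{d(n)}{n^{1/2+it}}V\!\left(\frac{n}{t}\right) + \gamma(t)\sum_{n}\frac{d(n)}{n^{1/2-it}}V\!\left(\frac{n}{t}\right) + O(t^{-A}),\]
where $V$ is a smooth cutoff essentially equal to $1$ for $n \ll t$ and rapidly decaying beyond, and $\gamma(t)$ is a factor of modulus one. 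Squaring and using $|\gamma(t)|=1$, after a dyadic decomposition of the ranges of $n$ it suffices to bound integrals of the shape
\[\int_{T}^{T+T^{2/3}}\left|\sum_{n\sim N}\frac{d(n)}{n^{1/2+it}}\right|^{2}\dd t, \qquad N \ll T^{1+\varepsilon}.\]

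Expanding the square produces diagonal and off-diagonal terms. The diagonal $m=n$ contributes
\[T^{2/3}\sum_{n\sim N}\frac{d(n)^{2}}{n} \ll T^{2/3}\log^{4}T \ll T^{2/3+\varepsilon},\]
using $\sum_{n\leq X}d(n)^{2}\ll X\log^{3}X$ and partial summation; this is already the target bound. The crux is therefore the off-diagonal. Writing $n=m+h$ with $h\neq 0$, these terms reduce to shifted convolution (additive divisor) sums $\sum_{m}d(m)d(m+h)$ weighted by the oscillatory factor $\int_{T}^{T+T^{2/3}}(1+h/m)^{it}\dd t$, whose modulus is $\ll \min(T^{2/3},\,m/|h|)$. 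The subtlety is that a term-by-term estimate of these sums is far too lossy — it would manufacture a spurious contribution of size $T\log^{c}T$: the arithmetic main terms of the additive divisor problem do not cancel individually but reassemble, together with the diagonal, into the genuine main term of the fourth moment of size $T^{2/3}\log^{4}T$, while only their combined error term should survive.

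Thus the main obstacle, and essentially the whole content of the lemma, is an estimate for the additive divisor problem that is uniform in the shift $h$ and sharp enough that the error surviving over the window $[T,T+T^{2/3}]$ is $\ll T^{2/3+\varepsilon}$. Following Iwaniec, I would treat $\sum_{m}d(m)d(m+h)$, against the relevant smooth weight, by the spectral theory of automorphic forms: after opening the divisor functions and applying the Voronoi summation or delta method, the problem is recast as sums of Kloosterman sums, to which the Kuznetsov trace formula applies, yielding a main term plus a spectral error that saves a power of $T$ uniformly in $h$ in the relevant range. It is precisely the balance between the length $N\asymp T$ of the Dirichlet polynomial and the window length $T^{2/3}$ that makes this estimate effective and that explains the exponent $2/3$; for windows substantially shorter than $T^{2/3}$ the off-diagonal ceases to be controllable by these methods.

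Finally I would sum the resulting bounds over the dyadic ranges of $N$ and over the admissible shifts $h$, together with the trivial treatment of the tail $n \gg T^{1+\varepsilon}$ coming from the rapid decay of $V$ and the symmetry of the two sums in the functional equation. Collecting the diagonal main term $\ll T^{2/3+\varepsilon}$ and the spectrally controlled off-diagonal error $\ll T^{2/3+\varepsilon}$ yields
\[\int_{T}^{T+T^{2/3}}|\zeta(1/2+it)|^{4}\dd t \ll T^{2/3+\varepsilon},\]
which is the assertion; the execution of the off-diagonal analysis is exactly that carried out in \cite{MR604215}.
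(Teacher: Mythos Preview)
The paper does not give a proof of this lemma at all; it simply quotes the result from Iwaniec \cite{MR604215} as a black box. Your sketch accurately reproduces the strategy of that reference (approximate functional equation for $\zeta^{2}$, reduction to short mean squares of divisor-weighted Dirichlet polynomials, diagonal of size $T^{2/3+\varepsilon}$, and off-diagonal handled via the additive divisor problem with spectral input), so there is nothing to compare --- you have supplied what the paper deliberately omits.
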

\begin{lemma}[\cite{MR3556291}]For any $\varepsilon > 0$, we have
    \[\zeta(1/2+it) \ll t^{13/84+\varepsilon}.\]
\end{lemma}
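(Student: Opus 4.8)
The bound in question is Bourgain's subconvexity estimate, obtained through the Bourgain--Demeter decoupling method, and the plan is to follow that circle of ideas. First I would invoke the approximate functional equation for $\zeta$, which reduces the problem to bounding the Dirichlet polynomials $\sum_{n \sim N} n^{-1/2-it}$ uniformly over dyadic ranges $N \ll t^{1/2}$. After summation by parts this amounts to estimating the pure exponential sum $S_N(t) = \sum_{N < n \le 2N} e\!\left(-\tfrac{t}{2\pi}\log n\right)$, where $e(x) := e^{2\pi i x}$. The target $\mu(1/2) \le 13/84$ is equivalent to establishing that $\bigl(\tfrac{13}{84}+\varepsilon, \tfrac{55}{84}+\varepsilon\bigr)$ is an exponent pair: a general exponent pair $(k,\lambda)$ yields $\zeta(1/2+it) \ll t^{(k+\lambda-1/2)/2+\varepsilon}$, and the choice $(k,\lambda) = (\tfrac{13}{84},\tfrac{55}{84})$ gives exactly $\tfrac{13}{84}$, improving on the van der Corput pair $(\tfrac16,\tfrac23)$ that produces the classical exponent $\tfrac16$.

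Next I would run the van der Corput / Bombieri--Iwaniec machinery on $S_N(t)$. Subdividing $[N,2N]$ into short intervals and Taylor-expanding the phase $\phi(x)=\log x$ to third order replaces it by a cubic polynomial, so that after the $A$- and $B$-processes (Poisson summation and stationary phase) the size of $S_N(t)$ is controlled by the number of solutions of the associated Vinogradov-type system
\[
\sum_{i=1}^{s}\bigl(n_i^{\,j} - m_i^{\,j}\bigr) = 0, \qquad j = 1,2,3,
\]
with the variables ranging over a suitable short interval. This is precisely the mean-value quantity attached to the nondegenerate moment curve $\gamma(\xi) = (\xi,\xi^2,\xi^3)$ in $\mathbb{R}^3$.

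The crux, and the genuinely new ingredient over the classical Weyl--Huxley bounds, is to feed in the sharp $\ell^2$-decoupling inequality of Bourgain--Demeter for $\gamma$: decoupling into $\delta^{-1}$ arcs at scale $\delta$ costs only $\delta^{-\varepsilon}$ in $L^p$ for $p$ up to the critical exponent $12 = \dim\cdot(\dim+1)$, which delivers the main-conjecture bound for the counting function above. Inserting this optimal count back through the $AB$-iteration and optimizing the lengths of the subdivisions produces the exponent pair $\bigl(\tfrac{13}{84}+\varepsilon,\tfrac{55}{84}+\varepsilon\bigr)$, hence the stated bound on $\zeta(1/2+it)$.

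The hard part is twofold. First, the decoupling theorem itself is deep; a self-contained treatment would require the multilinear Kakeya and broad--narrow induction-on-scales that underlie it, though here I would cite it as a black box. Second, and more delicate in the analytic-number-theory application, is the bookkeeping: $\log$ is not a polynomial, so one must control the cubic Taylor error uniformly, track how each $B$-process transforms the relevant curve under stationary phase, and carry out the final optimization of the process parameters precisely enough to land on the exponent $13/84$ rather than a weaker value. Managing these uniformity and optimization issues, rather than the decoupling input per se, is where the real work lies.
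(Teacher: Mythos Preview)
The paper does not prove this lemma at all: it is stated with a bare citation to Bourgain's paper and used as a black box, as is standard for a deep external input of this kind. So there is no ``paper's own proof'' to compare against; the intended content of the lemma in this manuscript is simply ``see \cite{MR3556291}''.

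Your proposal, by contrast, is a sketch of the actual argument behind Bourgain's theorem. As a high-level outline it is broadly faithful: the reduction via the approximate functional equation to short exponential sums, the reformulation in terms of exponent pairs (and indeed $(k,\lambda)=(\tfrac{13}{84},\tfrac{55}{84})$ gives the exponent $\tfrac{13}{84}$), the passage to a cubic phase and the Vinogradov system for $k=3$, and the decisive input being the sharp $\ell^2$ decoupling for the moment curve in $\mathbb{R}^3$ at the critical exponent $p=12$. One caveat: the precise route in Bourgain's paper to the specific value $\tfrac{13}{84}$ is somewhat more delicate than ``plug the optimal Vinogradov count into the $AB$-process''; there is a genuine optimization over the Bombieri--Iwaniec parameters, and the decoupling input is applied in a form tailored to the problem rather than as a straight mean-value substitution. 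Your closing paragraph correctly flags this bookkeeping as the real work, but if you were to write this out in full you would need to follow Bourgain's specific optimization rather than the generic exponent-pair calculus.

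For the purposes of this paper, however, the appropriate ``proof'' is a one-line citation, and your sketch, while informative, goes well beyond what is required here.
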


\subsection{The spectral large sieve inequality}
We recall the following spectral large sieve inequality in \cite{Jutilaspectrallargesieve,MR1409382}.
\begin{lemma}\label{largesieve}For any $0< \Delta \leq T$. We have 
\[\sum\limits_{T \leq t_{k} \leq T+\Delta}|\sum\limits_{n\leq N}a_{n}\lambda_{k}(n)|^{2} \ll (N + T\Delta)^{1+\varepsilon}\sum\limits_{n\leq N}|a_{n}|^{2}\]
    for any complex sequence $\{a_{n}\}$.
\end{lemma}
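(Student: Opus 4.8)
The plan is to prove the inequality via the Kuznetsov trace formula, first reducing to the harmonically normalized Fourier coefficients so that Kuznetsov applies directly. Write $\rho_k(n) := \rho_k(1)\lambda_k(n)$ for the $n$-th Fourier coefficient of $\phi_k$, so that $\frac{1}{\cosh\pi t_k}|\sum_{n\le N}a_n\rho_k(n)|^2 = \frac{|\rho_k(1)|^2}{\cosh\pi t_k}|\sum_{n\le N}a_n\lambda_k(n)|^2 = \frac{1}{2L(1,\sym^2\phi_k)}|\sum_{n\le N}a_n\lambda_k(n)|^2$, using the formula for $|\rho_k(1)|^2$ recalled above. Since $L(1,\sym^2\phi_k)\ll t_k^\varepsilon\ll T^\varepsilon$ for $T\le t_k\le T+\Delta\le 2T$, it suffices to prove the harmonically weighted estimate
\[
\Sigma := \sum_{T\le t_k\le T+\Delta}\frac{1}{\cosh\pi t_k}\Big|\sum_{n\le N}a_n\rho_k(n)\Big|^2 \ll (N + T\Delta)^{1+\varepsilon}\sum_{n\le N}|a_n|^2,
\]
as then the left-hand side of the lemma is $\ll T^\varepsilon\,\Sigma$. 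To detect the short window $[T,T+\Delta]$ I would choose an even test function $h=h_{T,\Delta}$, holomorphic and of rapid decay in a strip $|\Im t|\le 1/2+\delta$, with $h\ge 0$ on $\mathbb{R}$ and $h\ge 1$ on $[T,T+\Delta]$; the Gaussian $h(t)=e^{-((t-T)/\Delta)^2}+e^{-((t+T)/\Delta)^2}$ (widened to scale $\gg 1$ if $\Delta<1$) is admissible.

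Next I would expand $\Sigma$, insert $h(t_k)\ge 1$ by positivity, and append the non-negative continuous-spectrum contribution, so that $\Sigma$ is bounded above by the full spectral side $\sum_{m,n}a_m\bar a_n\big[\sum_k\frac{h(t_k)}{\cosh\pi t_k}\rho_k(m)\overline{\rho_k(n)}+(\text{Eis})\big]$ of the Kuznetsov formula. Kuznetsov then equates this with a diagonal term $\delta_{m=n}\cdot c\int_{\mathbb R}h(t)\,t\tanh(\pi t)\,dt$ plus an off-diagonal term $\sum_{m,n}a_m\bar a_n\sum_{c\ge1}\frac{S(m,n;c)}{c}H\!\left(\frac{4\pi\sqrt{mn}}{c}\right)$, where $H=H_h$ is the Bessel transform attached to $h$. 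The diagonal contributes $\ll \big(\int_{\mathbb R}h(t)\,t\,dt\big)\sum_n|a_n|^2\ll T\Delta\sum_n|a_n|^2$, matching the Plancherel/Weyl density $t\,dt$ concentrated on a window of width $\asymp\Delta$ around $T$.

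The main obstacle is the off-diagonal Kloosterman term. Here I would use Weil's bound $|S(m,n;c)|\le d(c)\,(m,n,c)^{1/2}c^{1/2}$ together with the analytic properties of $H_h$: for $h$ concentrated at height $T$ of width $\Delta$, the transform $H_h(x)$ is rapidly decaying for $x\ll T^{1-\varepsilon}$ and of controlled polynomial size for $x\gtrsim T$, which effectively restricts the modulus to $c\ll \sqrt{mn}/T\le N/T$. Inserting these bounds and summing first over $c$, then treating the arithmetic factor $(m,n,c)^{1/2}$ and the double sum over $m,n\le N$ by Cauchy--Schwarz (or the Rankin trick), should yield an off-diagonal contribution $\ll N^{1+\varepsilon}\sum_n|a_n|^2$. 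Combining the two pieces gives $\Sigma\ll (T\Delta+N)^{1+\varepsilon}\sum_n|a_n|^2$, and hence the claimed bound. The delicate point, and the reason the statement is attributed to the cited works, is the uniform control of the Bessel kernel $H_h$ when the spectral parameter is as large as $T$; this is precisely the analysis carried out by Deshouillers--Iwaniec and Jutila, whose argument I would follow.
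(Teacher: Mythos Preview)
The paper does not supply its own proof of this lemma; it simply quotes the spectral large sieve in short windows as a known input, citing Jutila and Luo. Your Kuznetsov-based outline---pass to harmonically weighted Fourier coefficients via $|\rho_k(1)|^2=\cosh(\pi t_k)/(2L(1,\sym^2\phi_k))$ and $L(1,\sym^2\phi_k)\ll t_k^\varepsilon$, majorize by a smooth even $h$ localized at height $T$ and width $\Delta$, read off $T\Delta$ from the delta term and $N^{1+\varepsilon}$ from the Kloosterman side using Weil together with the decay of the Bessel transform for $x\ll T$---is exactly the standard route taken in those references, and your identification of the Bessel-kernel analysis as the only genuinely delicate step is accurate.
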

Thanks to the conductor dropping phenomenon, we can establish the following bound of second moment of $L$-functions by using large sieve inequality. 
\begin{lemma}\label{meanLin}If $t_{j} ,\tau < t$, we have
\[\sum\limits_{|t_{k} - t|\leq t_{j}}\frac{|L(1/2+it ,\phi_{k}\times u_{j})|^{2}}{\prod\limits_{\pm}(1+|t_{k}-t \pm t_{j}|)^{1/2}} \ll t^{1+\varepsilon}\]
and
\[\sum\limits_{|t_{k} - t|\leq \tau }\frac{|L(1/2+it ,\phi_{k}\times E_{\tau})|^{2}}{\prod\limits_{\pm}(1+|t_{k}-t \pm \tau|)^{1/2}} \ll t^{1+\varepsilon}\]
\end{lemma}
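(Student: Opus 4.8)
The plan is to combine an approximate functional equation with the spectral large sieve of Lemma~\ref{largesieve}; the decisive feature is that when $|t_k-t|\le t_j<t$ the Rankin--Selberg $L$-function $L(1/2+it,\phi_k\times u_j)$ drops conductor. Indeed, of the four archimedean parameters $\tfrac12+i(t\pm t_k\pm t_j)$ occurring in $L_\infty(s,\phi_k\times u_j)$ at $s=1/2+it$, two have size $\asymp t$ while the remaining two have size $\ll t_j$, so the analytic conductor is only $\asymp t^2 Q_k$, where $Q_k:=\prod_\pm(1+|t_k-t\pm t_j|)$ is exactly the quantity in the denominator. I will prove the first inequality, the second being obtained by the identical argument as noted at the end. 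The single term $\phi_k=u_j$, which occurs only when $t_j\ge t/2$, is separated off and bounded by the convexity bound for $\zeta(1/2+it)L(1/2+it,\sym^2 u_j)$, whose conductor at $1/2+it$ is $\ll t^3(1+|t-2t_j|)$ (conductor dropping again); a short computation with the weight $(1+|2t_j-t|)^{1/2}(1+t)^{1/2}$ shows its contribution is $\ll t^{1+\varepsilon}$.

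Write $L(s,\phi_k\times u_j)=\zeta(2s)\sum_m\lambda_k(m)\lambda_j(m)m^{-s}=\sum_n c(n)n^{-s}$ with $c(n)=\sum_{d^2m=n}\lambda_k(m)\lambda_j(m)$. By the approximate functional equation and a dyadic partition of unity,
\[
L(1/2+it,\phi_k\times u_j)\ll t^{\varepsilon}\max_{N\ll t\,Q_k^{1/2}t^{\varepsilon}}\Bigl|\sum_n c(n)\,n^{-1/2-it}\,V(n/N)\Bigr|+t^{-100},
\]
with $V$ a fixed bump function supported near $1$, the dual sum having the same shape. Substituting $n=d^2m$ and separating $d$ gives $\sum_n c(n)n^{-1/2-it}V(n/N)=\sum_{d\ll N^{1/2}}d^{-1-2it}\sum_m a_m(d)\lambda_k(m)$ with $a_m(d)=\lambda_j(m)m^{-1/2-it}V(d^2m/N)$ supported on $m\ll N/d^2$; after Cauchy--Schwarz in $d$ against $\sum_d d^{-1-\varepsilon}=O(1)$ (costing only $t^{\varepsilon}$), the matter is reduced to bounding, for each $d$ and each dyadic $N$, a sum $\sum_{t_k\in\mathcal I}\bigl|\sum_m a_m(d)\lambda_k(m)\bigr|^2$ where $\mathcal I$ is the relevant range of $t_k$. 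By Lemma~\ref{largesieve} this is $\ll(N/d^2+T\Delta)^{1+\varepsilon}\sum_{m\ll N/d^2}|\lambda_j(m)|^2/m$ with $T=\inf\mathcal I$, $\Delta=|\mathcal I|$, and the coefficient sum is $\ll t^{\varepsilon}$ by (\ref{RankinSelbergbound}) and partial summation. Summing over $d$ yields $\sum_{t_k\in\mathcal I}|L(1/2+it,\phi_k\times u_j)|^2\ll t^{\varepsilon}(N^{1+\varepsilon}+(T\Delta)^{1+\varepsilon})$.

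It remains to decompose the $t_k$-sum. Writing $r=t_k-t\in[-t_j,t_j]$ we have $Q_k=(1+t_j)^2-r^2$, so $Q_k$ ranges in $[\asymp t_j,\asymp t_j^2]$; I would split the sum into $O(\log^2 t)$ pieces according to the dyadic size $A\le 2t$ of $t_k$ and the dyadic size $R\in[t_j,t_j^2]$ of $Q_k$. On such a piece $T\asymp A$, the length is $\Delta\ll\min(A,R/t_j)$, the weight is $\asymp R^{1/2}$, and $N\ll tR^{1/2}t^{\varepsilon}$; since $\Delta\le A=T$, Lemma~\ref{largesieve} applies, and combined with the previous paragraph the contribution of the piece to the left side of the lemma is, using $T\Delta\ll A\cdot(R/t_j)\ll tR/t_j$,
\[
\ll\frac{t^{\varepsilon}}{R^{1/2}}\bigl((tR^{1/2})^{1+\varepsilon}+(tR/t_j)^{1+\varepsilon}\bigr)\ll t^{1+\varepsilon},
\]
where both terms are $\ll t^{1+\varepsilon}$ because $R\le t_j^2\le t^2$. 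Summing over the $O(\log^2 t)$ pieces (and the $O(\log t)$ dyadic $N$) completes the proof. The Eisenstein case is identical: $L(s,\phi_k\times E_\tau)=L(s+i\tau,\phi_k)L(s-i\tau,\phi_k)=\sum_n c(n)n^{-s}$ with $c(n)=\sum_{d^2m=n}\lambda_k(m)\eta_\tau(m)$, its conductor at $1/2+it$ is again $\ll t^2\prod_\pm(1+|t_k-t\pm\tau|)$ by the same two-small-two-large dichotomy, one uses $\sum_{m\le X}|\eta_\tau(m)|^2\le\sum_{m\le X}d(m)^2\ll X^{1+\varepsilon}$ in place of (\ref{RankinSelbergbound}), and there is no exceptional term since each $L(1/2+iu,\phi_k)$ is entire.

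I expect the main difficulty to be the bookkeeping of the final decomposition rather than any single estimate. One must cut the $t_k$-sum finely enough — in both the size of $t_k$ and the size of $Q_k$ — so that on every piece the hypothesis $\Delta\le T$ of Lemma~\ref{largesieve} holds and both $N^{1+\varepsilon}/R^{1/2}$ and $(T\Delta)^{1+\varepsilon}/R^{1/2}$ stay $\ll t^{1+\varepsilon}$; the tightest configuration is when $t_j$ is close to $t$, where the $t_k$ run over essentially all of $[1,2t]$ and the shell near $r=0$ must be further dyadically subdivided in $t_k$ so that $\Delta\asymp T$. What makes everything work is the length $N\ll tQ_k^{1/2}t^{\varepsilon}$ in the approximate functional equation, i.e.\ the fact that two of the four gamma factors are of size $\ll t_j$ rather than $\asymp t$: without this conductor drop the length would be $\asymp t^2$ and the large sieve would lose a full power of $t$.
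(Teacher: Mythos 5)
Your argument is correct and follows essentially the same route as the paper's proof: exploit the conductor drop ($C\asymp t^2\prod_\pm(1+|t_k-t\pm t_j|)$) to shorten the approximate functional equation, apply the spectral large sieve of Lemma \ref{largesieve} on subintervals adapted to the weight in the denominator, and control the coefficient sum by the Rankin--Selberg bound (\ref{RankinSelbergbound}). The only differences are bookkeeping — you decompose dyadically in $Q_k$ and in the size of $t_k$ where the paper uses nested windows $[T-U,T+U]$ with $U$ tied to the distance from $t+t_j$, and you separate the diagonal term $\phi_k=u_j$, which is harmless but not needed.
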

\begin{proof}By approximate functional equation 
 \cite{iwaniec2004analytic}, we get
    \[L(1/2+it ,\phi_{k}\times u_{j}) \ll t^{\varepsilon}\int_{-t^{\varepsilon}}^{t^{\varepsilon}}|\sum\limits_{m^{2}n\leq (C(\pi))^{1/2+\varepsilon}}\frac{\lambda_{k}(n)\lambda_{j}(n)}{(m^{2}n)^{1/2+it + \varepsilon^{\prime} + iy}}|\dd y\]
    where $C(\pi)$ is the analytic conductor of $L(1/2+it ,\phi_{k}\times u_{j})$. This expression is from standard approximate functional equation and expand the weight function explicitly (Also see \cite[Lemma 4.1]{hua2024jointvaluedistributionheckemaass}).
  To utilize the denominator we divide the sum in two parts $\sum\limits_{t\leq t_{k}\leq t+  t_{j} }$ and $\sum\limits_{t - t_{j}\leq t_{k}\leq t }$, we only consider the first one from the symmetric construction. Then we get
  \[\sum\limits_{t\leq t_{k}\leq t+  t_{j} }\frac{|L(1/2+it ,\phi_{k}\times u_{j})|^{2}}{\prod\limits_{\pm}(1+|t_{k}-t \pm t_{j}|)^{1/2}} \ll \frac{1}{t_{j}^{1/2}}\sum\limits_{t\leq t_{k}\leq t+  t_{j} }\frac{|L(1/2+it ,\phi_{k}\times u_{j})|^{2}}{(1+|t_{k}-t -t_{j}|)^{1/2}}.\]
  By dyadic method we consider the inner sum as
  \[t^{\varepsilon}\max_{t\leq T \leq t +t_{j}}\frac{1}{U^{1/2}}\sum\limits_{ T - U \leq t_{k} \leq T+U}|L(1/2+it , \phi_k \times u_{j})|^{2}\]
  where $U = \frac{t + t_{j} - T}{2} + 1$ and obviously $U \ll t_{j}$. Note $C(\pi) \asymp t^2 t_{j}U$ then 
    \begin{equation*}
        \begin{aligned}
   \sum\limits_{ T - U \leq t_{k} \leq T+U}|L(1/2+it , \phi_k \times u_{j})|^{2}& \ll\sum\limits_{ T - U \leq t_{k} \leq T+U}\int_{-t^{\varepsilon}}^{t^{\varepsilon}}|\sum\limits_{m^{2}n\leq (tt_{j}^{1/2}U^{1/2})^{1+\varepsilon}}\frac{\lambda_{k}(n)\lambda_{j}(n)}{(m^{2}n)^{1/2+it + \varepsilon^{\prime} + iy}}|^{2}\dd y.
        \end{aligned}
    \end{equation*}
    For the inner sum, we use Lemma \ref{largesieve} and get
    \begin{equation*}
        \begin{aligned}
   \sum\limits_{ T - U \leq t_{j} \leq T+U}\int_{-t^{\varepsilon}}^{t^{\varepsilon}}|&\sum\limits_{m^{2}n\leq (tt_{j}^{1/2}U^{1/2})^{1+\varepsilon}}\frac{\lambda_{k}(n)\lambda_{j}(n)}{(m^{2}n)^{1/2+it + \varepsilon^{\prime} + iy}}|^{2}\dd y \\
   &\ll
   (tt_{j}^{1/2}U^{1/2} + TU)^{1+\varepsilon}\sum\limits_{n\leq (tt_{j}^{1/2}U^{1/2})^{1+\varepsilon}}|\sum\limits_{m \leq \sqrt{\frac{ (tt_{j}^{1/2}U^{1/2})^{1+\varepsilon}}{n}}}\frac{\lambda_{j}(n)}{(m^{2}n)^{1/2+it}} |^{2}
        \end{aligned}
    \end{equation*}
   By bound (\ref{RankinSelbergbound}) we control the inner sum above and get 
    \[\sum\limits_{ T - U \leq t_{k} \leq T+U}|L(1/2+it , \phi_k \times u_{j})|^{2} \ll (tt_{j}^{1/2}U^{1/2} + TU)^{1+\varepsilon}.\]
    Thus we get
    \[ \sum\limits_{t\leq t_{k}\leq t+  t_{j} }\frac{|L(1/2+it ,\phi_{k}\times u_{j})|^{2}}{\prod\limits_{\pm}(1+|t-t_{k} \pm t_{j}|)^{1/2}} \ll \frac{t^{\varepsilon}}{t_{j}^{1/2}}\max_{t\leq T \leq t +t_{j}}\frac{1}{U^{1/2}}(tt_{j}^{1/2}U^{1/2} + TU)^{1+\varepsilon} \ll t^{1+\varepsilon}.\]
\end{proof}

\section{\label{sec:3}Triple product formulas and Plancherel formula}
Let $\phi , \phi_{k}, u_{j}$ be the Hecke-Maass cusp forms and $E_{t}, E_{\tau}$ be the Eisenstein series with spectral parameters $t_{\phi}, t_{k},t_{j}, t , \tau$.
\subsection{Rankin-Selberg theory and Watson's formula}
 By Rankin-Selberg method(See \cite[\S 7.2]{Goldfeld2006AutomorphicFA})  we have
\[\langle u_{j}E_{t} , \phi_{k}\rangle =  \frac{\rho_{j}(1)\rho_{k}(1)\Lambda(1/2+it ,\phi_{k}\times u_{j})}{\xi(1+2it)},\]
\[\langle E_{t} , u_{j}^{2}\rangle =  \frac{\rho_{j}(1)^{2}\Lambda(1/2+it ,\sym^{2}u_{j})\xi(1/2+it)}{\xi(1+2it)},\]
\[\langle E_{\tau}E_{t}, \phi_{k}\rangle = \frac{\rho_{k}(1)\rho_{t}(1)\Lambda(1/2+i\tau + it ,\phi_{k})\Lambda(1/2+i\tau - it ,\phi_{k})}{\xi(1+2i\tau)}.\]
\[\langle E_{t}^{2}, \phi_{k}\rangle = \frac{\rho_{k}(1)\rho_{t}(1)\Lambda(1/2+ 2it ,\phi_{k})\Lambda(1/2 ,\phi_{k})}{\xi(1+2it)}.\]
By Watson's formula \cite{watson2008rankintripleproductsquantum}, we have
\[
  |\langle \phi_k \phi,  u_j \rangle |^2
  = \frac{\Lambda(1/2,\phi_k\times \phi\times u_j)}{8 \Lambda(1,\Sym^2 \phi_k)\Lambda(1,\Sym^2 \phi)\Lambda(1,\Sym^2 u_j)}
\]
and
\[
  |\langle u_j ,\phi^2  \rangle|^2
  = \frac{\Lambda(1/2,u_j) \Lambda(1/2, \Sym^2 \phi\times u_j)}{8  \Lambda(1,\Sym^2 \phi)^2 \Lambda(1,\Sym^2 u_j)}.
\]
\subsection{Regularized inner product and regularized Plancherel formula}
All this section is introduced in the previous work \cite[\S 3]{DJANKOVIC2018236}.\par
We will make use of the regularization process given by Zagier in \cite{zagier1981rankin}. 

Let $F(z)$ be a continuous $\SL(2,\mathbb{Z})$-invariant function on $\mathbb{H}$. It is called \emph{renormalizable} if there is a function $\Phi(y)$ on $\mathbb{R}_{>0}$ of the form
\begin{equation} \label{Phi_def}
\Phi(y)=\sum_{j=1}^l \frac{c_j}{n_j!} y^{\alpha_j} \log^{n_j} y,
\end{equation}
with $c_j, \alpha_j \in \mathbb{C}$ and $n_j \in \mathbb{Z}_{\ge 0}$, such that
$$
F(z)= \Phi(y) + O(y^{-N})
$$
as $y \rightarrow \infty$, and for any $N>0$.

If $F(z)=\sum_{n= - \infty}^{\infty} a_n(y) e(n x)$ is the Fourier expansion of $F$ at the cusp $\infty$, in particular if $a_0(y)$ is its 0-term, and if no $\alpha_j$ equals 0 or 1, then the function
$$
R(F, s):=\int_0^{\infty} (a_0(y) - \Phi(y) )  y^{s-2} dy,
$$
where the defining integral converges for sufficiently large ${\rm Re}(s)$, can be meromorphically continued to all $s$ and has a simple pole at $s=1$. Then one can define the regularized integral with
\begin{equation} \label{reg_first_by_R(F,s)}
\int_{\mathbb{X}}^{reg} F(z) d\mu(z) := \frac{\pi}{3}  {\rm Res}_{s=1} R(F, s).
\end{equation}

Under the assumption that no $\alpha_j=1$, let $\mathcal{E}_{\Phi}(z)$ denote a linear combination of Eisenstein series $E(z, \alpha_j)$ (or suitable derivatives thereof) corresponding to all the exponents in (\ref{Phi_def}) with ${\rm Re}(\alpha_j) > 1/2$, i.e. such that $F(z) - \mathcal{E}_{\Phi}(z)=O(y^{1/2})$. An important definition of regularization is given by
\begin{equation} \label{reg_subtract_Eisen}
\int_{\mathbb{X}}^{reg} F(z) \dd\mu z
=\int_{\mathbb{X}} (F(z) -\mathcal{E}_{\Phi}(z)) \dd\mu z.
\end{equation}
The triple product formula for Eisenstein series is 
\begin{lemma}[\cite{zagier1981rankin}]
    \begin{equation} \label{3eisen}
    \begin{aligned}
   \int_{\mathbb{X}}^{reg}& E(z, 1/2 + s_1) E(z, 1/2 + s_2) E(z, 1/2 + s_3)  \dd\mu z   \\
   &=\frac{\xi(1/2 +s_1 +s_2 +s_3) \xi(1/2 +s_1 -s_2 +s_3) \xi(1/2+s_1 +s_2 -s_3) \xi(1/2+s_1 -s_2 -s_3)}{\xi(1+ 2 s_1) \xi(1+ 2 s_2) \xi(1+ 2 s_3)}.   
    \end{aligned}
\end{equation}
\end{lemma}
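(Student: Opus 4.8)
The plan is to prove (\ref{3eisen}) by the Rankin--Selberg unfolding method: unfold one of the three Eisenstein series, reduce the triple integral to the zeroth Fourier coefficient of the product of the other two, and evaluate the resulting Mellin transform by the classical Mellin formula for a product of $K$-Bessel functions together with Ramanujan's divisor-correlation identity. Throughout I write $w_j=\tfrac12+s_j$, so that $E(z,w_j)$ has constant term $y^{w_j}+\frac{\xi(2w_j-1)}{\xi(2w_j)}y^{1-w_j}$ and, for $n\neq0$, the Fourier coefficient $\frac{2\sqrt y}{\xi(2w_j)}|n|^{s_j}\sigma_{-2s_j}(|n|)K_{s_j}(2\pi|n|y)$.

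First I would unfold $E(z,w_1)=\sum_{\gamma\in\Gamma_\infty\backslash\Gamma}(\Im\gamma z)^{w_1}$ against $G(z):=E(z,w_2)E(z,w_3)$. Since $G$ is renormalizable in the sense of (\ref{Phi_def}), Zagier's regularization theory permits unfolding the regularized integral: for $\Re(s_1)$ large it equals the integral of $y^{w_1}G(z)$ over $\Gamma_\infty\backslash\mathbb{H}$, and integrating in $x\in[0,1]$ isolates the zeroth Fourier coefficient $a_0^G(y)$ of $G$, leaving $\int_0^\infty a_0^G(y)\,y^{w_1-2}\dd y$, to be read by meromorphic continuation in $s_1$. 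The coefficient $a_0^G(y)$ splits into two pieces: the product of the two constant terms, which produces the four monomials $y^{1\pm s_2\pm s_3}$ with coefficients built from $\xi(2w_j-1)/\xi(2w_j)$; and the diagonal Fourier sum $\frac{4y}{\xi(2w_2)\xi(2w_3)}\sum_{n\neq0}|n|^{s_2+s_3}\sigma_{-2s_2}(|n|)\sigma_{-2s_3}(|n|)K_{s_2}(2\pi|n|y)K_{s_3}(2\pi|n|y)$. Under the regularization (\ref{reg_subtract_Eisen}) the monomial piece is exactly the power-law part that is discarded, so for generic $s_j$ only the rapidly decaying Bessel sum contributes to the Mellin transform.

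It then remains to evaluate $\int_0^\infty a_0^G(y)y^{w_1-2}\dd y$ on the surviving sum. Combining the $\pm n$ terms and substituting $u=2\pi|n|y$, the $y$-integral becomes the classical identity $\int_0^\infty K_{s_2}(u)K_{s_3}(u)u^{w_1-1}\dd u=\frac{2^{w_1-3}}{\Gamma(w_1)}\prod_{\pm}\prod_{\pm}\Gamma\!\big(\tfrac{w_1\pm s_2\pm s_3}{2}\big)$, whose four Gamma factors are precisely the archimedean parts of $\xi(\tfrac12+s_1\pm s_2\pm s_3)$; the factor $(2\pi|n|)^{-w_1}$ shifts the remaining $n$-sum to $\sum_{n\ge1}\sigma_{-2s_2}(n)\sigma_{-2s_3}(n)\,n^{-(w_1-s_2-s_3)}$, which by Ramanujan's identity equals $\frac{\zeta(\frac12+s_1+s_2+s_3)\zeta(\frac12+s_1-s_2+s_3)\zeta(\frac12+s_1+s_2-s_3)\zeta(\frac12+s_1-s_2-s_3)}{\zeta(1+2s_1)}$. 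Multiplying the Gamma factors against these $\zeta$-values and tracking the powers of $2$ and $\pi$ against the prefactor $4/(\xi(2w_2)\xi(2w_3))$ completes the four numerator zetas into the four $\xi$'s of (\ref{3eisen}), completes $\zeta(1+2s_1)$ into $\xi(1+2s_1)$, and leaves $\xi(1+2s_2)\xi(1+2s_3)$ in the denominator, as claimed.

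The main obstacles are twofold. The first is the rigorous justification of unfolding in the non-rapidly-decaying setting via Zagier's renormalization: one must verify that the monomial part of $a_0^G$ contributes nothing to the regularized value, and that the identity, initially valid only on the small domain where the Dirichlet series and the Bessel integral both converge absolutely, propagates to all $(s_1,s_2,s_3)$ by meromorphic continuation. The second is the clerical but error-prone bookkeeping of the $2$, $\pi$, and $\Gamma$ factors required to assemble the completed $\xi$-functions from the bare $\zeta$-values. A convenient internal consistency check is that the final expression is symmetric in $s_1,s_2,s_3$, confirming that the answer does not depend on which of the three Eisenstein series was unfolded.
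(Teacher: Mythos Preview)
The paper does not supply its own proof of this lemma; it is quoted directly from Zagier \cite{zagier1981rankin}. Your proposed derivation---unfolding one Eisenstein series via Zagier's regularized Rankin--Selberg method, discarding the monomial part of the zeroth Fourier coefficient of the remaining product, and evaluating the surviving Bessel sum by the Weber--Schafheitlin integral together with Ramanujan's identity for $\sum_{n\ge1}\sigma_a(n)\sigma_b(n)n^{-s}$---is exactly the classical argument (and is Zagier's own), and the constants indeed assemble into the stated right-hand side.
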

The regularized Plancherel formula in classical language is the following lemma.
\begin{lemma} [\cite{DJANKOVIC2018236}] \label{Prop_Regular_Planch} Let $F(z)$ and $G(z)$ be renormalizable functions on $\Gamma \backslash \mathbb{H}$ such that $F - \Phi$ and $G - \Psi$ are of rapid decay as $y \rightarrow \infty$, for some $\Phi(y)=\sum_{j=1}^l \frac{c_j}{n_j!} y^{\alpha_j} \log^{n_j} y$ and $\Psi(y)=\sum_{k=1}^{l_1} \frac{d_k}{m_k!} y^{\beta_k} \log^{m_k}y$. Moreover, let $\alpha_j \neq 1$, $\beta_k \neq 1$, ${\rm Re}(\alpha_j) \neq 1/2$, ${\rm Re}(\beta_k) \neq 1/2$, $\alpha_j + \overline{\beta_k} \neq 1$ and $\alpha_j \neq \overline{\beta_k}$, for all $j, k$. Then the following formula holds:
\begin{equation*}
    \begin{aligned}
\langle F(z), G(z) \rangle_{reg}
=&\langle F, \sqrt{3/ \pi}  \rangle_{reg} \langle  \sqrt{3/ \pi} , G  \rangle_{reg} +   \sum_j \langle F , u_j \rangle \langle  u_j, G \rangle  \\
&+ \frac{1}{4 \pi} \int_{-\infty}^{\infty} \langle F, E_{t}\rangle_{reg}  \langle  E_{t}, G \rangle_{reg} \dd t
+ \langle F, \mathcal{E}_{\Psi}  \rangle_{reg}  +  \langle  \mathcal{E}_{\Phi}, G \rangle_{reg}.
    \end{aligned}
\end{equation*}
\end{lemma}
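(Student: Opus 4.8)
The plan is to exploit that the claimed identity is bilinear in the pair $(F,G)$ --- each side is, once one notes that $F\mapsto\mathcal{E}_{\Phi}$ and $G\mapsto\mathcal{E}_{\Psi}$ are linear --- so it suffices to verify it on a spanning set of renormalizable functions. The natural spanning set is $L^{2}(\mathbb{X})$ together with the Eisenstein series $E(z,\alpha)$ with $\operatorname{Re}\alpha\neq1/2$ (and their $s$-derivatives): indeed every renormalizable $F$ decomposes as $F=\mathcal{E}_{\Phi}+F_{0}$ with $\mathcal{E}_{\Phi}$ a combination of such $E(z,\alpha_{j})$ and $F_{0}\in L^{2}(\mathbb{X})$, the latter because no exponent $\alpha_{j}$ sits on the line $\operatorname{Re}(s)=1/2$, so subtracting $\mathcal{E}_{\Phi}$ removes the growth and leaves a remainder whose zeroth Fourier coefficient involves only exponents of real part strictly below $1/2$ (the low exponents of $\Phi$ and the reflected exponents $1-\alpha_{j}$ coming from the constant terms of the $E(z,\alpha_{j})$), hence is square-integrable on the finite-volume surface $\mathbb{X}$. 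The $s$-derivative inputs are then handled by differentiating the established identity in the spectral parameter, both sides being analytic there. Thus two cases remain.

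In the first case $F,G\in L^{2}(\mathbb{X})$, so $\mathcal{E}_{\Phi}=\mathcal{E}_{\Psi}=0$ and the asserted formula is exactly the classical spectral decomposition (Parseval) for $L^{2}(\mathbb{X})$. In the second case $F=E(z,\alpha)$ with $\operatorname{Re}\alpha\neq1/2$. Here $\mathcal{E}_{\Phi}=E(z,\alpha)$ itself --- directly if $\operatorname{Re}\alpha>1/2$, and via the functional equation $E(z,\alpha)=\frac{\xi(2\alpha-1)}{\xi(2\alpha)}E(z,1-\alpha)$ if $\operatorname{Re}\alpha<1/2$. Moreover $\langle E(\cdot,\alpha),u_{j}\rangle=0$ (Eisenstein series are orthogonal to cusp forms) and $\langle E(\cdot,\alpha),1\rangle_{reg}=0$ for $\alpha\neq1$ (the Rankin--Selberg transform $R(E(\cdot,\alpha),s)$ vanishes identically, so it has no residue at $s=1$). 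Feeding these in, the right-hand side of the claimed formula collapses and the whole lemma reduces to the single identity
\[
\frac{1}{4\pi}\int_{\mathbb{R}}\langle E(\cdot,\alpha),E_{t}\rangle_{reg}\,\langle E_{t},G\rangle_{reg}\,\dd t=-\,\langle E(\cdot,\alpha),\mathcal{E}_{\Psi}\rangle_{reg}
\]
for all renormalizable $G$ (with growth $\Psi$) whose exponents are nondegenerate against $\alpha$ in the sense of the hypotheses; note that the choices $G=E(z,\beta)$ and $G\in L^{2}(\mathbb{X})$ are both instances of this.

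This last identity is where the real work lies, and I would prove it with Zagier's regularized Rankin--Selberg machinery. Unfolding expresses $\langle E(\cdot,\alpha),E_{t}\rangle_{reg}$ and $\langle E_{t},G\rangle_{reg}$ as meromorphic functions of $t$ whose poles are simple and located, respectively, at the $t$ with $1/2\pm it=\alpha$ and at the $t$ with $1/2\pm it$ an exponent $\beta_{k}$ of $\Psi$ or its reflection $1-\beta_{k}$; since $\operatorname{Re}\alpha\neq1/2$ and $\operatorname{Re}\beta_{k}\neq1/2$, all these poles are off the real $t$-axis, so the integral is a genuine convergent integral over $\mathbb{R}$. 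Shifting the contour past the poles of the second factor and summing the residues over the exponents of $\Psi$ should assemble exactly $-\langle E(\cdot,\alpha),\mathcal{E}_{\Psi}\rangle_{reg}$, while the shifted integral vanishes (the residual integrand has no poles left and decays); the residue of the Rankin--Selberg transform at $s=1$ supplies the right normalization. The hypotheses $\alpha_{j}\neq\overline{\beta_{k}}$ and $\alpha_{j}+\overline{\beta_{k}}\neq1$ are precisely what keeps the poles of the two factors from ever coinciding, so that no double poles or unexpected contour contributions arise.

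I expect this contour-shift step to be the main obstacle. The continuous-spectrum integral in the statement pairs objects that are not square-integrable, so it is not the one produced by $L^{2}$-Plancherel for the square-integrable remainders, and reconciling the two is not a soft functional-analytic matter: it requires the full analytic-continuation apparatus underlying $\int^{reg}$ --- the meromorphic continuation of $R(\cdot,s)$, the exact location and residues of its poles, and the Maass--Selberg relations for regularized Eisenstein pairings. Everything else --- the bilinearity reduction, the square-integrability of $F_{0},G_{0}$, and the vanishing of the discrete terms when the input is an Eisenstein series --- is comparatively routine bookkeeping.
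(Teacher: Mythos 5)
First, a point of reference: the paper does not prove this lemma at all --- it is quoted verbatim from Djankovi\'c--Khan \cite{DJANKOVIC2018236} --- so your proposal can only be measured against the argument in that source. Your overall architecture is sound and essentially matches it: sesquilinearity of $\langle\cdot,\cdot\rangle_{reg}$, the decomposition $F=F_0+\mathcal{E}_{\Phi}$ with $F_0=F-\mathcal{E}_{\Phi}\in L^2(\mathbb{X})$ (your justification via ${\rm Re}(\alpha_j)\neq 1/2$ is exactly the right observation, since $O(y^{1/2})$ alone would not be square-integrable), ordinary Parseval for $\langle F_0,G_0\rangle$, and the reduction of everything else to the case where one slot is a single Eisenstein series $E(\cdot,\alpha)$.

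The gap is in the step you yourself defer as ``the main obstacle,'' and the issue is not only that it is unexecuted but that the mechanism you propose for it is wrong. With the regularization actually in force here (Zagier's, via $\frac{\pi}{3}\Res_{s=1}R(\cdot,s)$, equations (\ref{Phi_def})--(\ref{reg_subtract_Eisen})), the pairing $\langle E(\cdot,\alpha),E_t\rangle_{reg}$ is \emph{not} a meromorphic function of $t$ with poles at $1/2\pm it=\alpha$: by the regularized Rankin--Selberg/triple-product computation (\ref{3eisen}), $R\bigl(E(\cdot,\alpha)\overline{E_t},s\bigr)$ is a ratio of completed zeta values which is holomorphic at $s=1$ whenever ${\rm Re}\,\alpha\neq 1/2$, so $\langle E(\cdot,\alpha),E_t\rangle_{reg}=0$ for all but at most finitely many real $t$. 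The poles at $1/2\pm it=\alpha$ that you describe belong to the truncated (Maass--Selberg) pairing, which is a different regularization; conflating the two is what leads you to expect a delicate contour shift. Consequently your reduced identity
$\frac{1}{4\pi}\int_{\mathbb{R}}\langle E(\cdot,\alpha),E_t\rangle_{reg}\,\langle E_t,G\rangle_{reg}\,\dd t=-\langle E(\cdot,\alpha),\mathcal{E}_{\Psi}\rangle_{reg}$
is true because \emph{both sides vanish separately}: the left because the first factor of the integrand is identically zero on $\mathbb{R}$, the right because $\langle E(\cdot,\alpha),E(\cdot,\beta)\rangle_{reg}=0$ under the nondegeneracy hypotheses $\alpha\neq\overline{\beta}$, $\alpha+\overline{\beta}\neq 1$ (which are exactly what keeps the numerator $\xi$-factors from acquiring a pole at $s=1$). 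There are no residues to ``assemble''; what is actually needed is the easy vanishing lemma for regularized Eisenstein--Eisenstein pairings, together with the routine consistency check that $\langle F_0,E_t\rangle_{reg}$ coincides with the $L^2$ spectral coefficient of $F_0$. With those two facts substituted for your contour-shift step, your bookkeeping does close the proof.
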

\section{\label{sec:4}The proof of Proposition \ref{Maasscase} and Proposition \ref{Eisensteincase}}
In this section, we will prove Proposition \ref{Maasscase} and Proposition \ref{Eisensteincase}. \par
In the proof of Proposition \ref{Maasscase}. We 
 will use Plancherel formula(regularized case). But since $E(z,1/2 + it)$ is not square integrated, we need consider a small disturbance like \cite{DJANKOVIC2018236}, that is
\[I_{2}   = \lim\limits_{t^{\prime}\rightarrow 0} \langle u_{j}E_{t},E_{t}E_{t+t^{\prime}}\rangle_{reg}.\] 
We calculate an explicit expression of $I_{2}$ in the following Proposition. We will define the the contributions of discrete spectrum, continuous spectrum and regularized term  are $J_{1} , J_{2} , J_{3}$.
\begin{proposition}\label{explicitMaass}We have
\begin{equation*}
    \begin{aligned}
        I_{2} = J_{1} + J_{2} + J_{3}
    \end{aligned}
\end{equation*}
where
\begin{equation*}
    \begin{aligned}
        J_{1} = \sum\nolimits_{k\geq 1}^{\prime}\frac{\rho_{j}(1)\rho_{k}(1)\Lambda(1/2+it , \phi_{k}\times u_{j})}{\xi(1+2it)}\frac{\rho_{t}(1)\rho_{k}(1) \Lambda(1/2-2it ,\phi_{k})\Lambda(\frac{1}{2}, \phi_{k})}{\xi(1+2it)},
    \end{aligned}
\end{equation*}
 $\sum\nolimits_{k\geq 1}^{\prime}$ is over the even Hecke-Maass forms.
\begin{equation*}
    \begin{aligned}
            J_{2} = \frac{1}{4\pi}\int_{\mathbb{R}}&\frac{\rho_{j}(1)\rho_{y}(1)\Lambda(1/2+it+iy,u_{j})\Lambda(1/2+it - iy,u_{j})}{\xi(1+2it)}\\
            &\cdot \frac{\xi(1/2+iy+2it)\xi(1/2+iy)^{2}\xi(1/2+iy+2it)}{\xi(1+2iy)\xi(1+2it)^{2}}\dd y
    \end{aligned}
\end{equation*}
and 
\begin{equation*}
    \begin{aligned}
        J_{3} &= \frac{\rho_{j}(1)\rho_{t}(1)\Lambda(1+3it,u_{j})\Lambda(1+it,u_{j})}{\xi(2 + 4it)} +\frac{\rho_{j}(1)\rho_{t}(1)\Lambda(1-it,u_{j})\Lambda(1-3it,u_{j})}{\xi(2 - 4it)}(\frac{\xi(2it)}{\xi(1+2it)})^{2}\\
        & + \frac{2\rho_{j}(1)\rho_{t}(1)\Lambda(1+it,u_{j})\Lambda(1-it,u_{j})}{\xi(2)}\frac{\xi(2it)}{\xi(1+2it)}
    \end{aligned}
\end{equation*}

\end{proposition}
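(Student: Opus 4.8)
The plan is to apply the regularized Plancherel formula, Lemma \ref{Prop_Regular_Planch}, to the two functions $F(z) = u_j(z)E_t(z)$ and $G(z) = E_t(z)E_{t+t'}(z)$, and then take the limit $t' \to 0$. First I would check that both $F$ and $G$ are renormalizable in the sense of \eqref{Phi_def}: since $u_j$ decays rapidly and $E_t(z) = y^{1/2+it} + \tfrac{\xi(2it)}{\xi(1+2it)}y^{1/2-it} + O(y^{-N})$, the product $u_j E_t$ is already of rapid decay (so $\Phi \equiv 0$ and the regularized term $\langle \mathcal{E}_\Phi, G\rangle_{reg}$ vanishes), while $G = E_t E_{t+t'}$ has constant-term asymptotics given by the four cross-terms $y^{1+i(2t+t')}$, $y^{1-i t'}\cdot(\text{const})$, $y^{1+it'}\cdot(\text{const})$, $y^{1-i(2t+t')}\cdot(\text{const})$, none of whose exponents equals $1$ once $t' \neq 0$ and $t \neq 0$; this is exactly why the small disturbance $t'$ is introduced. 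Thus $\Psi(y)$ is this sum of four power terms, and $\mathcal{E}_\Psi(z)$ is the corresponding linear combination of Eisenstein series $E(z, 1/2 + i(2t+t'))$, $E(z,1/2+it')$, etc. (the exponents with real part $>1/2$, i.e. those $y^{\alpha}$ with $\Re\alpha = 1$, all contribute).

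Next I would identify each of the four pieces in Lemma \ref{Prop_Regular_Planch}. The leading rank-one term $\langle F, \sqrt{3/\pi}\rangle_{reg}\langle\sqrt{3/\pi}, G\rangle_{reg}$ vanishes because $\langle u_j E_t, 1\rangle_{reg}$ involves the constant term of $u_j E_t$, which is $0$ (the constant term of a cusp form against an Eisenstein constant term integrates to zero after regularization — more precisely $u_j E_t$ has zero constant term since $u_j$ does). The discrete-spectrum sum gives $J_1 = \sum_k \langle u_j E_t, \phi_k\rangle\langle\phi_k, E_t E_{t+t'}\rangle$; here I substitute the Rankin–Selberg evaluation $\langle u_j E_t, \phi_k\rangle = \rho_j(1)\rho_k(1)\Lambda(1/2+it,\phi_k\times u_j)/\xi(1+2it)$ and the triple-product evaluation $\langle E_t E_{t+t'}, \phi_k\rangle$, then let $t'\to 0$ to get the stated $J_1$ (note only even $\phi_k$ survive, since $\Lambda(1/2,\phi_k)=0$ for odd forms by the sign of the functional equation — hence the primed sum). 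The continuous-spectrum integral gives $J_2 = \tfrac{1}{4\pi}\int_{\mathbb R}\langle u_j E_t, E_y\rangle_{reg}\langle E_y, E_t E_{t+t'}\rangle_{reg}\,\dd y$; I plug in $\langle u_j E_t, E_y\rangle_{reg} = \rho_j(1)\rho_y(1)\Lambda(1/2+it+iy,u_j)\Lambda(1/2+it-iy,u_j)/\xi(1+2it)$ (the Rankin–Selberg formula for $\langle E_\tau E_t, \phi_k\rangle$ adapted, using $\overline{E_y} = E_{-y}$) and Zagier's formula \eqref{3eisen} for the regularized triple product of three Eisenstein series, then take $t'\to 0$. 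Finally $\langle F, \mathcal{E}_\Psi\rangle_{reg}$ gives $J_3$: this is $\langle u_j E_t, \mathcal{E}_\Psi\rangle_{reg}$, a linear combination (with the explicit constants coming from the cross-terms of $E_t E_{t+t'}$, namely $1$, $\xi(2it)/\xi(1+2it)$, its conjugate, and its square) of regularized Rankin–Selberg integrals $\langle u_j E_t, E(z,1/2+i\nu)\rangle_{reg}$; using the Rankin–Selberg unfolding this produces products $\Lambda(1/2+\cdots)\Lambda(1/2+\cdots)$ of completed $\GL(2)$ $L$-functions with shifted arguments, which after $t'\to 0$ and simplification of the $\xi$-factors become the three terms written for $J_3$. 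The term $\langle\mathcal{E}_\Phi, G\rangle_{reg}$ is absent since $\Phi\equiv 0$.

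The main obstacle is the limit $t' \to 0$: several individual terms in the Plancherel decomposition have poles as $t'\to 0$ (coming from $\xi$-factors in denominators such as $\xi(1+i t')$, or from the pole of $R(G,s)$ colliding), and one must verify that these cancel across the pieces so that the total is finite and equals $I_2 = \langle u_j E_t, E_t^2\rangle_{reg}$. Concretely I would argue, following \cite{DJANKOVIC2018236}, that each piece is meromorphic in $t'$ near $0$, that the sum is regular there because the left-hand side $\langle u_j E_t, E_t E_{t+t'}\rangle_{reg}$ is (being a regularized inner product of fixed renormalizable functions depending holomorphically on $t'$), and hence the limit may be computed termwise after the $\xi$-factor simplifications already reflected in the final formulas for $J_1, J_2, J_3$. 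A secondary technical point is justifying the interchange of the limit with the continuous-spectrum integral in $J_2$, which follows from the rapid decay of $\Lambda(1/2+it\pm iy, u_j)$ in $y$ (Stirling) uniformly for small $t'$.
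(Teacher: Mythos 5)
Your proposal follows essentially the same route as the paper: apply the regularized Plancherel formula to $\langle u_jE_t, E_tE_{t+t'}\rangle_{reg}$, kill the rank-one term via $\langle u_jE_t,1\rangle=0$ and the term $\langle \mathcal{E}_{u_jE_t}, G\rangle_{reg}$ via rapid decay of $u_j$, evaluate the discrete and continuous parts by Rankin--Selberg and Zagier's triple-product formula, read off $J_3$ from the four Eisenstein series in $\mathcal{E}_{E_tE_{t+t'}}$, and pass to the limit $t'\to 0$ (the paper simply observes there is no pole at $t'=0$ here, so your cancellation discussion is more caution than is needed). One small quibble: $\langle u_jE_t,1\rangle=0$ is best justified by unfolding against $E_t$ and using the vanishing of the constant term of $u_j$, not by asserting that the product $u_jE_t$ itself has zero constant term.
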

\begin{proof}Firstly, by Plancherel formula we get
\begin{equation*}
    \begin{aligned}
        I_{2} = &\lim\limits_{t^{\prime} \rightarrow 0} \frac{3}{\pi}\langle u_{j}E_{t} , 1\rangle\langle E_{t}E_{t+t^{\prime}} , 1\rangle_{reg}
         +\sum\limits_{k\geq 1}\langle \phi_{k} , u_{j}E_{t}\rangle\langle \phi_{k},E_{t}E_{t+t^{\prime}}\rangle + \\&\frac{1}{4\pi}\int_{\mathbb{R}}\langle E_{y},u_{j}E_{t}\rangle\langle E_{y} , E_{t}E_{t+t^{\prime}}\rangle_{reg}\dd y
          + \langle u_{j}E_{t} , \mathcal{E}_{E_{t}E_{t+t^{\prime}}}\rangle_{reg} + \langle E_{t}E_{t+t^{\prime}} , \mathcal{E}_{u_{j}E_{t}}\rangle_{reg}.
    \end{aligned}
\end{equation*}
At first, note that the orthogonal property we get $\langle u_{j}E_{t} , 1\rangle$ is zero. And the contribution of discrete spectrum and continuous spectrum is obviously from the Rankin-Selberg method and the triple product formula of Eisenstein series. We only need to calculate the final terms in this expression\par
Because a Hecke-Maass form $u_{j}$ will rapidly decay at the cusp, $\mathcal{E}_{u_{j}E_{t}}$ is zero so $\langle E_{t}E_{t+t^{\prime}} , \mathcal{E}_{u_{j}E_{t}}\rangle$ is zero. From the Fourier expansion of Eisenstein series we get
\begin{equation*}
    \begin{aligned}
\mathcal{E}_{E_{t}E_{t+t^{\prime}}}(z)  = E(z,1+2it+it^{\prime}) &+ \frac{\xi(2it)}{\xi(1+2it)}E(z,1+it^{\prime}) + \frac{\xi(2i(t+t^{\prime}))}{\xi(1+2i(t+t^{\prime}))}E(z,1-it^{\prime})   \\
&+ \frac{\xi(2it)}{\xi(1+2it)}\frac{\xi(2i(t+t^{\prime}))}{\xi(1+2i(t+t^{\prime}))}E(z,1-2it-it^{\prime}).
\end{aligned}
\end{equation*}
Use Rankin-Selberg method again and note that there is no pole at $t^{\prime} = 0$ for these functions. Thus we get the proposition. 
\end{proof}

We assume $t_{j}\leq t^{1- \delta}$ for some small $0<\delta <1/10$. By Stiriling formula and convexity bound we know $J_{3}$ will rapidly decay when $t\rightarrow \infty$. We can prove Theorem \ref{Maasscase} under the two following lemmas.
\begin{lemma}\label{J1}For any $\delta > 0$, $t_{j} \leq t^{1-\delta}$, we get
\[J_{1} \ll_{\varepsilon} \frac{t^{1/2}[t^{4/3}]^{1/4}(t^{4/3} )^{1/4}}{t^{3/2-\varepsilon}} = \frac{1}{t^{1/3-\varepsilon}}.\]
when $t_{j} \leq t^{1/3}$ and 
\[J_{1} \ll_{\varepsilon} \frac{t^{1/2}(tt_{j})^{1/4}(tt_{j} )^{1/4}}{t^{3/2-\varepsilon}} =  \frac{t_{j}^{1/2}}{t^{1/2-\varepsilon}}\]
when $t_{j} \geq t^{1/3}$.
\end{lemma}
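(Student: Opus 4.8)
The plan is to reduce $J_1$ to a weighted sum of (essentially central) $L$-values by carrying out the archimedean analysis, and then to apply Hölder's inequality together with the known hybrid moment estimates.

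\textbf{Step 1 (archimedean reduction).} I would insert $|\rho_k(1)|^2=\cosh(\pi t_k)/(2L(1,\sym^2\phi_k))$, the analogous formulas for $|\rho_j(1)|$ and $|\rho_t(1)|=1/|\xi(1+2it)|$, and $\xi(1+2it)=\pi^{-1/2-it}\Gamma(1/2+it)\zeta(1+2it)$, then write each completed value $\Lambda=L_\infty\cdot L$ and apply Stirling's formula to every $\Gamma$-factor in $L_\infty(1/2+it,\phi_k\times u_j)$, $L_\infty(1/2-2it,\phi_k)$, $L_\infty(1/2,\phi_k)$ and in $\xi(1+2it)^{-3}$. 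All the factors $\cosh(\pi t_k)$, $\cosh(\pi t_j)$, $\cosh(\pi t)$ coming from the $\rho$'s are matched against the exponential decay of these $\Gamma$-factors; a case analysis on the signs of $t\pm t_k\pm t_j$ and $2t-t_k$ (using $t_j\le t^{1-\delta}<t$) shows the net exponential rate is $0$ exactly when $|t_k-t|\le t_j$, and is $\le -\tfrac\pi2(|t_k-t|-t_j)$ outside, so the tail $|t_k-t|>t_j+t^\varepsilon$ contributes $O(t^{-100})$. On the remaining window the polynomial parts of Stirling combine to a clean weight: the two ``large'' $\Gamma$'s in $L_\infty(1/2+it,\phi_k\times u_j)$ (arguments of size $\asymp t$) give $t^{-1/2}$, the two ``small'' ones give $\prod_{\pm}(1+|t-t_k\pm t_j|)^{-1/4}$, each of $L_\infty(1/2-2it,\phi_k)$ and $L_\infty(1/2,\phi_k)$ gives $t^{-1/2}$, while $\zeta(1+2it)^{-3}$, $L(1,\sym^2\phi_k)^{-1}$, $L(1,\sym^2 u_j)^{-1/2}$ are $t^{o(1)}$. (The restriction of $J_1$ to even forms is what makes $\Lambda(1/2,\phi_k)$ the genuine nonzero central value here.) This gives
\[
J_1\ \ll\ t^{-3/2+\varepsilon}\sum_{|t_k-t|\le t_j}\frac{|L(1/2+it,\phi_k\times u_j)|\,|L(1/2-2it,\phi_k)|\,|L(1/2,\phi_k)|}{(1+|t-t_k+t_j|)^{1/4}(1+|t-t_k-t_j|)^{1/4}}.
\]

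\textbf{Step 2 (Hölder).} Writing $W_k:=\prod_{\pm}(1+|t-t_k\pm t_j|)^{-1/4}$, I apply Hölder with exponents $(2,4,4)$, placing all of $W_k$ on the $\phi_k\times u_j$ factor:
\[
\sum_{|t_k-t|\le t_j}\!\!|L(1/2+it,\phi_k\times u_j)|\,|L(1/2-2it,\phi_k)|\,|L(1/2,\phi_k)|\,W_k
\le\Big(\sum\frac{|L(1/2+it,\phi_k\times u_j)|^2}{\prod_{\pm}(1+|t-t_k\pm t_j|)^{1/2}}\Big)^{1/2}\Big(\sum|L(1/2-2it,\phi_k)|^4\Big)^{1/4}\Big(\sum|L(1/2,\phi_k)|^4\Big)^{1/4}.
\]
By Lemma \ref{meanLin} the first factor is $\ll t^{1/2+\varepsilon}$. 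Since $\phi_k$ is real, $|L(1/2-2it,\phi_k)|=|L(1/2+2it,\phi_k)|$, so the second factor is controlled by (\ref{Fourthhybrid}) when $t_j\le t^{1/3}$ and by (\ref{fourthhybridlong}) with $t^{\theta}\asymp t_j$ when $t^{1/3}\le t_j\le t^{1-\delta}$; in both cases $\sum_{|t_k-t|\le t_j}|L(1/2-2it,\phi_k)|^4\ll(t^{4/3}+t t_j)t^{\varepsilon}$. The third factor is the same, via the fourth-moment bound $\sum_{K\le t_k\le K+G}|L(1/2,\phi_k)|^4\ll GK^{1+\varepsilon}$ of Jutila (take $K=t$, $G=\max(t_j,t^{1/3})$), again $\ll(t^{4/3}+t t_j)t^{\varepsilon}$. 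Combining,
\[
J_1\ \ll\ t^{-3/2+\varepsilon}\cdot t^{1/2}\cdot(t^{4/3}+t t_j)^{1/2},
\]
which is $t^{-1/3+\varepsilon}$ when $t_j\le t^{1/3}$ and $t_j^{1/2}t^{-1/2+\varepsilon}$ when $t^{1/3}\le t_j\le t^{1-\delta}$ — precisely the two claimed bounds, and of the announced shape $\frac{t^{1/2}[t^{4/3}]^{1/4}(t^{4/3})^{1/4}}{t^{3/2-\varepsilon}}$, respectively $\frac{t^{1/2}(t t_j)^{1/4}(t t_j)^{1/4}}{t^{3/2-\varepsilon}}$.

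The main obstacle is Step 1: the archimedean bookkeeping needs to be carried out carefully across all sign regimes to confirm both the exponential localization $|t_k-t|\le t_j+O(t^\varepsilon)$ and the exact polynomial weight (in particular the power $t^{-3/2+\varepsilon}$ and the two surviving factors $(1+|t-t_k\pm t_j|)^{-1/4}$, which must line up with the $\tfrac12$-power denominator in Lemma \ref{meanLin}). Once the reduction in Step 1 is in place, Steps 2–4 are a routine Hölder split against off-the-shelf moment estimates.
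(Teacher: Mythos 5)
Your proposal is correct and follows essentially the same route as the paper: the same Stirling/exponential-localization reduction to the weighted sum over $|t_k-t|\le t_j+t^{\varepsilon}$ with weight $t^{-3/2+\varepsilon}\prod_{\pm}(1+|t-t_k\pm t_j|)^{-1/4}$ (the paper's equation (\ref{J1bound})), followed by the same $(1/2,1/4,1/4)$ H\"older split using Lemma \ref{meanLin} for the Rankin--Selberg factor and the Jutila and Jutila--Motohashi fourth-moment bounds (with the length enlarged to $t^{1/3}$ when $t_j\le t^{1/3}$) for the two $\GL(2)$ factors. The archimedean bookkeeping you flag as the main obstacle is exactly what the paper carries out via its piecewise computation of $Q(t_k,t,t_j)$, and your polynomial weight and final exponents match the paper's.
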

\begin{proof}We have
    \begin{equation*}
    \begin{aligned}
        J_{1} &=  \sum\nolimits_{k\geq 1}^{\prime}\frac{\rho_{j}(1)\rho_{k}(1)\Lambda(1/2+it , \phi_{k}\times u_{j})}{\xi(1+2it)}\frac{\rho_{t}(1)\rho_{k}(1) \Lambda(1/2-2it ,\phi_{k})\Lambda(1/2, \phi_{k})}{\xi(1+2it)}\\
        & \ll |\sum\nolimits_{k\geq 1}^{\prime}\frac{L(1/2+it , \phi_{k}\times u_{j})}{\zeta(1+2it)^{3}L(1,\sym^{2}\phi_{k})}\frac{ L(1/2-2it ,\phi_{k})L(1/2, \phi_{k})}{L(1,\sym^{2}u_{j})^{1/2}}H(t_{k},t,t_{j})|\\
    \end{aligned}
\end{equation*}
where $H(t_{k},t,t_{j})$ is
\begin{equation}\label{Gammafactor}
    \begin{aligned}
         & \frac{(\cosh\pi t_{k})(\cosh\pi t_{j})^{1/2}(\cosh\pi t)^{1/2}L_{\infty}(1/2+it , \phi_{k}\times u_{j})L_{\infty}(1/2-2it ,\phi_{k})L_{\infty}(1/2, \phi_{k})}{\zeta_{\infty}(1+2it)^{2}}.\\
         &= \frac{\pi^{2}\prod\limits_{\pm_1}\prod\limits_{\pm_2}\Gamma(\frac{1/2+it \pm_1 it_{k} \pm_2 it_{j}}{2})\prod\limits_{\pm_3}\Gamma(\frac{1/2 - 2it \pm_3 it_{k}}{2})|\Gamma(\frac{1/2+ it_{k}}{2})|^{2}}{|\Gamma(1/2+ it_{k})|^{2}|\Gamma(1/2+ it_{j})||\Gamma(1/2+ it)|^{3}}.
    \end{aligned}
\end{equation}

By Stirling formula we get
\begin{equation*}
    \begin{aligned}
        H(t_{k},t,t_{j}) \asymp |t_{k}|^{-1/2}&\prod_{\pm_1}\prod_{\pm_1}(1 + |t \pm_1 t_{j} \pm_2 t_{k}|)^{-1/4}\prod_{\pm_3 }(1+|2t\pm_3 t_{k}|)^{-1/4}\\
        & \cdot \exp(-\frac{\pi}{2}Q(t_{k},t,t_{j}))
    \end{aligned}
\end{equation*}
where 
\begin{equation*}
    \begin{aligned}
       Q(t_{k},t,t_{j}) = \frac{|t  +t_{k} + t_{j}|}{2} + &\frac{|t + t_{k} - t_{j}|}{2} + \frac{|t -t_{k} + t_{j}|}{2} + \frac{|t  - t_{k} - t_{j}|}{2}+ \frac{|t_{k} +  2t|}{2} + \frac{|t_{k} - 2t|}{2}\\
       & -3t - t_{j} - t_{k} .
    \end{aligned}
\end{equation*}
We calculate $Q(t_{k},t,t_{j})$ in different range of $t_{k}$ and note $t_{j} \leq t^{1-\varepsilon}$. Now
\begin{equation*}
    \begin{aligned}
 Q(t_{k},t,t_{j}) =\left\{	
			\begin{array}{lr}		
				2t_{k} - 3t - t_{j} &\quad t_{k} \geq 2t\\
				t_{k} - t - t_{j}    &\quad t+t_{j} \leq t_{k}\leq 2t\\
                 0  & \quad  t-t_{j}\leq t_{k} \leq  t+t_{j}\\
                 t - t_{j} -t_{k}  &\quad t_{k} \geq t -t_{j}.\\
			\end{array}
			\right.
    \end{aligned}
\end{equation*}
Then we can restrict the sum in $t-t_{j} -t^{\varepsilon} \leq t_{k} \leq t +t_{j} + t^{\varepsilon}$ and the remainder is negligibly small because of  $ Q(t_{k},t,t_{j})$ is exponentially decay in this range. 
Hence, we have
\begin{equation}\label{J1bound}
    \begin{aligned}
      J_{1}
      & \ll_{\varepsilon} \frac{1}{t^{3/2-\varepsilon}}\sum\limits_{|t_{k}-t|\leq t_{j} + t^{\varepsilon}}\frac{|L(1/2+it ,\phi_{k}\times u_{j})L(1/2-2it, \phi_{k})L(1/2,\phi_{k})|}{\prod\limits_{\pm}(1+|t-t_{k} \pm t_{j}|)^{1/4}}.\\
    \end{aligned}
\end{equation}
The estimate of mixed moment is from $(1/2,1/4,1/4)$ H\"older inequality and enlarge the length if $t_{j}\leq t^{1/3}$ when using the fourth moment estimate
\[\sum\limits_{|t_{k}-t|\leq t_{j} + t^{\varepsilon} }|L(1/2-2it, \phi_{k})|^{4} \leq \sum\limits_{|t_{k}-t|\leq t^{1/3+\varepsilon} }|L(1/2-2it, \phi_{k})|^{4} \ll t^{4/3+\varepsilon}\]
and use Lemma \ref{meanLin}.
\end{proof}

\begin{lemma}\label{J2}
    For any $\varepsilon > 0$, $t_{j} \leq t^{1-\varepsilon}$, we get
\[J_{2} \ll_{\varepsilon} \frac{1}{t^{1/2-\varepsilon}}\]
when $t_{j} \leq t^{2/3}$. And when $t^{2/3} \leq t_{j} \leq t^{1-\varepsilon}$ we get $J_{2} \ll \frac{t_{j}}{t^{7/6-\varepsilon}}$.
\end{lemma}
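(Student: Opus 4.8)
The plan is to reduce $J_{2}$, via Stirling's formula, to a short integral of $\GL(2)$ and $\GL(1)$ central-type $L$-values localised near $|y|\approx t$, and then to bound that integral by H\"older's inequality fed with a $t$-aspect subconvex bound for $L(1/2+2it+iv,u_{j})$ and the fourth moment of $\zeta$, in close parallel with the treatment of $J_{1}$ in Lemma~\ref{J1}.

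First I would start from the explicit formula for $J_{2}$ in Proposition~\ref{explicitMaass}, pass to absolute values inside the $y$-integral, write every $\Lambda=L_{\infty}L$ and every $\xi=\zeta_{\infty}\zeta$, and substitute $|\rho_{j}(1)|^{2}=\tfrac{\cosh\pi t_{j}}{2L(1,\sym^{2}u_{j})}$ and $|\rho_{y}(1)|^{2}=\tfrac{\cosh\pi y}{|\zeta(1+2iy)|^{2}}$. Applying Stirling to each $\Gamma$- and $\zeta_{\infty}$-factor, the exponential parts should collapse into a single factor $e^{-\pi\mathcal{Q}(y)}$ where, by a case analysis analogous to the computation of $Q(t_{k},t,t_{j})$ in the proof of Lemma~\ref{J1}, one has $\mathcal{Q}(y)\geq 0$, $\mathcal{Q}(y)=0$ precisely on $t-t_{j}\leq|y|\leq t+t_{j}$, and $\mathcal{Q}(y)\gg\operatorname{dist}\!\bigl(|y|,[t-t_{j},t+t_{j}]\bigr)$ otherwise. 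Thus the part of the integral with $\bigl||y|-t\bigr|>t_{j}+t^{\varepsilon}$ contributes $O_{A}(t^{-A})$, and on the rest I would set $y=t+v$, $|v|\leq t_{j}+t^{\varepsilon}$ (the mirror range $y=-(t+v)$ being identical by the evenness of the modulus of the integrand). Collecting the polynomial output of Stirling together with $|\zeta(1+2iw)|^{-1},\,L(1,\sym^{2}u_{j})^{-1}\ll\log t$, the exponentials cancel on this range (using $e^{-\pi\mathcal{Q}(y)}\leq 1$) and I expect to reach
\begin{equation}\label{J2bound}
|J_{2}|\ll_{\varepsilon}t^{-A}+\frac{1}{t^{3/2-\varepsilon}}\int_{|v|\leq t_{j}+t^{\varepsilon}}\frac{\bigl|L(1/2+2it+iv,u_{j})\,L(1/2+iv,u_{j})\bigr|\,\bigl|\zeta(1/2+3it+iv)\,\zeta(1/2-it+iv)\bigr|\,\bigl|\zeta(1/2+it+iv)\bigr|^{2}}{(1+|v+t_{j}|)^{1/4}(1+|v-t_{j}|)^{1/4}}\,\dd v.
\end{equation}

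Next I would estimate the integral in (\ref{J2bound}). Since $t_{j}\leq t^{1-\varepsilon}$, the analytic conductor of $L(1/2+2it+iv,u_{j})$ is $(1+|2t+v+t_{j}|)(1+|2t+v-t_{j}|)\asymp t^{2}$ with \emph{no} conductor dropping (the displacement $2t+v\asymp t$ is of strictly larger order than $t_{j}$), so (\ref{hybridsub}), applied with $\phi_{k}=u_{j}$ and displacement $2t+v$, gives the Weyl-quality bound $L(1/2+2it+iv,u_{j})\ll t^{1/3+\varepsilon}$; I pull this out. The convexity bound $L(1/2+iv,u_{j})\ll\bigl((1+|v+t_{j}|)(1+|v-t_{j}|)\bigr)^{1/4+\varepsilon}$ shows $L(1/2+iv,u_{j})$ divided by the weight in (\ref{J2bound}) is $\ll t^{\varepsilon}$ — the weight precisely neutralising the conductor drop of $L(1/2+iv,u_{j})$ at $v=\pm t_{j}$. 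To the remaining three $\zeta$-factors I would apply H\"older with exponents $(4,4,2)$; each resulting mean value runs over an interval of length $\leq 2(t_{j}+t^{\varepsilon})$ centred at $3t$, $-t$, $t$, all at height $\asymp t$, so the fourth moment estimate $\int_{T}^{T+T^{2/3}}|\zeta(1/2+iu)|^{4}\,\dd u\ll T^{2/3+\varepsilon}$ (subdivided into windows of length $t^{2/3}$ when $t_{j}>t^{2/3}$) bounds them by $\max(t_{j},t^{2/3})^{1/4+\varepsilon}$, $\max(t_{j},t^{2/3})^{1/4+\varepsilon}$ and $\max(t_{j},t^{2/3})^{1/2+\varepsilon}$ respectively. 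Multiplying, the integral is $\ll t^{1/3+\varepsilon}\max(t_{j},t^{2/3})$, so that
\[|J_{2}|\ll t^{-3/2+\varepsilon}\cdot t^{1/3+\varepsilon}\max(t_{j},t^{2/3})=t^{-7/6+\varepsilon}\max(t_{j},t^{2/3}),\]
which is $\ll t^{-1/2+\varepsilon}$ when $t_{j}\leq t^{2/3}$ and $\ll t_{j}t^{-7/6+\varepsilon}$ when $t^{2/3}\leq t_{j}\leq t^{1-\varepsilon}$.

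The step I expect to be most delicate is the Stirling reduction: one must assemble a sizeable collection of $\Gamma$-factors so that the exponentials cancel \emph{exactly} on $t-t_{j}\leq|y|\leq t+t_{j}$ and extract the sharp polynomial saving $t^{-3/2}$ in (\ref{J2bound}), since a slip there shifts the final exponent. A secondary point worth care is the contrasting role of conductor dropping: $L(1/2+iv,u_{j})$ does drop near $v=\pm t_{j}$, where only convexity is available and the weight must absorb the loss, whereas its companion $L(1/2+2it+iv,u_{j})$ does not, so the full $t^{1/3+\varepsilon}$ subconvexity may be used there without loss.
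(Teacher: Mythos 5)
Your proof is correct and follows essentially the same route as the paper: Stirling localises the integral to $|y-t|\le t_j+t^{\varepsilon}$ with prefactor $t^{-3/2+\varepsilon}$, the non-degenerate $\GL(2)$ factor $L(1/2+2it+iv,u_j)$ is bounded by the hybrid Weyl bound $t^{1/3+\varepsilon}$ from (\ref{hybridsub}), the conductor-dropped factor is absorbed by the archimedean weight via convexity, and the four zeta values are handled by Iwaniec's fourth moment of $\zeta$ in windows of length $t^{2/3}$, giving $\max(t_j,t^{2/3})$ exactly as in the two stated ranges. Your explicit $(4,4,2)$ H\"older step is simply a cleaner justification of the step the paper records only ``roughly'' as $\int|\zeta(1/2+iy)|^{4}\,\dd y$, so there is no substantive difference in approach.
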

\begin{proof}Similarly,  in fact, the Gamma factor contributes
\begin{equation}
    \begin{aligned}
        H(y,t,t_{j}) &= \frac{(\cosh\pi t_{j})^{1/2}(\cosh\pi y)^{1/2}L_{\infty}(1/2+iy + it ,  u_{j})L_{\infty}(1/2+it -iy ,u_{j})}{\zeta_{\infty}(1+2iy)\zeta_{\infty}(1+2it)^{3}}\\
         & \quad \cdot \zeta_{\infty}(1/2+iy+2it)\zeta_{\infty}(1/2+iy)^{2}\zeta_{\infty}(1/2+iy-2it)\\
         &= \frac{\pi\prod\limits_{\pm_1}\prod\limits_{\pm_2}\Gamma(\frac{1/2+it \pm_1 iy \pm_2 it_{j}}{2})\prod\limits_{\pm_3}\Gamma(\frac{1/2 - 2it \pm_3 iy}{2})|\Gamma(\frac{1/2+ iy}{2})|^{2}}{|\Gamma(1/2+ iy)|^{2}|\Gamma(1/2+ it_{j})||\Gamma(1/2+ it)|^{3}}.
    \end{aligned}
\end{equation}
Note  that $H(y,t,t_{j})$ is the same as (\ref{Gammafactor}) if we replace $t_{k}$ to $y$. We get 
 \begin{equation}\label{J2bound}
        \begin{aligned}
            J_{2}\ll_{\varepsilon} \frac{1}{t^{3/2-\varepsilon}}\int_{|y - t|\leq t_{j} +t^{\varepsilon}}&\frac{|L(\frac{1}{2}+iy+it, u_{j})L(\frac{1}{2}+it - iy, u_{j})|}{\prod\limits_{\pm}(1+|t - y \pm t_{j}|)^{1/4}}\\
            & \cdot|\zeta(\frac{1}{2}+iy+2it)\zeta(\frac{1}{2}+iy)^{2}\zeta(\frac{1}{2}+iy-2it)|\dd y.
        \end{aligned}
    \end{equation}
Now we use hybrid bound (\ref{hybridsub}) of $L(\frac{1}{2}+it+iy,u_{j})$we roughly obtain
\begin{equation*}
    \begin{aligned}
        J_{2} \ll_{\varepsilon} \frac{t^{\varepsilon}t^{1/3}}{t^{3/2}}
    &\cdot\int_{|y - t|\leq t_{j} +t^{\varepsilon}}|\zeta(\frac{1}{2}+iy)|^{4}\dd y\\
    & \ll \frac{t^{\varepsilon}t^{1/3}}{t^{3/2}}t^{2/3} =\frac{1}{t^{1/2-\varepsilon}}
    \end{aligned}
\end{equation*}
when $t_{j} \leq t^{2/3}$. And when $t^{2/3} \leq t_{j} \leq t^{1-\varepsilon}$ we get $J_{2} \ll \frac{t_{j}}{t^{7/6-\varepsilon}}$ easily.
\end{proof}

In the proof of Proposition \ref{Eisensteincase} we  need an explicit expression of 
\[I_{3}  = \lim\limits_{t^{\prime} \rightarrow 0}\langle E_{\tau}E_{t}, E_{t}E_{t+t^{\prime}}\rangle_{reg}.\]
The method is completely the same as the proof of Proposition \ref{Maasscase} above. We only list the auxiliary results like Proposition \ref{explicitMaass}, Lemma \ref{J1} and Lemma \ref{J2}.
\begin{proposition}\label{explicitEisen}We have
\begin{equation*}
    \begin{aligned}
        I_{3}  = K_{1} + K_{2} + K_{3}
    \end{aligned}
\end{equation*}
where 

\begin{equation*}
    \begin{aligned}
        K_{1} = \sum\nolimits_{k\geq 1}^{\prime}\frac{\rho_{k}(1)^{2}\rho_{t}(1)^{2}\Lambda(1/2-i\tau-it,\phi_{k})\Lambda(1/2+i\tau-it,\phi_{k})\Lambda(1/2+2it,\phi_{k})\Lambda(1/2,\phi_{k})}{\xi(1+2i\tau)\xi(1+2it)},
    \end{aligned}
\end{equation*}
 $\sum\nolimits_{k\geq 1}^{\prime}$ is over the even Hecke-Maass forms.
\begin{equation*}
    \begin{aligned}
            K_{2} = \frac{1}{4\pi}\int_{\mathbb{R}}&\frac{\prod\limits_{\pm_1}\prod\limits_{\pm_2}\xi(1/2+iy\pm_1 it\pm_2 i\tau)}{\xi(1+2iy)\xi(1+2it)\xi(1+2i\tau)}\\
            & \cdot\frac{\xi(1/2+iy+2it)\xi(1/2+iy-2it)\xi(1/2+iy)^{2}}{\xi(1+2iy)\xi(1+2it)^{2}}\dd y,
    \end{aligned}
\end{equation*}
and 
\begin{equation*}
    \begin{aligned}
        K_{3} &= \frac{\xi(1+i\tau +3it)\xi(1+i\tau +it)\xi(i\tau -it)\xi(i\tau -3it)}{\xi(1+2i\tau)\xi(1+2it)\xi(2+4it)} \\
        &+ \frac{2\xi(2it)}{\xi(1+2it)}\frac{\xi(1+i\tau +it)\xi(1+i\tau -it)\xi(i\tau +it)\xi(i\tau -it)}{\xi(1+2i\tau)\xi(1+2it)\xi(2)}\\
         &+ (\frac{\xi(2it)}{\xi(1+2it)})^{2}\frac{\xi(1+i\tau -it)\xi(1+i\tau -3it)\xi(i\tau +3it)\xi(i\tau +it)}{\xi(1+2i\tau)\xi(1+2it)\xi(2-4it)}.
    \end{aligned}
\end{equation*}
\end{proposition}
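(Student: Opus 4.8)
The argument is the exact analogue of the proof of Proposition \ref{explicitMaass}, now with the cuspidal factor $u_j$ there replaced by the Eisenstein factor $E_\tau$; as there, one works with the auxiliary parameter, reading $I_3=\lim_{t'\to 0}\langle E_\tau E_t, E_tE_{t+t'}\rangle_{reg}$, which for $t'\neq 0$ falls within the scope of the regularized Plancherel formula. First I would record the renormalizing data and check the hypotheses of Lemma \ref{Prop_Regular_Planch}. By the Fourier expansion of $E(z,1/2+it)$ in \S\ref{sec:2} its zero-th coefficient is $y^{1/2+it}+\frac{\xi(2it)}{\xi(1+2it)}y^{1/2-it}$, so $F=E_\tau E_t$ is renormalizable with $\Phi$ the sum of the four monomials $y^{1\pm i\tau\pm it}$, and $G=E_tE_{t+t'}$ is renormalizable with $\Psi$ the sum of $y^{1\pm i(2t+t')}$, $\frac{\xi(2it)}{\xi(1+2it)}y^{1+it'}$ and $\frac{\xi(2i(t+t'))}{\xi(1+2i(t+t'))}y^{1-it'}$, the products of the Bessel parts contributing only to terms of the zero-th coefficient that decay as $y\to\infty$. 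The associated Eisenstein combinations are $\mathcal{E}_{E_\tau E_t}=E(z,1+i\tau+it)+\frac{\xi(2it)}{\xi(1+2it)}E(z,1+i\tau-it)+\cdots$ and $\mathcal{E}_{E_tE_{t+t'}}=E(z,1+2it+it')+\frac{\xi(2it)}{\xi(1+2it)}E(z,1+it')+\cdots$, the latter being exactly the four-term expression already written in the proof of Proposition \ref{explicitMaass}. For fixed $t,\tau\neq 0$ with $\tau\pm t\neq 0$ and $t'$ small and nonzero, none of the relevant exponents equals $1$, all have real part $1\neq 1/2$, every $\alpha_j+\overline{\beta_k}$ has real part $2\neq 1$, and $\alpha_j\neq\overline{\beta_k}$ for generic $t,\tau$, so Lemma \ref{Prop_Regular_Planch} applies.

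Next I would identify and evaluate the five terms that Lemma \ref{Prop_Regular_Planch} produces. The constant term vanishes: by Zagier's formula (\ref{3eisen}), $\int_{\mathbb{X}}^{reg}E(z,1/2+i\tau)E(z,1/2+it)E(z,s)\dd\mu z$ is a ratio of $\xi$-functions holomorphic at $s=1$, while $E(z,s)$ has a simple pole there with constant residue, which forces $\int_{\mathbb{X}}^{reg}E_\tau E_t\dd\mu z=0$ (equivalently, the regularized integral of a product of two distinct Eisenstein series off the locus $s_1+s_2=1$ vanishes). The discrete-spectrum term is computed from the Rankin--Selberg identities of \S\ref{sec:3}: $\langle E_\tau E_t,\phi_k\rangle=\rho_k(1)\rho_t(1)\Lambda(1/2+i\tau+it,\phi_k)\Lambda(1/2+i\tau-it,\phi_k)/\xi(1+2i\tau)$, while the companion factor tends as $t'\to 0$ to $\rho_k(1)\rho_t(1)\Lambda(1/2+2it,\phi_k)\Lambda(1/2,\phi_k)/\xi(1+2it)$; both vanish for odd $\phi_k$ (as $E_\tau E_t$ is even), so the sum runs over even forms, and after applying $\overline{\xi(s)}=\xi(\bar s)$ and $\xi(s)=\xi(1-s)$ one obtains $K_1$. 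The continuous-spectrum term is evaluated by applying (\ref{3eisen}) to each of its two regularized triple products and then letting $t'\to 0$, which gives $K_2$. Finally the two regularized boundary terms $\langle E_\tau E_t,\mathcal{E}_{E_tE_{t+t'}}\rangle_{reg}+\langle\mathcal{E}_{E_\tau E_t},E_tE_{t+t'}\rangle_{reg}$ become, once the $\mathcal{E}$'s are expanded, finite combinations of triple products of Eisenstein series; each of these is again evaluated by (\ref{3eisen}), and passing to the limit $t'\to 0$ produces $K_3$.

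The delicate step --- and the one I expect to be the main obstacle --- is precisely this last limit, together with the bookkeeping of the many $\xi$-factors, conjugates, and functional equations. Two of the Eisenstein series occurring in $\mathcal{E}_{E_tE_{t+t'}}$, namely $E(z,1+it')$ and $E(z,1-it')$, acquire a simple pole as $t'\to 0$ from $\operatorname{Res}_{s=1}E(z,s)=\tfrac{1}{2\xi(2)}$; but their coefficients $\tfrac{\xi(2it)}{\xi(1+2it)}$ and $\tfrac{\xi(2i(t+t'))}{\xi(1+2i(t+t'))}$ differ by $O(t')$, so the polar parts cancel and the surviving contribution is $2\,\tfrac{\xi(2it)}{\xi(1+2it)}\int_{\mathbb{X}}^{reg}E_\tau E_t\,\mathcal{E}^{\ast}\,\dd\mu z$ with $\mathcal{E}^{\ast}(z)=\lim_{s\to 1}\big(E(z,s)-\tfrac{1/(2\xi(2))}{s-1}\big)$; evaluating this as the value at $s=1$ of the meromorphic function supplied by (\ref{3eisen}) --- finite precisely because $\int_{\mathbb{X}}^{reg}E_\tau E_t\dd\mu z=0$ --- yields the middle term of $K_3$, the $\xi(2)$ in its denominator being exactly this residue, while the two non-polar Eisenstein series $E(z,1\pm 2it+it')$ give the outer terms of $K_3$. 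All remaining steps are mechanical transcriptions of the corresponding steps in the proof of Proposition \ref{explicitMaass}.
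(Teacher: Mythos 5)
Your proposal is correct and follows exactly the route the paper intends: its own proof of this proposition is the single line ``Similarly as the proof of Lemma \ref{explicitMaass}'', i.e.\ regularized Plancherel applied to $\lim_{t'\to 0}\langle E_\tau E_t, E_tE_{t+t'}\rangle_{reg}$, Rankin--Selberg for the discrete spectrum, and Zagier's formula (\ref{3eisen}) for the continuous and boundary terms. You in fact supply more detail than the paper, and you correctly isolate the one point where the Eisenstein case differs from the cuspidal one --- the potential pole of $\langle E_\tau E_t, E(z,1\pm it')\rangle_{reg}$ at $t'=0$ --- and resolve it by the right mechanism, namely $\int_{\mathbb{X}}^{reg}E_\tau E_t\,\dd\mu z=0$ (each term is then individually regular at $t'=0$, so the earlier ``cancellation of polar parts between the two coefficients'' is not actually needed).
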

\begin{proof}
Similarly as the proof of Lemma \ref{explicitMaass}.
\end{proof}
Follow the next two lemma we can prove Theorem \ref{Eisensteincase}.
\begin{lemma}For any $\varepsilon > 0$, $\tau \leq t^{1-\varepsilon}$, we get
\begin{equation}\label{K1bound}
    \begin{aligned}
K_{1}
      & = \frac{1}{t^{3/2-\varepsilon}}\sum\limits_{|t_{k}-t|\leq \tau + t^{\varepsilon}}\frac{|L(1/2 -it  ,\phi_{k}\times E_{\tau})L(1/2-2it, \phi_{k})L(1/2,\phi_{k})|}{\prod\limits_{\pm}(1+|t -  t_{k} \pm \tau|)^{1/4}}
    \end{aligned}.
\end{equation}
\end{lemma}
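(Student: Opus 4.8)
The plan is to mimic the treatment of $J_1$ in Lemma \ref{J1}, since $K_1$ has the same analytic shape once the product of two $\GL(2)$ $L$-values is recognized as a single $\GL(2)\times E_\tau$ Rankin--Selberg value. First I would take the closed formula for $K_1$ furnished by Proposition \ref{explicitEisen} and apply the factorization $L(s+i\tau,\phi)L(s-i\tau,\phi)=L(s,E_\tau\times\phi)$ with $s=1/2-it$, which converts the product $\Lambda(1/2-i\tau-it,\phi_k)\,\Lambda(1/2+i\tau-it,\phi_k)$ into $\Lambda(1/2-it,\phi_k\times E_\tau)$. Passing to absolute values, I would then split each completed $L$-function (and each $\xi$) into its archimedean Gamma factor and its Dirichlet-series part, insert $|\rho_k(1)|^2=\cosh(\pi t_k)/(2L(1,\sym^2\phi_k))$, $|\rho_t(1)|^2=\cosh(\pi t)/|\zeta(1+2it)|^2$ and $|\xi(1+2i\tau)|^{-1}=(\cosh\pi\tau)^{1/2}/|\zeta(1+2i\tau)|$, and collect all of the $\Gamma$'s together with the $\cosh$-factors into a single archimedean factor $H(t_k,t,\tau)$. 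The key observation is that this $H(t_k,t,\tau)$ is literally the function in (\ref{Gammafactor}) with $t_j$ replaced by $\tau$: the archimedean factor of $L(s,E_\tau\times\phi_k)$ has the same shape as that of the Rankin--Selberg convolution $L(s,\phi_k\times u_j)$ under $t_j\mapsto\tau$; the remaining factors $\Lambda(1/2+2it,\phi_k)$ and $\Lambda(1/2,\phi_k)$ are exactly those appearing in $J_1$ (using $|L(1/2+2it,\phi_k)|=|L(1/2-2it,\phi_k)|$, valid because $\phi_k$ has real Hecke eigenvalues); and $|\xi(1+2i\tau)|^{-1}$ supplies the factor $(\cosh\pi\tau)^{1/2}$ in precisely the way $\rho_j(1)$ supplied $(\cosh\pi t_j)^{1/2}$. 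The finite parts left over, namely $\zeta(1+2it)^{-3}$, $\zeta(1+2i\tau)^{-1}$ and $L(1,\sym^2\phi_k)^{-1}$, are each $\ll t^\varepsilon$ by the lower bounds recalled in \S\ref{sec:2}. One is thereby reduced to estimating
\[
K_1\ \ll\ t^\varepsilon\,\Bigl|\sum\nolimits_{k\geq1}^{\prime} L(1/2-it,\phi_k\times E_\tau)\,L(1/2-2it,\phi_k)\,L(1/2,\phi_k)\,H(t_k,t,\tau)\Bigr|.
\]

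Next I would quote the Stirling computation from the proof of Lemma \ref{J1} essentially verbatim: it yields
\[
H(t_k,t,\tau)\ \asymp\ (1+|t_k|)^{-1/2}\prod_{\pm_1}\prod_{\pm_2}(1+|t\pm_1\tau\pm_2 t_k|)^{-1/4}\prod_{\pm_3}(1+|2t\pm_3 t_k|)^{-1/4}\exp\!\Bigl(-\tfrac{\pi}{2}Q(t_k,t,\tau)\Bigr),
\]
with the same function $Q$, which vanishes for $t-\tau\le t_k\le t+\tau$ and is strictly positive — hence $\exp(-\tfrac{\pi}{2}Q)$ is exponentially small — once $|t_k-t|\ge\tau+t^\varepsilon$. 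Thus the spectral sum can be truncated to $|t_k-t|\le\tau+t^\varepsilon$ with negligible error, and on this range (recall $\tau\le t^{1-\varepsilon}$) one has $|t_k|\asymp t$ and $|2t\pm t_k|\asymp|t+t_k\pm\tau|\asymp t$, so that the factors $(1+|t_k|)^{-1/2}$, $\prod_{\pm_3}(1+|2t\pm_3 t_k|)^{-1/4}$ and $(1+|t+t_k+\tau|)^{-1/4}(1+|t+t_k-\tau|)^{-1/4}$ together contribute $t^{-3/2}$, while only $(1+|t-t_k+\tau|)^{-1/4}(1+|t-t_k-\tau|)^{-1/4}$ remains inside the sum. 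Collecting these contributions yields exactly the asserted bound (\ref{K1bound}).

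The only genuinely non-mechanical step is the archimedean bookkeeping in the first paragraph: one has to check that the four $\Gamma$-quotients coming from $\Lambda(1/2\mp it\pm i\tau,\phi_k)$, $\Lambda(1/2+2it,\phi_k)$ and $\Lambda(1/2,\phi_k)$, together with the three $\cosh$-factors extracted from $|\rho_k(1)|^2|\rho_t(1)|^2|\xi(1+2i\tau)|^{-1}$, reassemble into exactly the expression (\ref{Gammafactor}) under the substitution $t_j\mapsto\tau$. Once this matching is in place, the truncation of the sum, the evaluation of $Q$, and the extraction of the power $t^{-3/2+\varepsilon}$ are identical to the computation already carried out for $J_1$, so nothing further is needed.
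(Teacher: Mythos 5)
Your proposal is correct and follows essentially the same route as the paper, whose own proof of this lemma is simply the remark that it proceeds "similarly as the proof of Lemma \ref{J1}"; you have supplied exactly the bookkeeping that remark leaves implicit (the identification $\Lambda(1/2-i\tau-it,\phi_k)\Lambda(1/2+i\tau-it,\phi_k)=\Lambda(1/2-it,\phi_k\times E_\tau)$, the matching of the archimedean factor with (\ref{Gammafactor}) under $t_j\mapsto\tau$, the truncation via $Q$, and the extraction of $t^{-3/2+\varepsilon}$). The only cosmetic point is that the stated equality in (\ref{K1bound}) should be read as an upper bound $\ll$, which is what both you and the paper actually use.
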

As the same argument in Lemma \ref{J1}. We get 
\[K_{1} \ll_{\varepsilon} \frac{t^{1/2}[t^{4/3}]^{1/4}(t^{4/3} )^{1/4}}{t^{3/2-\varepsilon}} = \frac{1}{t^{1/3-\varepsilon}}.\]
when $\tau\leq t^{1/3-\varepsilon}$.\\
By the same method when $\tau \geq t^{1/3-\varepsilon}$, we get
\[K_{1} \ll_{\varepsilon} \frac{t^{1/2}(t\tau)^{1/4}(t\tau )^{1/4}}{t^{3/2-\varepsilon}} =  \frac{\tau^{1/2}}{t^{1/2-\varepsilon}}.\]
\begin{proof}
    Similarly as the proof of Lemma \ref{J1}.
\end{proof}
\begin{lemma}
     For any $\varepsilon > 0$, $\tau \leq t^{1-\varepsilon}$, we get
    \begin{equation}\label{K2bound}
        \begin{aligned}
            K_{2}\ll_{\varepsilon} t^{\varepsilon}\int_{|y - t|\leq \tau +t^{\varepsilon}}&\frac{|\prod\limits_{\pm_1}\prod\limits_{\pm_2}\zeta(1/2+iy\pm it\pm i\tau)|}{|y|^{1/2}\prod\limits_{\pm_1}(1+|2t\pm_1 y|)^{1/4}\prod\limits_{\pm_2}\prod\limits_{\pm_3}(1+|t\pm_2 y \pm_3 \tau|)^{1/4}}\\ &\cdot|\zeta(1/2+iy+2it)\zeta(1/2+iy)^{2}\zeta(1/2+iy-2it)|\dd y.
        \end{aligned}
    \end{equation}
    If $\tau \leq t^{2/3}$ then
    \[K_{2} \ll \frac{1}{t^{11/21-\varepsilon}}\]
    and when $ t^{2/3} \leq\tau \leq t^{1-\varepsilon}$  we get
    \[K_{2} \ll \frac{\tau}{t^{25/21-\varepsilon}}.\]
\end{lemma}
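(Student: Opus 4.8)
The plan is to imitate the proofs of Lemma \ref{J1} and Lemma \ref{J2}. First I would deduce the integral bound (\ref{K2bound}) from the explicit formula for $K_2$ in Proposition \ref{explicitEisen}. Writing each complete zeta value as $\xi(s)=\zeta_\infty(s)\zeta(s)$ and applying Stirling's formula to the $\Gamma$-factors, the $y$-dependent exponential part of the integrand becomes $\exp\!\bigl(-\tfrac{\pi}{4}\widetilde Q(y)\bigr)$, where
\[
\widetilde Q(y)=\sum_{\pm_1}\sum_{\pm_2}\bigl|y\pm_1 t\pm_2\tau\bigr|+|y+2t|+|y-2t|-2|y|-6t-2\tau .
\]
A case analysis in $y$ parallel to the computation of $Q$ in Lemma \ref{J1} (and using $\tau<t^{1-\varepsilon}<t$) shows that $\widetilde Q(y)=0$ precisely when $|y|\in[t-\tau,t+\tau]$ and that $\widetilde Q(y)$ grows at least linearly in the distance of $|y|$ from that interval. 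Since all zeta values are polynomially bounded, the part of the integral with $|y-t|>\tau+t^\varepsilon$ and $|y+t|>\tau+t^\varepsilon$ is negligible, and, the modulus of the integrand being invariant under $y\mapsto-y$, it suffices to integrate over $|y-t|\le\tau+t^\varepsilon$. Bounding the polynomial Stirling factors exactly, and using that $\zeta(1+2iy),\zeta(1+2it),\zeta(1+2i\tau)$ and their reciprocals are $\ll t^\varepsilon$, gives (\ref{K2bound}).

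Next I would estimate the integral in (\ref{K2bound}). On $|y-t|\le\tau+t^\varepsilon$ we have $|y|\asymp t$, $|2t\pm y|\asymp t$ and $|t+y\pm\tau|\asymp t$ (again $\tau<t^{1-\varepsilon}$), so the denominator in (\ref{K2bound}) is $\gg t^{3/2}(1+|y-t+\tau|)^{1/4}(1+|y-t-\tau|)^{1/4}$. I would split the eight zeta factors of the numerator into three groups. First, $\zeta(1/2+i(y-t+\tau))$ and $\zeta(1/2+i(y-t-\tau))$ have height $\le2\tau+t^\varepsilon$, and since $\zeta(1/2+iu)\ll(1+|u|)^{13/84+\varepsilon}\ll(1+|u|)^{1/4}$ they are absorbed by the remaining two denominator factors. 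Second, $\zeta(1/2+i(y+t+\tau))$ and $\zeta(1/2+i(y+t-\tau))$ have height $\asymp t$; bounding them by the subconvexity estimate $\zeta(1/2+iu)\ll|u|^{13/84+\varepsilon}$ contributes $t^{13/42+\varepsilon}$. Third, the factors $\zeta(1/2+i(y+2t))$, $\zeta(1/2+i(y-2t))$ and $\zeta(1/2+iy)^2$ are kept inside the integral.

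For the last four factors I would use the fourth moment bound $\int_T^{T+T^{2/3}}|\zeta(1/2+iu)|^4\,\dd u\ll T^{2/3+\varepsilon}$. Each such factor is of the form $\zeta(1/2+i\ell(y))$ for an affine $\ell$ of slope $\pm1$ with $|\ell(y)|\asymp t$, so after the change of variables $u=\ell(y)$ and covering the interval of length $\asymp\tau+t^\varepsilon$ by $\ll1+\tau t^{-2/3}$ subintervals of length $t^{2/3}$, one obtains $\int|\zeta(1/2+i\ell(y))|^4\,\dd y\ll\max(t^{2/3},\tau)\,t^\varepsilon$. Two applications of the Cauchy--Schwarz inequality then give
\[
\int_{|y-t|\le\tau+t^\varepsilon}\bigl|\zeta(1/2+i(y+2t))\,\zeta(1/2+i(y-2t))\bigr|\,|\zeta(1/2+iy)|^{2}\,\dd y\ll\max(t^{2/3},\tau)\,t^\varepsilon .
\]
Collecting the three contributions together with the denominator $t^{3/2}$ gives
\[
K_2\ll t^\varepsilon\cdot t^{-3/2}\cdot t^{13/42}\cdot\max(t^{2/3},\tau),
\]
which is $t^{-11/21+\varepsilon}$ for $\tau\le t^{2/3}$ and $\tau\,t^{-25/21+\varepsilon}$ for $t^{2/3}\le\tau\le t^{1-\varepsilon}$, as claimed.

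The difficulty is bookkeeping rather than a new idea: the Stirling computation must be carried out precisely enough to identify the localization window $[t-\tau,t+\tau]$, and then the eight zeta factors have to be allocated so that nothing is wasted. Since only a fourth moment for $\zeta$ is available, at most four of them can be fed into it (as two pairs, via Cauchy--Schwarz), which forces the two remaining height-$\asymp t$ factors to be treated by pointwise subconvexity; this is exactly why the exponent $13/84$ of the best known bound for $\zeta(1/2+iu)$ enters and produces the $11/21$ in the final estimate.
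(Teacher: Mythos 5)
Your argument is correct and is essentially the approach the paper intends: the paper's proof is only the remark ``similarly as the proof of Lemma \ref{J2}'', and your allocation of the eight zeta factors (two low-height factors absorbed by the $(1+|y-t\pm\tau|)^{1/4}$ denominators, two height-$\asymp t$ factors bounded by Bourgain's $13/84$ subconvexity, and four fed into Iwaniec's fourth moment via Cauchy--Schwarz) is exactly what reproduces the stated exponents, since $3/2-2\cdot\tfrac{13}{84}-\tfrac{2}{3}=\tfrac{11}{21}$. Your Stirling computation of $\widetilde Q(y)$ and the resulting localization to $|y-t|\le\tau+t^{\varepsilon}$ also match the pattern of Lemmas \ref{J1} and \ref{J2}, so the proposal supplies precisely the details the paper omits.
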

\begin{proof}
Similarly as the proof of Lemma \ref{J2}
\end{proof}
\begin{remark}We give more details of continuous spectrum $J_{2}$ and $K_{2}$ to express the estimate is beyond $\mathcal{O}(t^{-1/2+\varepsilon})$ which in discrete spectrum part is optimal under GLH trivially.
    
\end{remark}

\section{\label{sec:5}Cubic moment of Eisenstein series}
We will prove Theorem \ref{cubicmoment} from Proposition \ref{Maasscase} and \ref{Eisensteincase} in this section. Firstly, 
\begin{equation*}
    \begin{aligned}
        I_{1} = &\int_{\mathbb{X}}^{reg}E(z,1/2+it)^{3}\dd \mu z = \frac{\xi(1/2+3it)\xi(1/2+it)^{2}\xi(1/2-it)}{\xi(1+2it)^{3}}\\
        & \ll \frac{1}{(1+ |t|)^{1- \varepsilon}}|\zeta(1/2+3it)\zeta(1/2+it)^{2}\zeta(1/2-it)|\\
        & \ll |t|^{-1+4\cdot 13/84+\varepsilon} = |t|^{-8/21+\varepsilon}.
    \end{aligned}
\end{equation*}
and note $ \langle \mathcal{E}_{\psi}, E_{t}^{3}\rangle_{reg} = 0$ because $\psi$  rapidly decays at cusp $\infty$. Since
\begin{equation*}
    \begin{aligned}
\mathcal{E}_{E_{t}^{3}} = E(z,3/2+ 3it) +&\frac{3\xi(2it)}{\xi(1+2it)}E(z,3/2+it)\\ &+ \frac{3\xi(2it)^{2}}{\xi(1+2it)^{2}}E(z,3/2-it) + (\frac{\xi(2it)}{\xi(1+2it)})^{3}E(z,3/2-3it),        
    \end{aligned}
\end{equation*}
by Rankin-Selberg method we calculate
\[\int_{\mathbb{X}}\psi(z)E(z,3/2+it)\dd \mu z = \int_{0}^{\infty}y^{3/2+it}a_{\psi}(y)\frac{\dd y}{y^{2}}\]
where $a_{\psi}(y) = \int_{-1/2}^{1/2}\psi(x+iy)\dd x$. Since $\psi \in C_{c}^{\infty}(\mathbb{X})$ we have the derivative $a_{\psi}^{(j)}(y)$  is compactly supported in $(\sqrt{3}/2,\infty)$ . Thus by partial integral enough times we get
\[\int_{0}^{\infty}y^{3/2+it}a_{\psi}(y)\frac{\dd y}{y^{2}} \ll_{\psi,A} \frac{1}{(1+|t|)^{A}}.\]
Thus we get
\[I_{4} \ll_{\psi} (1+|t|)^{-100}.\]
\par
Now we consider $I_{2}$ and $I_{3}$. Note that we reduce the problem to $I_{2}$ and $I_{3}$ when $t_{j} , \tau \ll t^{\varepsilon}$ from following truncation.\par
We consider for any integral $\ell \geq 0$, 
\[(1/4+t_{j}^{2})^{\ell}\langle u_{j} , \psi\rangle  = \langle\Delta^{\ell} u_{j} , \psi\rangle = \langle u_{j} , \Delta^{\ell}\psi\rangle \ll_{\psi,\ell} 1\]
and
\[(1/4+t^{2})^{\ell}\langle E_{t} , \psi\rangle  = \langle\Delta^{\ell} E_{t} , \psi\rangle = \langle E_{t} , \Delta^{\ell}\psi\rangle \ll_{\psi,\ell} (1+|t|)^{3/8+\varepsilon}\]
from the sup norm estimate of $E_{t}$ \cite{HuangXu+2017+1355+1369}. Thus we get
\[\langle u_{j},\psi\rangle \ll_{\ell,\psi}t_{j}^{-2\ell}, \quad \langle E_{t},\psi\rangle \ll_{\ell,\psi}t^{-2\ell}.\]
We use the explicit formula proposition \ref{explicitMaass}, \ref{explicitEisen} and use the convexity bound of $L$-functions we get at least for a fixed large constant $B > 0$ such that
\[\langle u_{j} , E_{t}^{3} \rangle \ll (1+|t_{j}t|)^{B} \quad ,\langle E_{\tau} , E_{t}^{3} \rangle_{reg} \ll (1 + |\tau t|)^{B}.\]
Thus we easily truncate the sum and integral at $\tau,t_{j} \ll t^{\varepsilon}$ and the remainder is  an arbitrary  power saving of $t$. Thus
\begin{equation}
    \begin{aligned}
         &\sum\limits_{j \geq 1}\langle u_{j}, \psi \rangle \langle u_{j},E_{t}^{3}\rangle_{reg} +\frac{1}{4\pi}\int_{\mathbb{R}}\langle E_{\tau} , \psi\rangle \langle E_{\tau} ,  E_{t}^{3}\rangle_{reg}\dd \tau\\
     & = \sum\limits_{t_{j} \leq t^{\varepsilon}}\langle u_{j}, \psi \rangle \langle u_{j},E_{t}^{3}\rangle_{reg} +\frac{1}{4\pi}\int_{|\tau|\leq t^{\varepsilon}}\langle E_{\tau} , \psi\rangle \langle E_{\tau} ,  E_{t}^{3}\rangle_{reg}\dd \tau +  \mathcal{O}_{\psi}(t^{-100})\\
     & \ll_{\psi, \varepsilon} t^{-1/3+\varepsilon}
    \end{aligned}
\end{equation}

from Proposition \ref{Maasscase} and Proposition \ref{Eisensteincase}.

\section{\label{sec:6}The proof of Theorem \ref{JointEEg} ($t_{g} \geq  2t^{\varepsilon^{\prime}}$)}
In this section, we prove Theorem \ref{JointEEg}. After spectral decomposition and a natural truncation we obtain for any $\varepsilon^{\prime} > 0$
\begin{equation}\label{EEg}
    \begin{aligned}
         \int_{\Gamma\backslash\mathbb{H}}
    \psi(z){E_{t}^{\star}}(z)^{2}g(z)\dd \mu z & =
    \overline{c_{t}}^{2} \langle \psi , \frac{3}{\pi}\rangle\langle 1, E_{t}^2g\rangle
    + \delta_{t_g\leq t^{\varepsilon^{\prime}}}\langle \psi , g\rangle  \langle 1, {E_{t}^{\star}}^2 g^2\rangle\\
    & \quad +
    \overline{c_{t}}^{2}\sum_{\substack{t_k\leq \max(t, t_g)^{\varepsilon^{\prime}}\\ \phi_k\neq g}}\langle \psi , \phi_k\rangle \langle \phi_k, E_{t}^2g \rangle\\
    &\quad +
    \overline{c_{t}}^{2}\frac{1}{4\pi}\int_{|y|\leq \max(t, t_g)^{\varepsilon^{\prime}}}\langle \psi , E_{y}\rangle \langle E_{y}, E_{t}^2 g\rangle \dd y\\
    & \quad\quad + \mathcal{O}(\max(t, t_g)^{-A}).
    \end{aligned}
\end{equation}
 We divide the range into three parts(in fact depending on the Gamma factor)  $2t^{\varepsilon^{\prime}} \leq t_{g} \leq 2t - t_{g}^{\varepsilon}$, $ 2t - t_{g}^{\varepsilon}\leq t_{g} \leq 2t + t_{g}^{\varepsilon}$ and $t_{g} \geq 2t + t_{g}^{\varepsilon}$.\par
In the range $t_{g} \geq 2t^{\varepsilon^{\prime}}$, we have the second constant term is zero. 
So we mainly estimate the three terms
 \[
 \langle 1, E_{t}^2g\rangle , \langle \phi_k, E_{t}^2 g \rangle , \langle E_{y}, E_{t}^2g\rangle
 \]
when $t_k\ll \max(t, t_g)^{o(1)}$ and $|y|\ll \max(t, t_g)^{o(1)}$ in this section.

For the first term  we have the following proposition.
\begin{proposition}For any $\varepsilon > 0$,
if $t_{g} \geq 2t + t_{g}^{\varepsilon}$, then
\[\langle 1, E_{t}^2g\rangle  \ll t_{g}^{-100},\]
and if $ 2t - t_{g}^{\varepsilon}\leq t_{g} \leq 2t + t_{g}^{\varepsilon}$, then
    \[\langle 1, E_{t}^{2}g\rangle \ll_{\varepsilon} \frac{1}{|t_{g}|^{1/6+\delta-\varepsilon}}.\]
If $t_{g} \leq 2t - t_{g}^{\varepsilon}$, then
\begin{equation}
   \langle 1, E_{t}^{2}g\rangle \ll_{\varepsilon} \frac{t^{1/12+\varepsilon}}{(1 +|2t-t_{g}|)^{1/4}|t_{g}|^{1/6}} 
\end{equation}
or
\begin{equation}
    \langle 1, E_{t}^{2}g\rangle \ll_{\varepsilon} \frac{t^{\varepsilon}}{(1 +|2t-t_{g}|)^{\delta}(1 +|2t+t_{g}|)^{\delta}|t_{g}|^{1/6}}.
\end{equation}
where $\delta$ is from Lemma \ref{MichelVenkatesh}.
\end{proposition}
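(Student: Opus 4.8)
The starting point is the exact Rankin--Selberg identity recorded in \S\ref{sec:3}: specializing $\langle E_t^2,\phi_k\rangle=\rho_k(1)\rho_t(1)\Lambda(1/2+2it,\phi_k)\Lambda(1/2,\phi_k)/\xi(1+2it)$ to $\phi_k=g$ and using $\rho_t(1)=1/\xi(1+2it)$ gives
$\langle 1,E_t^2g\rangle=\rho_g(1)\,\rho_t(1)^2\,\Lambda(1/2+2it,g)\,\Lambda(1/2,g)$,
with $\langle 1,E_t^2g\rangle=\langle E_t^2,g\rangle$ since $g$ is real (alternatively one unfolds $\int_{\mathbb{X}}gE_t^2=\int_{\Gamma_\infty\backslash\mathbb{H}}gE_t\,y^{1/2+it}\,\dd\mu z$ and evaluates a classical Mellin--Barnes beta integral for a product of two $K$-Bessel functions). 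In particular the integral vanishes unless $g$ is even, because the root number $-1$ forces $\Lambda(1/2,g)=0$ for odd $g$; so we may assume $g$ even. The proposition thus reduces to estimating a product of Gamma factors against the two central values $L(1/2+2it,g)$ and $L(1/2,g)$.

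Next I would unwind the archimedean factors by Stirling's formula. Using $|\rho_g(1)|^2=\cosh(\pi t_g)/(2L(1,\Sym^2g))$, the standard lower bounds $|\zeta(1+2it)|^{-1},L(1,\Sym^2g)^{-1}\ll\log$, and Stirling applied to the Gamma factors in $L_\infty(1/2+2it,g)$, in $L_\infty(1/2,g)$, and in $\Gamma(1/2+it)$, one checks that all the exponential factors combine into $\exp\{\pi t-\tfrac{\pi}{4}(|2t+t_g|+|2t-t_g|)\}$. When $t_g\ge 2t+t_g^\varepsilon$ this equals $\exp\{-\tfrac{\pi}{2}(t_g-2t)\}\le\exp(-\tfrac{\pi}{2}t_g^\varepsilon)\ll t_g^{-100}$, which settles the first case (the same ``no cusp form in the window'' mechanism behind the $t_g^{-100}$ line of Theorem \ref{JointEEg}). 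When $t_g\le 2t+t_g^\varepsilon$ the exponential is $\asymp 1$, and the surviving polynomial factor is $\ll t^{-1/4}(1+|2t-t_g|)^{-1/4}t_g^{-1/2}\,|L(1/2+2it,g)|\,|L(1/2,g)|\,(tt_g)^\varepsilon$, having used $1+|2t+t_g|\asymp t$ throughout this range.

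It then remains to insert bounds for the two central values and optimize. For $L(1/2,g)$ I would use the Weyl-quality bound $L(1/2,g)\ll t_g^{1/3+\varepsilon}$, which drops out of the third moment estimate $\sum_{t_g\le t_j\le t_g+1}L(1/2,u_j)^3\ll t_g^{1+\varepsilon}$ of \cite{MR3635360}. For $L(1/2+2it,g)$ there are two complementary inputs: the hybrid Weyl bound $L(1/2+2it,g)\ll(2t+t_g)^{1/3+\varepsilon}\ll t^{1/3+\varepsilon}$ from the hybrid fourth moment, which is (\ref{hybridsub}) with $\phi_k=g$ — this ignores the conductor drop and wins when $|2t-t_g|$ is large — and the Michel--Venkatesh subconvexity bound of Lemma \ref{MichelVenkatesh}, $L(1/2+2it,g)\ll[(1+|2t+t_g|)(1+|2t-t_g|)]^{1/4-\delta}$, which exploits the conductor drop and wins when $|2t-t_g|$ is moderate. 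Plugging the Weyl bounds for both values into the polynomial estimate yields $\langle 1,E_t^2g\rangle\ll t^{1/12+\varepsilon}(1+|2t-t_g|)^{-1/4}t_g^{-1/6}$; plugging Michel--Venkatesh for $L(1/2+2it,g)$ and Weyl for $L(1/2,g)$ yields $\langle 1,E_t^2g\rangle\ll t^{\varepsilon}(1+|2t-t_g|)^{-\delta}(1+|2t+t_g|)^{-\delta}t_g^{-1/6}$; and in the window $2t-t_g^\varepsilon\le t_g\le 2t+t_g^\varepsilon$, where in addition $t\asymp t_g$ and $1+|2t-t_g|\asymp 1$, this last bound collapses to $\ll t_g^{-1/6-\delta+\varepsilon}$. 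These are exactly the four assertions.

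There is no deep obstacle here; the substance is careful bookkeeping. The delicate point is tracking the conductor-aware shape of the archimedean factor of $L(1/2+2it,g)$ as $2t$ and $t_g$ pass each other — the naive convexity bound $t^{1/2}$ is wasteful, and one must carry the Stirling factors $(1+|2t\pm t_g|)^{-1/4}$ honestly — together with checking that each Stirling expansion is legitimate, i.e. that $2t$, $t_g$, and (where it appears undivided) $|2t\pm t_g|$ genuinely tend to infinity in the regime $t_g\ge 2t^{\varepsilon^{\prime}}$ at hand, inserting the harmless factor $1+|2t-t_g|$ wherever $2t-t_g$ may remain bounded.
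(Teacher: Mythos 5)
Your proposal is correct and follows essentially the same route as the paper: the Rankin--Selberg identity $\langle E_t^2,g\rangle=\rho_g(1)\rho_t(1)\Lambda(1/2+2it,g)\Lambda(1/2,g)/\xi(1+2it)$, Stirling's formula producing the factor $(1+|2t-t_g|)^{-1/4}(1+|2t+t_g|)^{-1/4}t_g^{-1/2}\exp(-\tfrac{\pi}{2}Q(t,t_g))$ with $Q(t,t_g)=\tfrac{|2t-t_g|}{2}+\tfrac{|2t+t_g|}{2}-2|t|$, and then the Weyl-type bound for $L(1/2,g)$ combined with either the hybrid bound (\ref{hybridsub}) or Lemma \ref{MichelVenkatesh} for $L(1/2+2it,g)$. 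Your extra remarks (vanishing for odd $g$ via the root number, the unfolding alternative) are harmless additions, and your bookkeeping in each range agrees with the stated exponents.
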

\begin{proof}
By Rankin-Selberg method, we get

\begin{equation}
    \begin{aligned}
  \langle 1, E_{t}^2g\rangle  \ll \frac{|L(1/2+2it, g)L(1/2,g)|}{|\zeta(1+2it)|}H(t,t_{g})    
    \end{aligned}
\end{equation}
where 
\begin{equation*}
    \begin{aligned}
        H(t,t_{g}) &= \frac{|\Gamma(\frac{1/2+2it+it_{g}}{2})\Gamma(\frac{1/2+2it-it_{g}}{2})||\Gamma(\frac{1/2+ it_{g}}{2})|^{2}}{|\Gamma(\frac{1+2it}{2})|^{2}|\Gamma(\frac{1+2it_{g}}{2}) |} \\
        & \asymp (1 +|2t-t_{g}|)^{-1/4}(1 +|2t+t_{g}|)^{-1/4}|t_{g}|^{-1/2}\exp(-\frac{\pi}{2}Q(t,t_{g})).\\
    \end{aligned}
\end{equation*}
where 
\[Q(t,t_{g}) = \frac{|2t-t_{g}|}{2} + \frac{|2t+t_{g}|}{2} - 2|t|.\]
Hence if $t_{g} \geq 2t + t_{g}^{\varepsilon}$, we have 
$H(t,t_{g}) \ll_{\varepsilon} \exp(-\frac{\pi}{2}t_{g}^{\varepsilon})$. Then $\langle1,E_{t}^{2}g\rangle \ll t_{g}^{-100}$. And if $t_{g} \leq 2t + t_{g}^{\varepsilon}$, we have $H(t,t_{g}) \ll (1 +|2t-t_{g}|)^{-1/4}(1 +|2t+t_{g}|)^{-1/4}|t_{g}|^{-1/2}$. Thus
\[\langle 1, E_{t}^{2}g\rangle \ll_{\varepsilon} \frac{(t+t_{g})^{1/3+\varepsilon}t_{g}^{1/3+\varepsilon}}{(1 +|2t-t_{g}|)^{1/4}(1 +|2t+t_{g}|)^{1/4}|t_{g}|^{1/2}}\]
or
\[\langle 1, E_{t}^{2}g\rangle \ll_{\varepsilon} \frac{t_{g}^{1/3+\varepsilon}}{(1 +|2t-t_{g}|)^{\delta}(1 +|2t+t_{g}|)^{\delta}|t_{g}|^{1/2}}.\]
Above equations come from  subconvexity bound (\ref{hybridsub}) or  Lemma \ref{MichelVenkatesh}.
\end{proof}

For the  $\langle\phi_{k}g, E_{t}^{2}\rangle$, we regard it as regularized inner product and use Plancherel formula
\begin{equation}\label{discrete}
\begin{aligned}
        \langle\phi_{k}g, E_{t}^{2}\rangle &= \sum\limits_{j\geq 1}\langle u_{j} , g\phi_{k}\rangle\langle E_{t}^{2}, u_{j}\rangle + \frac{1}{4\pi}\int_{\mathbb{R}}\langle E_{\tau}, u_{k}g\rangle\langle E_{\tau},E_{t}^{2}\rangle_{reg}\dd \tau 
        + \lim\limits_{t^\prime \rightarrow 0} \langle g\phi_{k} ,\mathcal{E}_{E_{t}E_{t^{\prime}}}\rangle.
\end{aligned}
\end{equation}
Note $g\neq \phi_{k}$, then the last integral is  holomorphic at $t^{\prime} = 0$ and the main part in the last integral is 
\begin{equation}\label{regularized}
 \frac{\rho_{g}(1)\rho_{k}(1)\Lambda(1 ,\phi_{k}\times g)}{\xi(2)} \asymp \frac{L(1,\phi_{k}\times g)\exp(-\frac{\pi}{2}(|t_{k} + t_{g}| + |t_{k} - t_{g}| - t_{k} - t_{g}))}{L(1,\sym^{2}g)^{1/2} L(1,\sym^{2}\phi_{k})^{1/2}}.
 \end{equation}
Then they will decay rapidly from Stirling formula  when $|t_{g}-t_{k}| \geq t^{\varepsilon^{\prime}}$.

\begin{proposition}\label{lastsection}Assume $t_{g} \geq 2t^{\varepsilon^{\prime}}$. For any $\varepsilon  > 0$ and $0< \eta < 1$, we get
 \begin{equation}
    \begin{aligned}
  \langle\phi_{k}g, E_{t}^{2}\rangle       \ll_{\varepsilon^{\prime}} \left\{	
			\begin{array}{lr}	
   t_{g}^{-100} \quad & t_{g} \geq 2t +t_{g}^{\varepsilon}\\
		\frac{t^{\varepsilon}}{t^{1/9+\delta/3}}\quad &      2t - t_{g}^{\varepsilon} \leq t_{g} \leq 2t + t_{g}^{\varepsilon} \\
  
 \frac{t^{\varepsilon}}{t^{1/9+\delta/3}(1+ |2t -t_{g}|)^{1/6+\delta/3}} \quad &      (2-\eta)t \leq t_{g} \leq 2t - t_{g}^{\varepsilon} \\
  
   \frac{t^{\varepsilon}}{t_{g}^{1/6}t^{1/6}} \quad &      t^{2/3 
 } \leq t_{g} \leq (2-\eta)t    \\
 
   \frac{t^{\varepsilon}}{t_{g}^{1/6}t^{1/12}}  \quad &       t^{1/3}\leq t_{g} \leq t^{2/3
 } \\

     \frac{t^{\varepsilon} t_{g}^{1/12}}{t^{1/6}} \quad &     t_{g} \leq t^{1/3}    \\
			\end{array}
			\right.
    \end{aligned}
\end{equation}
\end{proposition}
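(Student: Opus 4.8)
The plan is to run the regularized Plancherel decomposition (\ref{discrete}) and treat its three pieces separately: the discrete sum $D=\sum_j\langle u_j,g\phi_k\rangle\langle E_t^2,u_j\rangle$, the continuous integral $C=\frac1{4\pi}\int_{\mathbb R}\langle E_\tau,\phi_k g\rangle\langle E_\tau,E_t^2\rangle_{reg}\,\dd\tau$, and the regularized term (handled, as in the proof of Proposition~\ref{explicitMaass}, by the $t^{\prime}\to0$ disturbance $E_tE_{t+t^{\prime}}$), keeping throughout the running hypothesis $t_k\ll\max(t,t_g)^{\varepsilon^{\prime}}$ from (\ref{EEg}). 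Since $\phi_k\neq g$, (\ref{regularized}) bounds the regularized term by $L(1,\phi_k\times g)$ times a ratio of Gamma factors of size $\asymp\exp\bigl(-\tfrac{\pi}{2}(|t_k+t_g|+|t_k-t_g|-t_k-t_g)\bigr)\ll e^{-\frac{\pi}{2}(t_g-t_k)}$; as $t_g\geq2t^{\varepsilon^{\prime}}$ while $t_k\ll\max(t,t_g)^{\varepsilon^{\prime}}$, the exponent is $\gg\max(t,t_g)^{\varepsilon^{\prime}}$, so this term is $\mathcal{O}(\max(t,t_g)^{-A})$. Next, Stirling applied to Watson's formula for $\langle u_j,g\phi_k\rangle$ --- comparing the archimedean factor of $\Lambda(1/2,\phi_k\times g\times u_j)$ against $\Lambda(1,\sym^{2}g)\Lambda(1,\sym^{2}u_j)$ --- shows $|\langle u_j,g\phi_k\rangle|^2\ll e^{-\pi(|t_j-t_g|-t_k)}$ once $|t_j-t_g|>t_k$, the triple-product ``triangle-inequality'' localization sharpened by the smallness of $t_k$; hence $D$ may be truncated to $|t_j-t_g|\leq\max(t,t_g)^{\varepsilon^{\prime}}$ up to $\mathcal{O}(\max(t,t_g)^{-A})$, the tail absorbed because $\langle E_t^2,u_j\rangle$ grows at most polynomially. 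The same computation on the Rankin--Selberg factor $\langle E_\tau,\phi_k g\rangle$, which up to Gamma factors and $1/\zeta(1+2i\tau)$ is a critical-line value of $L(\cdot,\phi_k\times g)$, localizes $C$ to $\bigl||\tau|-t_g\bigr|\leq\max(t,t_g)^{\varepsilon^{\prime}}$. Finally, the Rankin--Selberg formula for $\langle E_t^2,u_j\rangle$ and Zagier's formula (\ref{3eisen}) for $\langle E_\tau,E_t^2\rangle_{reg}$ each carry, in the spectral variable $x=t_j$ or $x=\tau$, a Gamma factor of size $\asymp e^{\pi t}\exp\bigl(-\tfrac{\pi}{4}(|2t+x|+|2t-x|)\bigr)$, which is exponentially small once $x>2t+x^\varepsilon$; combined with the localization at $x\approx t_g$ this already gives $\langle\phi_k g,E_t^2\rangle\ll t_g^{-100}$ for $t_g\geq2t+t_g^\varepsilon$, settling the first case.

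For $2t^{\varepsilon^{\prime}}\leq t_g\leq2t+t_g^\varepsilon$ I would estimate $D$ by Cauchy--Schwarz over the truncated range,
\[
|D|\leq\Bigl(\sum_{|t_j-t_g|\leq\max(t,t_g)^{\varepsilon^{\prime}}}|\langle u_j,g\phi_k\rangle|^2\Bigr)^{1/2}\Bigl(\sum_{|t_j-t_g|\leq\max(t,t_g)^{\varepsilon^{\prime}}}|\langle E_t^2,u_j\rangle|^2\Bigr)^{1/2}+\mathcal{O}(\max(t,t_g)^{-A}).
\]
The first factor is controlled by Bessel's inequality for the cusp form $g\phi_k$ together with the sup-norm bound $\|\phi_k\|_\infty\ll t_k^{5/12+\varepsilon}$ (any polynomial bound suffices): $\sum_j|\langle u_j,g\phi_k\rangle|^2\leq\langle g\phi_k,g\phi_k\rangle\leq\|\phi_k\|_\infty^2\langle g,g\rangle\ll t_k^{5/6+\varepsilon}\ll\max(t,t_g)^\varepsilon$ --- this is the elementary device that avoids the degree-eight triple-product central value. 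For the second factor I substitute $\langle E_t^2,u_j\rangle=\rho_j(1)\rho_t(1)\Lambda(1/2+2it,u_j)\Lambda(1/2,u_j)/\xi(1+2it)$, strip the Gamma factors via Stirling --- which, for $t_j\approx t_g$, leaves a weight $\asymp\max(t,t_g)^\varepsilon(2t+t_g)^{-1/2}t_g^{-1}(1+|2t-t_g|)^{-1/2}$ --- and reduce to the spectral second moment of $|L(1/2+2it,u_j)\,L(1/2,u_j)|^2$ over $|t_j-t_g|\leq\max(t,t_g)^{\varepsilon^{\prime}}$.

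That second moment is estimated by H\"older/Cauchy--Schwarz, the choice of inputs dictated by the size of $t_g$. When $t_g\gtrsim t^{2/3}$ the hybrid fourth moment of Lemma~\ref{fourthLemma} applies (its hypothesis $2t\lesssim t_g^{3/2}$ holding) and gives $\sum|L(1/2+2it,u_j)|^4\ll(t+t_g)^{4/3+\varepsilon}$ after enlarging the window to the admissible length $(t+t_g)^{4/3}t_g^{-1}$; when $t_g\lesssim t^{2/3}$ this lemma fails, and one replaces it by the spectral large sieve (Lemma~\ref{largesieve}), giving $\sum|L(1/2+2it,u_j)|^2\ll(t+t_g)^{1+\varepsilon}$, combined with the pointwise hybrid subconvexity (\ref{hybridsub}). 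The factor $L(1/2,u_j)$ is always handled by its own fourth moment $\sum_{|t_j-t_g|\leq t_g^{1/3}}|L(1/2,u_j)|^4\ll t_g^{4/3+\varepsilon}$ (or, for very small $t_g$, the cubic moment from \cite{MR3635360} against a pointwise Weyl-type bound). In the window $t_g\in[2t-t_g^\varepsilon,2t+t_g^\varepsilon]$ the conductor of $L(1/2+2it,u_j)$ with $t_j\approx t_g$ drops to $\asymp t^{1+o(1)}$, and here the subconvexity of Lemma~\ref{MichelVenkatesh}, $L(1/2+2it,u_j)\ll[(1+|2t+t_j|)(1+|2t-t_j|)]^{1/4-\delta}$, is what produces the saving $t^{\delta/3}$ and, just outside the window, the decay $(1+|2t-t_g|)^{1/6+\delta/3}$. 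Assembling over the six ranges yields the asserted bounds for $D$. The continuous integral $C$ is handled the same way but is strictly easier, since $\phi_k$ and $g$ are fixed: after the $\tau$-localization one needs only pointwise subconvexity --- $\zeta(1/2+i\,\cdot)\ll(\cdot)^{13/84+\varepsilon}$ for the three zeta values in $\langle E_\tau,E_t^2\rangle_{reg}$ and Lemma~\ref{MichelVenkatesh} (or convexity) for $L(1/2\pm i\tau,\phi_k\times g)$ --- so $C$ is dominated by $D$ in every range, and $\langle\phi_k g,E_t^2\rangle\ll W(t,t_g)$ follows.

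The hard part is the case analysis of the previous paragraph: in each sub-range one must balance the Gamma-factor decay $(2t+t_g)^{-1/4}(1+|2t-t_g|)^{-1/4}t_g^{-1/2}$ against the sharpest available mean value of $L(1/2+2it,u_j)$ over a window of length $\max(t,t_g)^{\varepsilon^{\prime}}$, and two regimes are genuinely delicate. First, for $t_g\lesssim t^{2/3}$ no hybrid fourth moment of $L(1/2+2it,u_j)$ is available, so one is forced onto the large sieve plus individual subconvexity, which limits the exponent there. Second, in the conductor-dropping window $t_g\approx2t$ the shortest interval admissible in the fourth-moment inputs no longer matches the reduced conductor scale, so the estimate must be driven by the pointwise bound of Lemma~\ref{MichelVenkatesh}; reconciling its exponent $1/4-\delta$ with the moment inputs so as to land exactly on the powers $1/9+\delta/3$ in $W(t,t_g)$ is where the bookkeeping is heaviest.
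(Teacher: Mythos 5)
Your proposal is correct and follows essentially the same route as the paper: the Plancherel decomposition (\ref{discrete}), exponential localization of the spectral sum to $|t_j-t_g|\leq t^{\varepsilon}$ via Stirling on the archimedean factors, Cauchy--Schwarz with Bessel's inequality plus the sup-norm of $\phi_k$ to avoid the degree-eight triple product $L$-value, and then the same menu of inputs (Jutila--Motohashi hybrid fourth moment for $t_g\gtrsim t^{2/3}$, large sieve plus pointwise hybrid subconvexity for smaller $t_g$, and Michel--Venkatesh combined with a H\"older split against Young's cubic moment in the conductor-dropping window $t_g\approx 2t$). The only difference is cosmetic (you quote $\|\phi_k\|_\infty\ll t_k^{5/12+\varepsilon}$ where the paper uses $t_k^{1/4}$, which is immaterial since $t_k\ll t^{\varepsilon'}$), and your deferred bookkeeping in the conductor-dropping range matches the paper's $(2/3,1/3)$-H\"older computation.
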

\begin{proof}
By Rankin-Selberg method and Watson's formula we get
\begin{equation*}
   \sum\limits_{j\geq 1}\langle u_{j} , g\phi_{k}\rangle\langle E_{t}^{2}, u_{j}\rangle \ll \sum\limits_{j\geq 1}\frac{|L(1/2+2it,u_{j})|L(1/2,u_{j})L(1/2,u_{j}\times g \times \phi_{k})^{1/2}H(t_{j}, t, t_{g}, t_{k})}{|\zeta(1+2it)|^{2}L(1,\sym^{2}u_{j})^{3/2}L(1,\sym^{2}g)^{1/2}L(1,\sym^{2}\phi_{k})^{1/2}}
\end{equation*}
where
\begin{equation*}
    \begin{aligned}
H(t_{j}, t,t_{g} , t_{k}) &\asymp \frac{|\Gamma(\frac{1/2+2it+it_{j}}{2})\Gamma(\frac{1/2+2it-it_{j}}{2})||\Gamma(\frac{1/2+ it_{j}}{2})|^{2}\prod\limits_{\pm_1}\prod\limits_{\pm_2}|\Gamma(\frac{1/2+it_{j}\pm_1 t_{g} \pm_2 t_{k}}{2})|}{|\Gamma(\frac{1+2it}{2})|^{2}|\Gamma(\frac{1+2it_{j}}{2}) |^{2}|\Gamma(\frac{1+2it_{g}}{2})\Gamma(\frac{1+2it_{k}}{2})|} \\
        & \asymp (1 +|2t-t_{j}|)^{-1/4}(1 +|2t+t_{j}|)^{-1/4}|t_{j}|^{-1/2}\prod_{\pm_1}\prod_{\pm_2}(1 + |t_{j} \pm_1 t_{g} \pm_2 t_{k}|)^{-1/4}\\
        & \quad \quad \exp(-\frac{\pi}{2}Q(t_{j} , t,t_{g} , t_{k})).\\    
    \end{aligned}
\end{equation*}
where 
\begin{equation*}
    \begin{aligned}
Q(t_{j} , t,t_{g} , t_{k})) &= \frac{|2t-t_{j}|}{2} + \frac{|2t + t_{j}|}{2}
+ \frac{|t_{j} +t_{g} + t_{k}|}{2} + \frac{|t_{j} - t_{g} + t_{k}|}{2} \\
&\quad + \frac{|t_{j} + t_{g}  - t_{k}|}{2} + \frac{|t_{j} -t_{g} - t_{k}|}{2} - 2t - t_{j} -t_{g} - t_{k}.        
    \end{aligned}
\end{equation*}
For $Q(t_j, t, t_g, t_k)$, we have the following lemma.
\begin{lemma}[\cite{hua2024jointvaluedistributionheckemaass}, (2.28), (2.29), (2.30)] If $2t \leq t_{g} - t_{k}$, we have
\begin{equation}\label{small}
Q(t_j, t, t_g, t_k)=
\begin{cases}
t_g-t_k-t_j,     &0\leq t_j\leq 2t,\\
t_g-2t-t_k,       &2t < t_j\leq t_g-t_k,\\
t_j-2t, &t_g-t_k<t_j\leq t_g+t_k,\\
2t_j-2t-t_g-t_k,  &t_g+t_k<t_j.\\
\end{cases}
\end{equation}
    If $t_g-t_k<2t\leq t_g+t_k$, we have
\begin{equation}\label{middle}
Q(t_j, t, t_g, t_k)=\begin{cases}
t_g-t_k-t_j,     &0\leq t_j\leq t_g-t_k,\\
0,       &t_g-t_k<t_j\leq 2t,\\
t_j-2t, &2t<t_j\leq t_g+t_k,\\
2t_j-2t-t_g-t_k,  &t_g+t_k<t_j\\
\end{cases}
\end{equation}
and if $t_g+t_k<2t$, we have
\begin{equation}\label{large}
Q(t_j, t, t_g, t_k)=\begin{cases}
t_g-t_k-t_j,     &0\leq t_j\leq t_g-t_k,\\
0,       &t_g-t_k<t_j\leq t_g+t_k,\\
t_j-t_g-t_k, &t_g+t_k<t_j\leq 2t,\\
2t_j-2t-t_g-t_k,  &2t<t_j.\\
\end{cases}
\end{equation}
\end{lemma}

Therefore, when  $t_{g} \geq 2t + t_{g}^{\varepsilon}$, we get
\[ \sum\limits_{j\geq 1}\langle u_{j} , g\phi_{k}\rangle\langle E_{t}^{2}, u_{j}\rangle \ll t_{g}^{-100}.\]
\par

So  when $2t - t_{g}^{\varepsilon} \leq t_{g} \leq 2t + t_{g}^{\varepsilon}$ and when  $ t_{g} \leq 2t - t_{g}^{\varepsilon}$, we will truncate the integral in
$ t_{g} - 2 t^{\varepsilon}\leq t_{j} \leq t_{g} + 2t^{\varepsilon}$. Thus,
\begin{equation}\label{CSsup1}
    \begin{aligned}
         \sum\limits_{j\geq 1}\langle u_{j} , g\phi_{k}\rangle\langle E_{t}^{2}, u_{j}\rangle \ll \sum\limits_{ |t_{j} - t_{g}| \leq 2t^{\varepsilon}}|\langle u_{j},g \phi_{k}\rangle\langle E_{t}^{2} , u_{j}\rangle|.
    \end{aligned}
\end{equation}
Then we obtain
\begin{equation}\label{Bound1}
    \begin{aligned}
 \sum\limits_{j\geq 1}\langle u_{j} , g\phi_{k}\rangle\langle E_{t}^{2}, u_{j}\rangle &\ll       (\sum\limits_{j \geq1}| \langle u_{j},g \phi_{k}\rangle|^{2})^{1/2} (\sum\limits_{ |t_{j} - t_{g}| \leq 2t^{\varepsilon}}|\langle E_{t}^{2} , u_{j}\rangle|^{2} )^{1/2}.\\
    \end{aligned}
\end{equation}
For the first sum by using Bessel inequality and the sup norm bound of $g$, $\|\phi_{k}\|_{\infty} \ll t_{k}^{1/4} \ll t^{\varepsilon}$ we get 
\begin{equation}\label{CSsup3}
\sum\limits_{j\geq 1}|\langle u_{j},g\phi_{k}\rangle|^{2} \leq \langle \phi_{k}g, \phi_{k}g\rangle \ll t^{\varepsilon}.    
\end{equation}

And the second sum is bounded by 
\begin{equation}\label{nonconductordrop}
    \begin{aligned}
 t^{\varepsilon^{\prime}}&\sum\limits_{ |t_{j} - t_{g}| \leq 2t^{\varepsilon}}\frac{L(1/2,u_{j})^{2}|L(1/2 + 2it,u_{j})|^{2}}{t_{j}\prod\limits_{\pm}(1 + |t_{j} \pm 2t|)^{1/2}}\\
 & \ll \frac{t^{\varepsilon}}{t_{g}t^{1/2}(1+ |2t -t_{g}|)^{1/2}}\sum\limits_{ |t_{j} - t_{g}| \leq 2t^{\varepsilon}}L(1/2,u_{j})^{2}|L(1/2 + 2it,u_{j})|^{2}\\
 & \ll\frac{t^{\varepsilon}}{t_{g}t^{1/2}(1+ |2t -t_{g}|)^{1/2}}[\sum\limits_{ |t_{j} - t_{g}| \leq 2t^{\varepsilon}}L(1/2,u_{j})^{4}]^{1/2}[\sum\limits_{ |t_{j} - t_{g}| \leq t^{\varepsilon}}|L(1/2 + 2it,u_{j})|^{4}]^{1/2} \\
    \end{aligned}
\end{equation}
By Jutila-Motohashi's work we have $\sum\limits_{ |t_{j} - t_{g}| \leq t^{\varepsilon}}L(1/2,u_{j})^{4} \ll t_{g}^{4/3+\varepsilon}$ and  $\sum\limits_{ |t_{j} - t_{g}| \leq t^{\varepsilon}}|L(1/2 + 2it,u_{j})|^{4} \ll t^{4/3+\varepsilon}$  if  $t_{g}\gg t^{2/3}$. By hybrid subconvexity bound and second moment estimate $\sum\limits_{ |t_{j} - t_{g}| \leq t^{\varepsilon}}|L(1/2 + 2it,u_{j})|^{4} \ll t^{5/3+\varepsilon}$ or $t^{4/3+\varepsilon}t_{g}$ . \par
Then we have
\begin{equation}
     \sum\limits_{j\geq 1}\langle u_{j} , g\phi_{k}\rangle\langle E_{t}^{2}, u_{j}\rangle \ll \left\{	
			\begin{array}{lr}		
			\frac{t_{g}^{2/3}t^{2/3}t^{\varepsilon}}{t_{g}t^{1/2}(1+ |2t -t_{g}|)^{1/2}}	 \quad &      t^{2/3  
 } \ll t_{g} \leq 2t+ t^{\varepsilon}    \\
   \frac{t_{g}^{2/3}t^{5/6}t^{\varepsilon}}{t_{g}t^{1/2}(1+ |2t -t_{g}|)^{1/2}}  \quad &      t^{1/3} \leq t_{g} \leq  t^{2/3  
 } \\
    \frac{t_{g}^{2/3}t^{2/3}t_{g}^{1/2}t^{\varepsilon}}{t_{g}t^{1/2}(1+ |2t -t_{g}|)^{1/2}}\quad &      t_{g} \leq t^{1/3}.    \\
			\end{array}
			\right.
\end{equation}
On the other hand, we consider the conductor dropping case that $|2t-t_{g}|$ is little. We use another H\"older inequality get 

\begin{equation}\label{conductordrop}
    \begin{aligned}
     \sum\limits_{j\geq 1}\langle u_{j} , g\phi_{k}\rangle\langle E_{t}^{2}, u_{j}\rangle
        &\ll\frac{t^{\varepsilon}}{t_{g}t^{1/2}(1+ |2t -t_{g}|)^{1/2}}[\sum\limits_{ |t_{j} - t_{g}| \leq t^{\varepsilon}}L(\frac{1}{2},u_{j})^{3}]^{2/3}[\sum\limits_{ |t_{j} - t_{g}| \leq t^{\varepsilon}}|L(\frac{1}{2} + 2it,u_{j})|^{6}]^{1/3}\\
        &\ll \frac{t^{\varepsilon}t_{g}^{2/3}}{t_{g}t^{1/3+2\delta/3}(1+ |2t -t_{g}|)^{1/3+2\delta/3}}[\sum\limits_{ |t_{j} - t_{g}| \leq t^{\varepsilon}}|L(\frac{1}{2} + 2it,u_{j})|^{4}]^{1/3}\\
        & \ll \frac{t^{\varepsilon}t_{g}^{2/3}t^{4/9}}{t_{g}t^{1/3 + 2\delta/3}(1+ |2t -t_{g}|)^{1/3+2\delta/3}}.
    \end{aligned}
\end{equation}
In conclusion we have
$ \sum\limits_{j\geq 1}\langle u_{j} , g\phi_{k}\rangle\langle E_{t}^{2}, u_{j}\rangle$ is bounded by
\begin{equation}
    \begin{aligned}
      \left\{	
			\begin{array}{lr}		
		\frac{t^{\varepsilon}t_{g}^{1/3}t^{2/9}}{t_{g}^{1/2}t^{1/6 + \delta/3}} \asymp  \frac{t^{\varepsilon}}{t^{1/9+\delta/3}}\quad &      2t - t_{g}^{\varepsilon} \leq t_{g} \leq 2t + t_{g}^{\varepsilon} \\
  
  \frac{t^{\varepsilon}t_{g}^{1/3}t^{2/9}}{t_{g}^{1/2}t^{1/6+\delta/3}(1+ |2t -t_{g}|)^{1/6+\delta/3}} \asymp \frac{t^{\varepsilon}}{t^{1/9+\delta/3}(1+ |2t -t_{g}|)^{1/6+\delta/3}} \quad &      (2-\eta)t \leq t_{g} \leq 2t - t_{g}^{\varepsilon} \\
  
  \frac{t_{g}^{1/3}t^{1/3}t^{\varepsilon}}{t_{g}^{1/2}t^{1/4}(1+ |2t -t_{g}|)^{1/4}}\asymp \frac{t^{\varepsilon}}{t_{g}^{1/6}t^{1/6}} \quad &      t^{2/3} \leq t_{g} \leq (2-\eta)t    \\
 
   \frac{t_{g}^{1/3}t^{5/12}t^{\varepsilon}}{t_{g}^{1/2}t^{1/4}(1+ |2t -t_{g}|)^{1/4}}\asymp\frac{t^{\varepsilon}}{t_{g}^{1/6}t^{1/12}}  \quad &       t^{1/3}\leq t_{g} \leq t^{2/3} \\

    \frac{t_{g}^{1/3}t^{1/3}t_{g}^{1/4}t^{\varepsilon}}{t_{g}^{1/2}t^{1/4}(1+ |2t -t_{g}|)^{1/4}}\asymp \frac{t^{\varepsilon} t_{g}^{1/12}}{t^{1/6}} \quad &      t_{g} \leq t^{1/3}    \\
			\end{array}
			\right.
    \end{aligned}
\end{equation}
Note that if conductor drop in equation \ref{nonconductordrop} we give $\mathcal{O}(t^{-1/12})$ and in equation \ref{conductordrop} we optimal $\mathcal{O}(t^{-1/9-\delta/3})$. 
The continuous part is enough from the nice bound of Riemann zeta function so we complete the proof.

\end{proof}
It is similar to estimate the inner product $\langle E_{y}, E_{t}^2g\rangle$, so in fact, we prove the Theorem \ref{JointEEg} when $t_{g}\geq 2t^{\varepsilon^{\prime}}$.\par

\section{\label{sec:7}The proof of Theorem \ref{JointEEg} 
 ($t_{g} \leq  2t^{\varepsilon^{\prime}}$)}
In this section, we mainly study the second constant term in equation (\ref{EEg})
\[\langle g^{2},{E_{t}^{\star}}^{2}\rangle = \overline{c_{t}}^{2}\langle g^{2},E_{t}^{2}\rangle\]
which will contribution a $\log$ contribution.
By Plancherel formula we get
\begin{equation*}
\begin{aligned}
        \langle g^{2}, E_{t}^{2}\rangle &= \sum\limits_{j\geq 1}\langle u_{j} , g^{2}\rangle\langle E_{t}^{2}, u_{j}\rangle + \frac{1}{4\pi}\int_{\mathbb{R}}\langle E_{\tau}, g^{2}\rangle\langle E_{\tau},E_{t}^{2}\rangle_{reg}\dd \tau 
        + \lim\limits_{\eta \rightarrow 0}\langle g^{2} , \mathcal{E}_{E(1/2 + it)E(1/2+ it + \eta)}\rangle.
\end{aligned}
\end{equation*}
The first two parts essentially can estimate as the same as previous case and are $\mathcal{O}(t^{-\frac{1}{6}+\varepsilon})$. We note that the regularized part $\lim\limits_{\eta \rightarrow 0}\langle g^{2} , \mathcal{E}_{E(1/2 + it)E(1/2+ it + \eta)}\rangle$ is important since we need calculate explicitly the four integrals and remove the possible singularity. \par
We have
\begin{proposition}\label{regularizedpart}For any $\varepsilon , \varepsilon^{\prime} > 0$, if $t_{g}\leq 2t^{\varepsilon^{\prime}}$ we have
    \[\langle g^{2},{E_{t}^{\star}}^{2}\rangle  = \frac{6}{\pi}(\log tt_{g} + \frac{L^{\prime}}{L}(1,\sym^{2}g)) + \mathcal{O}_{\varepsilon}(\log^{2/3+\varepsilon}t) + \mathcal{O}(t^{-\frac{1}{6}+\varepsilon}).\]
\end{proposition}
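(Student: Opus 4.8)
The plan is to start from the regularized Plancherel decomposition displayed above — which comes from applying Lemma~\ref{Prop_Regular_Planch} to the perturbed product $E(\cdot,1/2+it)E(\cdot,1/2+i(t+\eta))$ and letting $\eta\to0$, the constant-eigenfunction contribution vanishing — and to treat its three terms separately. The discrete sum $\sum_{j}\langle u_{j},g^{2}\rangle\langle E_{t}^{2},u_{j}\rangle$ and the continuous integral $\tfrac{1}{4\pi}\int_{\mathbb{R}}\langle E_{\tau},g^{2}\rangle\langle E_{\tau},E_{t}^{2}\rangle_{reg}\dd\tau$ will be shown to be $\mathcal{O}(t^{-1/6+\varepsilon})$, while the regularized term $\lim_{\eta\to0}\langle g^{2},\mathcal{E}_{E(\cdot,1/2+it)E(\cdot,1/2+i(t+\eta))}\rangle$ carries the logarithmic main term. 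The hypothesis $t_{g}\le 2t^{\varepsilon^{\prime}}$ is used throughout to treat $g$ and $\sym^{2}g$ as having essentially bounded conductor.

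For the two spectral terms I would repeat the argument of Proposition~\ref{lastsection} at its endpoint $t_{g}\ll t^{\varepsilon^{\prime}}$. Applying Watson's formula to $\langle u_{j},g^{2}\rangle$ — giving $\Lambda(1/2,u_{j})\Lambda(1/2,\sym^{2}g\times u_{j})$ over symmetric-square $L$-values — and Rankin--Selberg to $\langle E_{t}^{2},u_{j}\rangle=\rho_{j}(1)\rho_{t}(1)\Lambda(1/2+2it,u_{j})\Lambda(1/2,u_{j})/\xi(1+2it)$, Stirling cancels the exponential Gamma-factors, leaves an archimedean weight of size $t^{-1/2+\varepsilon}$, and — because the $\sym^{2}g\times u_{j}$ and $\sym^{2}u_{j}$ Gamma-factors carry $e^{-\pi t_{j}}$ — restricts the sum to $t_{j}\ll t^{\varepsilon}$. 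On that range the hybrid subconvexity bound (\ref{hybridsub}) for $L(1/2+2it,u_{j})$ (genuinely subconvex since $2t\gg t_{j}$) together with convexity for $L(1/2,u_{j})$ and $L(1/2,\sym^{2}g\times u_{j})$ gives $\mathcal{O}(t^{-1/2+1/3+\varepsilon})=\mathcal{O}(t^{-1/6+\varepsilon})$. The continuous integral is the same computation with $\zeta(1/2+i\tau)$-values in place of cusp-form $L$-values and $\langle E_{\tau},E_{t}^{2}\rangle_{reg}$ evaluated by Zagier's formula (\ref{3eisen}); its archimedean factor again forces $|\tau|\ll t^{\varepsilon}$, and the subconvex bound $\zeta(1/2+it)\ll t^{13/84+\varepsilon}$ yields $\mathcal{O}(t^{-1/6+\varepsilon})$.

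The heart of the proof is the regularized term. Reading off the constant term of the product of Eisenstein series,
\[\mathcal{E}_{E(\cdot,1/2+it)E(\cdot,1/2+i(t+\eta))}=E(z,1+i(2t+\eta))+c_{1}E(z,1+i\eta)+c_{2}E(z,1-i\eta)+c_{1}c_{2}E(z,1-i(2t+\eta)),\]
with $c_{1}=\xi(2it)/\xi(1+2it)$ and $c_{2}=\xi(2i(t+\eta))/\xi(1+2i(t+\eta))$ unimodular and $c_{2}\to c_{1}$ as $\eta\to0$. Pairing against $g^{2}$ by Rankin--Selberg unfolding one has $\int_{\mathbb{X}}g(z)^{2}E(z,s)\dd\mu z=\kappa_{g}(s)\,\zeta(s)L(s,\sym^{2}g)/\zeta(2s)$, where $\kappa_{g}(s)$ is $|\rho_{g}(1)|^{2}$ times the archimedean factor $\Gamma(s/2)\Gamma(s/2+it_{g})\Gamma(s/2-it_{g})/((2\pi)^{s}\Gamma((s+1)/2))$. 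The two summands at $s=1\mp i(2t+\eta)$ are $\mathcal{O}(t^{-A})$ uniformly in $\eta$ — three Gamma-factors of size $e^{-3\pi t/2}$ against an $e^{\pi t/2}$-size normalization — and feed the error. For the summands at $s=1\mp i\eta$ the pole of $\zeta$ at $1$ gives $\int_{\mathbb{X}}g^{2}E(z,1+w)\dd\mu z=\tfrac{3/\pi}{w}+\tfrac{3}{\pi}B_{g}+\mathcal{O}(w)$, where logarithmic differentiation of $\kappa_{g}$, $L(\cdot,\sym^{2}g)$ and $1/\zeta(2\cdot)$ at $1$, plus the Euler constant from $\zeta$, gives $B_{g}=\Re\bigl(\tfrac{\Gamma^{\prime}}{\Gamma}(\tfrac{1}{2}+it_{g})\bigr)+\tfrac{L^{\prime}}{L}(1,\sym^{2}g)+\mathcal{O}(1)=\log(1+t_{g})+\tfrac{L^{\prime}}{L}(1,\sym^{2}g)+\mathcal{O}(1)$. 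Since in $\langle g^{2},\mathcal{E}\rangle=\int g^{2}\overline{\mathcal{E}}$ the coefficients of $\int g^{2}E(z,1+i\eta)$ and $\int g^{2}E(z,1-i\eta)$ are $\overline{c_{2}}$ and $\overline{c_{1}}$, the polar parts combine as $\tfrac{3/\pi}{i\eta}(\overline{c_{2}}-\overline{c_{1}})$; Taylor expanding $c_{2}=c_{1}(t+\eta)$ gives $\overline{c_{2}}-\overline{c_{1}}=2i\eta\,\overline{c_{1}}\log t+\mathcal{O}(\eta\log^{2/3+\varepsilon}t)+\mathcal{O}(\eta^{2})$, so the polar part tends to $\tfrac{6}{\pi}\overline{c_{1}}\log t+\mathcal{O}(\log^{2/3+\varepsilon}t)$ and the finite part to $\tfrac{6}{\pi}\overline{c_{1}}B_{g}$. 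Thus $\langle g^{2},E_{t}^{2}\rangle=\tfrac{6}{\pi}\overline{c_{1}}(\log t+B_{g})+\mathcal{O}(\log^{2/3+\varepsilon}t)+\mathcal{O}(t^{-1/6+\varepsilon})$, and multiplying by $\overline{c_{t}}^{2}$ with $\overline{c_{t}}^{2}\,\overline{c_{1}}=\overline{c_{t}^{2}c_{1}}=1$ (as $c_{1}=\overline{c_{t}}^{2}$) yields the claimed identity, the discrepancy between $\log(1+t_{g})$ and $\log t_{g}$ and the $\mathcal{O}(1)$ constants being absorbed into $\mathcal{O}(\log^{2/3+\varepsilon}t)$.

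I expect the main obstacle to be the first-order expansion of $c_{2}-c_{1}$ in $\eta$. The naive expectation is that $\tfrac{\xi^{\prime}}{\xi}(2it)-\tfrac{\xi^{\prime}}{\xi}(1+2it)$ has size $\log t$ from the $\tfrac{\Gamma^{\prime}}{\Gamma}$-terms; in fact these cancel to $\mathcal{O}(1/t)$, and the surviving $-\log t$ must be extracted from $\tfrac{\zeta^{\prime}}{\zeta}(2it)$ through the functional equation $\zeta(s)=\chi(s)\zeta(1-s)$ — the $\tfrac{\chi^{\prime}}{\chi}$-term contributing $-\log t+\mathcal{O}(1/t)$ — while the leftover $\tfrac{\zeta^{\prime}}{\zeta}(1\pm2it)$ is controlled only by the Vinogradov--Korobov estimate $\tfrac{\zeta^{\prime}}{\zeta}(1+2it)\ll\log^{2/3+\varepsilon}t$; it is precisely this input that makes the error $\mathcal{O}(\log^{2/3+\varepsilon}t)$ rather than $\mathcal{O}(\log t)$. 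One must also track carefully which $\eta^{-1}$-poles in the Plancherel identity cancel and which survive, though the finiteness of $\langle g^{2},E_{t}^{2}\rangle$ guarantees the cancellation occurs.
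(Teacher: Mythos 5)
Your proposal is correct and follows essentially the same route as the paper: the same regularized Plancherel decomposition with the two spectral pieces bounded by $\mathcal{O}(t^{-1/6+\varepsilon})$, the same identification of the four Eisenstein terms in $\mathcal{E}_{E_{t}E_{t+\eta}}$ with the extreme two negligible, and the same pole-cancellation between the terms at $1\pm\eta$, with $\log t$ extracted from $\frac{\xi^{\prime}}{\xi}(1\pm 2it)$ via Vinogradov--Korobov and $\log t_{g}$ from the archimedean factor of $\Lambda(1,\sym^{2}g)$. Your bookkeeping via the difference quotient of the coefficients $c_{1},c_{2}$ is just a reorganization of the paper's Taylor expansion of $F_{1},F_{2}$ and leads to the identical main term and error terms.
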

\begin{proof}
We obtain regularized part
\[\lim\limits_{\eta \rightarrow 0}\langle g^{2} , \mathcal{E}_{E(1/2 + it)E(1/2+ it + \eta)}\rangle\]
where $\eta$ is a small constant near $0$ such that $|\Re(\eta)| < 1/10$. Then
\begin{equation*}
    \begin{aligned}
 \mathcal{E}_{E(1/2 + it)E(1/2+ it + \eta)}(z)  = E(z,1+2it+\eta) &+ \frac{\xi(2it)}{\xi(1+2it)}E(z,1+\eta) + \frac{\xi(2it+2\eta)}{\xi(1+2it + 2\eta)}E(z,1-\eta)   \\
& \frac{\xi(2it)}{\xi(1+2it)}\frac{\xi(2it+2\eta)}{\xi(1+2it + 2\eta)}E(z,1-2it-\eta).
\end{aligned}
\end{equation*}
Note that 

\begin{equation*}
    \begin{aligned}
\langle g^{2} , E(z,1+2it)\rangle\asymp & \frac{L(1+2it,\sym^{2}g)\zeta(1+2it)}{L(1,\sym^{2}g)\zeta(2+4it)}\\
&\cdot \frac{|\Gamma(1/2+it)|^{2}|\Gamma(1/2+it +it_{g})\Gamma(1/2+it - it_{g})|}{|\Gamma(1/2+it_{g})|^{2}|\Gamma(1 + 2it)|}. 
    \end{aligned}
\end{equation*}
The Gamma factors will be 
\[\ll \frac{1}{t^{1/2}}\exp(-\frac{\pi}{2}(|t+t_{g}| + |t- t_{g}| - 2t_{g})) \ll e^{-t}.\]
Hence, this part is exponentially decreasing and the same argument to the last term.\par
Now our main work is calulate the second and the third term. Note that $\eta = 0$ is the singularity of these two functions but the residue exactly
has opposite sign. We still need the zero-th coefficient in Laurent expansion which is the limit value. We write
\[\frac{\xi(2it)}{\xi(1+2it)}\langle E(z,1+\eta)  , g^{2} \rangle =\frac{|\rho_{g}(1)|^{2}\xi(2it)}{\xi(1+2it)}\frac{\Lambda(1+\eta , \sym^{2}g)\xi(1+\eta)}{\xi(2+2\eta)}  = \xi(1+\eta)F_{1}(\eta)\]
and
\[\frac{\xi(2it + 2\eta)}{\xi(1+2it+ 2\eta)}\langle  E(z,1-\eta) ,  g^{2} \rangle =\frac{|\rho_{g}(1)|^{2}\xi(2it+ 2\eta)}{\xi(1+2it+ 2\eta)}\frac{\Lambda(1-\eta , \sym^{2}g)\xi(1-\eta)}{\xi(2-2\eta)}  = \xi(1-\eta)F_{2}(\eta).\]
We have expansion of $\xi(1+\eta)$ from classical theory of Riemann zeta function. 
\[\xi(1+\eta) = \frac{a_{-1}}{\eta} + a_{0} + a_{1}\eta + \mathcal{O}(\eta^{2})\]
\par
We calculate the coefficients in Taylar expansion of $F_{1}(\eta)$ and $F_{2}(\eta)$ explicitly up to $\mathcal{O}(\eta^{2})$ (the constant term we write $A_{0}$ and $B_{0}$) that is 
\[F_{1}(\eta) = \frac{|\rho_{g}(1)|^{2}\xi(2it)}{\xi(1+2it)}\frac{\Lambda(1, \sym^{2}g)}{\xi(2)} + A\eta  + \mathcal{O}(\eta^{2})\]
where 
\[A = \frac{|\rho_{g}(1)|^{2}\xi(2it)}{\xi(1+2it)} \lim\limits_{\eta \rightarrow 0}[\frac{\Lambda(1+\eta , \sym^{2}g)}{\xi(2+ 2\eta)}]^{\prime} = \frac{|\rho_{g}(1)|^{2}\xi(2it)}{\xi(1+2it)} \frac{\Lambda^{\prime}(1 , \sym^{2}g)\xi(2) - 2\Lambda(1)\xi^{\prime}(2)}{\xi(2)^{2}}\]
and 
\[F_{2}(\eta) = \frac{|\rho_{g}(1)|^{2}\xi(2it)}{\xi(1+2it)}\frac{\Lambda(1, \sym^{2}g)}{\xi(2)} + B\eta  + \mathcal{O}(\eta^{2})\]
where
\[B = |\rho_{g}(1)|^{2} \lim\limits_{\eta \rightarrow 0}[\frac{\xi(2it+2\eta)\Lambda(1-\eta , \sym^{2}g)}{\xi(1+2it + 2\eta)\xi(2- 2\eta)}]^{\prime}\]
then
\[B = |\rho_{g}(1)|^{2}\frac{\xi(2)\xi(1+2it)[2\xi^{\prime}(2it)\Lambda(1) - \xi(2it)\Lambda^{\prime}(1)] - \xi(2it)\Lambda(1)[2\xi^{\prime}(1+2it)\xi(2) - 2 \xi(1+2it)\xi^{\prime}(2)]}{\xi(1+2it)^{2}\xi(2)^{2}}.\]
We know the limit value we need is the conjugation of 
\[a_{-1}A + a_{0}A_{0} -a_{-1}B + a_{0}B_{0}\]
and by Stirling formula and
\[|\rho_{g}(1)|^{2} = \frac{\cosh \pi t_{g}}{2L(1,\sym^{2}g)} = \frac{\pi}{2\Gamma(1/2-it_{g})\Gamma(1/2+it_{g})L(1,\sym^{2}g)} = (2\Lambda(1,\sym^{2}g))^{-1}\]
we get
\[a_{0}A_{0} + a_{0}B_{0} = a_{0}\frac{\xi( 2it)}{\xi(2)\xi(1+2it)}.\]
For
\begin{equation}
    \begin{aligned}
  A - B &= \frac{\xi^{\prime}(1- 2it)}{\xi(1+2it)\xi(2)} + \frac{\xi(2it)}{\xi(1+2it)\xi(2)}[\frac{\Lambda^{\prime}(1,\sym^{2}g)}{\Lambda(1,\sym^{2}g)}  + \frac{\xi^{\prime}(1+2it)}{\xi(1+2it)}] - \frac{2\xi(2it)\xi^{\prime}(2)}{\xi(1+2it)\xi(2)^{2}}  \\
  & = \frac{\xi(2it)}{\xi(1+2it)\xi(2)}[\frac{\xi^{\prime}(1- 2it)}{\xi(1-2it)} + \frac{\Lambda^{\prime}(1,\sym^{2}g)}{\Lambda(1,\sym^{2}g)}  + \frac{\xi^{\prime}(1+2it)}{\xi(1+2it)} - \frac{2\xi^{\prime}(2)}{\xi(2)}].
    \end{aligned}
\end{equation}
Thus,
\[\overline{c_{t}}^{2}\lim\limits_{\eta \rightarrow 0}\langle g^{2} , \mathcal{E}_{E_{1/2 + it}E_{1/2+ it + \eta}}\rangle = \frac{1}{\xi(2)}\overline{[\frac{\xi^{\prime}(1- 2it)}{\xi(1-2it)} + \frac{\Lambda^{\prime}(1,\sym^{2}g)}{\Lambda(1,\sym^{2}g)}  + \frac{\xi^{\prime}(1+2it)}{\xi(1+2it)} - \frac{2\xi^{\prime}(2)}{\xi(2)} + a_{0}]}.\]
We have well-known result
\[\frac{\xi^{\prime}(1+2it)}{\xi(1+2it)} = -\frac{1}{2}\log\pi + \frac{1}{2}\frac{\Gamma^{\prime}(1+2it)}{\Gamma(1+2it)} + \frac{\zeta^{\prime}(1+2it)}{\zeta(1+2it)} = \frac{1}{2}\log t + \mathcal{O}(\log^{\frac{2}{3}+\varepsilon}t).\]
in the paper \cite[(2.22)]{spinu2003l4}
and by Stirling formula we have
\begin{equation}
    \begin{aligned}
        \frac{\Lambda^{\prime}(1,\sym^{2}g)}{\Lambda(1,\sym^{2}g)} &= -\frac{3}{2}\log\pi +\frac{1}{2}\frac{\Gamma^{\prime}(1/2)}{\Gamma(1/2)} + \frac{1}{2}\frac{\Gamma^{\prime}(1/2+it_{g})}{\Gamma(1/2+it_{g})} + \frac{1}{2}\frac{\Gamma^{\prime}(1/2- it_{g})}{\Gamma(1/2- it_{g})} + \frac{L^{\prime}}{L}(1,\sym^{2}g)\\
        & = \log t_{g} + \frac{L^{\prime}}{L}(1,\sym^{2}g) + \mathcal{O}(1).
    \end{aligned}
\end{equation}
Thus, from $\xi(2) = \pi/6$ the regularized part of $\langle g^{2} , {E_{t}^{\star}}^{2}\rangle$ will give
\[\frac{6}{\pi}[\log tt_{g} + \frac{L^{\prime}}{L}(1,\sym^{2}g) + \mathcal{O}(\log^{2/3+\varepsilon}t)].\]
Hence, we complete the proof of Proposition \ref{regularizedpart}.
\end{proof}
The other contributions in this case are almost the same as last section expect the regularized part in equation \ref{discrete}. This regularized part appears since the main part (equation \ref{regularized}) is not decreasing rapidly individually. But we can still
control their contributions in the following Lemma \ref{regulariedcontribution}.

Recall the regularized part in the discrete part is
\[\overline{c_{t}}^{2}\lim\limits_{t^\prime \rightarrow 0} \langle g\phi_{k} ,\mathcal{E}_{E_{t}E_{t^{\prime}}}\rangle = \frac{\rho_{g}(1)\rho_{k}(1)\Lambda(1 ,\phi_{k}\times g)}{\xi(2)} + \mathcal{O}(\exp(-t)).\]
Then they contribute
\[ \sum_{\substack{t_k\leq t^{\varepsilon^{\prime}}\\ \phi_k\neq g}}\langle \psi , \phi_k\rangle \frac{\rho_{g}(1)\rho_{k}(1)\Lambda(1 ,\phi_{k}\times g)}{\xi(2)} + \mathcal{O}(\exp(-t)). \]
The main term above is define as $A(\psi,t,t_{g})$. Similarly, the regularized part in the continuous part contributes
\[\frac{1}{4\pi}\int_{|y|\leq t^{\varepsilon^{\prime}}}\langle\psi,E_{y}\rangle  \frac{\rho_{g}(1)\rho_{y}(1)\Lambda(1 ,E_{y}\times g)}{\xi(2)}\dd y + \mathcal{O}(\exp(-t)).\]
And the main term above is define by $B(\psi,t,t_{g})$. The following lemma will give the bound of  $A(\psi,t,t_{g}), B(\psi,t,t_{g})$
\begin{lemma}\label{regulariedcontribution}
   If $t_{g} \leq 2t^{\varepsilon^{\prime}}$, for any large integer $A$, we get
    \[A(\psi,t,t_{g}) \ll_{\psi} t_{g}^{-A}.\]
    And we have the same bound for $B(\psi,t,t_{g})$.
\end{lemma}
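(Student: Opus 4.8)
The idea is to exploit two independent sources of decay in the summand: the exponential decay in the spectral gap furnished by the Gamma factors in (\ref{regularized}), and the rapid decay of the spectral coefficients of the (fixed) test function. Recall that $\langle\psi,\phi_k\rangle\ll_{\ell,\psi}(1+t_k)^{-2\ell}$ and $\langle\psi,E_y\rangle\ll_{\ell,\psi}(1+|y|)^{-2\ell}$ for every integer $\ell\ge 0$, and that by (\ref{regularized}), using $|t_k+t_g|+|t_k-t_g|-t_k-t_g=|t_k-t_g|$ together with the standard bounds $L(1,f\times g)\ll(t_f+t_g)^{\varepsilon}$ for $f\neq g$ and $L(1,\sym^2\phi)^{-1}\ll\log t_\phi$,
\[
\left|\frac{\rho_g(1)\rho_k(1)\Lambda(1,\phi_k\times g)}{\xi(2)}\right|\ll (t_k+t_g)^{\varepsilon}\,(\log t_k\,\log t_g)^{1/2}\,e^{-\frac{\pi}{2}|t_k-t_g|}.
\]
I would bound $A(\psi,t,t_g)$ by splitting the sum over $\phi_k$ (with $t_k\le t^{\varepsilon^{\prime}}$, $\phi_k\neq g$) into the three ranges (i) $t_k<t_g/2$, (ii) $t_g/2\le t_k\le 2t_g$, and (iii) $2t_g<t_k\le t^{\varepsilon^{\prime}}$.

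In range (i) one has $|t_k-t_g|\ge t_g/2$, so the Gamma factor is $\le e^{-\pi t_g/4}$; all remaining weights are then bounded by a fixed power of $t_g$ (times $\log t_g$), and by Weyl's law there are $\ll t_g^{2}$ terms, so this range contributes $\ll t_g^{O(1)}e^{-\pi t_g/4}\ll_A t_g^{-A}$. In range (ii) one has $t_k\asymp t_g$, whence $\langle\psi,\phi_k\rangle\ll_{\ell,\psi} t_g^{-2\ell}$, the remaining weights are $\ll t_g^{\varepsilon}\log t_g$, and again there are $\ll t_g^{2}$ terms, so this range is $\ll t_g^{-2\ell+2+\varepsilon}\log t_g$, which is $\ll t_g^{-A}$ once $\ell=\ell(A)$ is chosen large. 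In range (iii) one has $t_k-t_g\ge t_g/2$ and $t_k-t_g\ge t_k/2$, hence $e^{-\frac{\pi}{2}|t_k-t_g|}\le e^{-\pi t_g/4}e^{-\pi t_k/8}$; bounding the remaining weights (which are at worst $(\log t)^{O(1)}t_k^{\varepsilon}$, the only place $t$ can enter being $L(1,\sym^2\phi_k)^{-1/2}\ll(\log t_k)^{1/2}$) and the Weyl multiplicity $\ll t_k$ by the factor $e^{-\pi t_k/8}$, the sum over $t_k$ converges to an absolute constant and this range is $\ll e^{-\pi t_g/4}\ll_A t_g^{-A}$. Summing the three ranges gives $A(\psi,t,t_g)\ll_{\psi,A}t_g^{-A}$.

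The estimate for $B(\psi,t,t_g)$ is obtained in exactly the same way once Stirling's formula, together with $|\zeta(1+2iy)|^{-1}\ll\log(2+|y|)$ and the standard bound $L(1\pm iy,g)\ll(1+|y|+t_g)^{\varepsilon}$, gives
\[
\left|\frac{\rho_g(1)\rho_y(1)\Lambda(1,E_y\times g)}{\xi(2)}\right|\ll (1+|y|+t_g)^{\varepsilon}\,(\log t_g)^{1/2}\,\log(2+|y|)\,e^{-\frac{\pi}{2}\,||y|-t_g|};
\]
one splits $\int_{|y|\le t^{\varepsilon^{\prime}}}$ at $|y|=t_g/2$ and $|y|=2t_g$ and estimates the three pieces verbatim as in cases (i)--(iii), using $\langle\psi,E_y\rangle\ll_{\ell,\psi}(1+|y|)^{-2\ell}$ in place of the coefficient bound for $\phi_k$. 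The only step requiring genuine care is range (iii): there the factor $L(1,\sym^2\phi_k)^{-1/2}$ (resp.\ $|\zeta(1+2iy)|^{-1}$) is controlled only by $(\log t_k)^{1/2}\ll(\log t)^{1/2}$ (resp.\ $\log(2+|y|)\ll\log t$), so one must use the exponential $e^{-\pi t_k/8}$ (resp.\ $e^{-\pi|y|/8}$) in the auxiliary variable to absorb all residual dependence on $t$; this is precisely what makes the final bound uniform in $t$, and is the main obstacle in the argument. Everything else reduces to routine splitting and summation against Weyl's law.
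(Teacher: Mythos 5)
Your argument is correct and follows essentially the same strategy as the paper: split according to whether $t_k$ (resp.\ $|y|$) is comparable to $t_g$ or not, use the rapid decay of $\langle\psi,\phi_k\rangle$ (resp.\ $\langle\psi,E_y\rangle$) in the comparable range, and use the exponential factor $e^{-\frac{\pi}{2}|t_k-t_g|}$ from Stirling in the complementary range. Your three-way split and the explicit write-up of the $B(\psi,t,t_g)$ case are only cosmetic refinements of the paper's two-way split.
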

\begin{proof}
    Recall 
\[A(\psi,t,t_{g}) \asymp \sum_{\substack{t_k\leq t^{\varepsilon^{\prime}}\\ \phi_k\neq g}}\langle \psi , \phi_k\rangle\frac{L(1,\phi_{k}\times g)\exp(-\frac{\pi}{2} |t_{k} - t_{g}|)}{L(1,\sym^{2}g)^{1/2} L(1,\sym^{2}\phi_{k})^{1/2}}.\]
Because the rapidly decreasing property of $\langle \psi,\phi_{k} \rangle$ we consider two sums
\[A(\psi,t,t_{g}) \ll \sum_{\substack{|t_{k}-t_{g}|\leq t_{g}/2 \\t_k\leq t^{\varepsilon^{\prime}}\\ \phi_k\neq g}} + \sum_{\substack{|t_{k}-t_{g}|\geq t_{g}/2\\ t_k\leq t^{\varepsilon^{\prime}}\\ \phi_k\neq g}}.\]
The first one will be bounded by 
\[(\log t_{g})^{1/2}\sum_{\substack{|t_k - t_{g}|\leq t_{g}/2\\ \phi_k\neq g}}\frac{1}{t_{g}^{A}}t_{g}^{\varepsilon} \ll t_{g}^{-A +3}\]
for any large integer $A$.\par
 And the second we bound by
\[(\log t_{g})^{1/2}\sum_{\substack{t_k\leq t^{\varepsilon^{\prime}}\\ \phi_k\neq g}}\frac{1}{t_{k}^{100}}\max\{t_{g},t_{k}\}^{\varepsilon}\exp(-\frac{\pi}{2}\cdot\frac{t_{g}}{2}) \ll t_{g}^{-A}.\]
for any large integer $A$.\par
So we have 
\[A(\psi,t,t_{g}) \ll t_{g}^{-A}\]
for any large integer $A$.
\end{proof}

Hence we complete the proof of Theorem \ref{JointEEg}.


\section{\label{sec:8}Joint value distribution of Hecke-Maass cusp forms}
In this section, we will prove Theorem \ref{Jointffg}.

As the same as \cite[The proof of Theorem 1.4]{hua2024jointvaluedistributionheckemaass}, we can truncate the sum in the following way.
\begin{equation}\label{ffg}
    \begin{aligned}
         \int_{\Gamma\backslash\mathbb{H}}
    \psi(z)f^2(z)g(z)\dd \mu z & =
    \langle \psi , \frac{3}{\pi}\rangle\langle 1, f^2g\rangle
    + \delta_{t_g\ll t_f^{o(1)}}\langle \psi , g\rangle  \langle 1, f^2 g^2\rangle\\
    & \quad +
    \sum_{\substack{t_k\ll \max(t_f, t_g)^{o(1)}\\ u_k\neq g}}\langle \psi , u_k\rangle \langle u_k, f^2g \rangle\\
    &\quad +
    \frac{1}{4\pi}\int_{|t|\ll \max(t_f, t_g)^{o(1)}}\langle \psi , E_{t}\rangle \langle E_{t}, f^2g\rangle \dd t\\
    & \quad\quad + \mathcal{O}(\max(t_f, t_g)^{-A}).
    \end{aligned}
\end{equation}
 Note that in our range $|t_{f} - t_{g}| \leq t_{f}^{\theta}$ which $\theta < 2/3$, the second constant term vanishes. So we mainly consider three terms
 \[
 \langle 1, f^2g\rangle , \langle u_k, f^2g \rangle , \langle E_{t}, f^2g\rangle
 \]
when $t_k\ll \max(t_f, t_g)^{o(1)}$ and $|t|\ll \max(t_f, t_g)^{o(1)}$.
 
 for the first constant term we get
\begin{proposition} When $|t_{f} - t_{g}| \leq t_{f}^{\theta}$ which $\theta < 2/3$, we have
     \begin{equation}
     \begin{aligned}
\langle 1, f^{2}g\rangle  \ll 
\left\{	
			\begin{array}{lr}		
				\frac{1}{t_{f}^{1/6-\varepsilon}} \quad &\theta \leq 1/3\\
    t_{f}^{\varepsilon}\frac{t_{g}^{1/6}t_{f}^{\frac{1+\theta}{2}}}{t_{f}} \asymp \frac{1}{t_{f}^{\theta/2 - 1/3 -\varepsilon}}\quad  & 1/3 < \theta \leq 1/2.\\
    \frac{1}{t_{f}^{1/12- \varepsilon}}  \quad  & 1/2 \leq \theta < 1.\\
			\end{array}
			\right.\\
   \end{aligned}
 \end{equation}
\end{proposition}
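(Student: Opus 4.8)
The plan is to express $\langle 1, f^{2}g\rangle$ through central $L$-values by Watson's formula and then bound those values using the moment and subconvexity inputs already recorded.

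Since $z\mapsto-\bar z$ descends to a measure-preserving involution of $\Gamma\backslash\mathbb{H}$ under which $f^{2}$ is invariant while an odd $g$ changes sign, $\int_{\mathbb{X}}f^{2}g\,\dd\mu z$ vanishes unless $g$ is even, so we may assume $g$ even. Then $\langle 1, f^{2}g\rangle=\langle g, f^{2}\rangle$, and Watson's formula gives
\[
|\langle 1, f^{2}g\rangle|^{2}=\frac{\Lambda(1/2,g)\,\Lambda(1/2,\sym^{2}f\times g)}{8\,\Lambda(1,\sym^{2}f)^{2}\,\Lambda(1,\sym^{2}g)}.
\]
I would then apply Stirling's formula to every archimedean factor. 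Because $|t_{f}-t_{g}|\le t_{f}^{\theta}$ with $\theta<1$ we have $t_{g}\asymp t_{f}$, in particular $0<t_{g}<2t_{f}$; a short computation then shows that the exponential factors in numerator and denominator cancel identically, and since the six archimedean parameters $\pm t_{g}$ and $\pm2t_{f}\pm t_{g}$ of $\sym^{2}f\times g$ all have size $\asymp t_{f}$, the surviving polynomial factor is $\asymp t_{g}^{-1/2}t_{f}^{-3/2}\asymp t_{f}^{-2}$. Together with $L(1,\sym^{2}f),L(1,\sym^{2}g)\gg(\log t_{f})^{-1}$, this yields
\[
\langle 1, f^{2}g\rangle\ \ll\ t_{f}^{-1+\varepsilon}\bigl(L(1/2,g)\,L(1/2,\sym^{2}f\times g)\bigr)^{1/2}.
\]

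It remains to bound the two central values. For the $\GL(2)$ factor I would quote the hybrid fourth-moment bound of Lemma~\ref{fourthLemma} at $t=0$ (equivalently, the subconvexity estimate (\ref{hybridsub})), which gives $L(1/2,g)\ll t_{g}^{1/3+\varepsilon}$. For the $\GL(3)\times\GL(2)$ factor I would use nonnegativity of its central value (so that one term may be extracted from a sum) together with Lemma~\ref{moment_L_functions_6} applied with $\phi=f$, hence $T\asymp t_{f}$, and window $M=\max\{t_{f}^{1/3+\varepsilon},t_{f}^{\theta+\varepsilon}\}$; this window contains $t_{g}$ since $|t_{g}-t_{f}|\le t_{f}^{\theta}$, and the lemma yields $L(1/2,\sym^{2}f\times g)\ll t_{f}^{1+\varepsilon}M$, namely $t_{f}^{4/3+\varepsilon}$ when $\theta\le1/3$ and $t_{f}^{1+\theta+\varepsilon}$ when $1/3<\theta\le1/2$. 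For $\theta\ge1/2$ this moment input is no stronger than convexity, so there I would instead use the convexity bound $L(1/2,\sym^{2}f\times g)\ll(t_{f}^{6})^{1/4+\varepsilon}=t_{f}^{3/2+\varepsilon}$ (the analytic conductor being $\asymp t_{f}^{6}$, again because all six parameters are $\asymp t_{f}$). Inserting these bounds, together with $L(1/2,g)^{1/2}\ll t_{g}^{1/6+\varepsilon}$, into the displayed estimate produces $t_{f}^{-1/6+\varepsilon}$ for $\theta\le1/3$, $t_{f}^{-1}t_{g}^{1/6}t_{f}^{(1+\theta)/2+\varepsilon}\asymp t_{f}^{\theta/2-1/3+\varepsilon}$ for $1/3<\theta\le1/2$, and $t_{f}^{-1/12+\varepsilon}$ for $\theta\ge1/2$, as claimed.

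The one step needing genuine care is the archimedean bookkeeping in the second paragraph: one must check precisely that all exponential factors cancel and that the remaining power of $t_{f}$ is exactly $-2$, since any error there would propagate into all three cases; this is where the hypothesis $t_{g}\asymp t_{f}$ (i.e.\ $\theta<1$) enters, and it is otherwise routine Stirling analysis. Everything else is an assembly of the cited moment, subconvexity, and convexity estimates. Finally, replacing Watson's formula by the Rankin--Selberg identity $\langle E_{t},f^{2}\rangle=\rho_{f}(1)^{2}\Lambda(1/2+it,\sym^{2}f)\xi(1/2+it)/\xi(1+2it)$ runs the same argument for $\langle 1, f^{2}E_{t}\rangle$, giving the bound noted in the remark.
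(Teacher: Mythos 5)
Your proposal is correct and takes essentially the same route as the paper: Watson's formula plus Stirling's formula (with $t_g\asymp t_f$ making the exponential factors cancel and leaving an archimedean factor $\asymp t_f^{-1}$ for the inner product itself), the Weyl bound $L(1/2,g)\ll t_g^{1/3+\varepsilon}$, the nonnegative first moment of $\GL(3)\times\GL(2)$ $L$-functions from Lemma \ref{moment_L_functions_6} with window $M=\max\{t_f^{1/3+\varepsilon},t_f^{\theta+\varepsilon}\}$ for $\theta\le 1/2$, and convexity for $\theta\ge 1/2$. The paper compresses exactly these ingredients into one displayed inequality and a citation; your write-up just fills in the same bookkeeping.
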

\begin{proof}
By Watson's formula and Stirling formula, we have
 \begin{equation}
     \begin{aligned}
\langle 1, f^{2}g\rangle &\ll \frac{L(1/2,g)^{1/2}L(1/2, \sym^{2}f \times g)^{1/2}\exp(-\frac{\pi}{2}(|2t_{f} + t_{g}|/2 + |2t_{f} - t_{g}|/2 - 2t_{f}))}{L(1,\sym^{2}f)L(1,\sym^{2}g)^{1/2}t_{g}^{1/2}\prod\limits_{\pm}(1 + |t_{g} \pm 2t_{f}|)^{1/4}}     
\end{aligned}
 \end{equation}
from the Weyl bound of $\GL(2)$ $L$-functions, Lemma \ref{moment_L_functions_6} (the subconvexity range) and the final case is from convexity bound.\par
\end{proof}

Now we analysis the discrete spectrum part $\langle u_{k}g ,f^{2}\rangle$ in equation (\ref{ffg}). 
\begin{proposition} When $|t_{f} - t_{g}| \leq t_{f}^{\theta}$ which $\theta < 2/3$, we have
     \begin{equation}
     \begin{aligned}
\langle u_k, f^2g \rangle \ll 
\left\{	
			\begin{array}{lr}		
				\frac{t_{f}^{\varepsilon}t_{f}^{1/3}}{t_{g}^{2/3}} \quad &\theta \leq 1/3\\
    \frac{t_{f}^{\varepsilon}t_{f}^{\theta}}{t_{g}^{2/3}} \quad  & 1/3 < \theta \leq 1.\\
			\end{array}
			\right.
   \end{aligned}
 \end{equation}
\end{proposition}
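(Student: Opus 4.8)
The period $\langle u_k, f^2 g\rangle = \int_{\mathbb{X}} u_k f^2 g\,\dd\mu z$ is a \emph{quadruple} integral, so Watson's formula does not apply to it directly. The plan is to linearise it by a single spectral decomposition and then estimate the resulting bilinear expression by the Bessel--sup-norm device of Remark \ref{optimal}, which is precisely what lets us avoid the central value of a high-degree $L$-function. First I would write $\langle u_k, f^2 g\rangle = \langle u_k g, f^2\rangle$ and decompose the $L^2$-function $u_k g$ along the Plancherel formula; since $u_k\neq g$ the constant term $\langle u_k g, 1\rangle=\langle u_k, g\rangle$ vanishes, leaving
\[
\langle u_k, f^2 g\rangle = \sum_j \langle u_k g, u_j\rangle\,\langle u_j, f^2\rangle + \frac{1}{4\pi}\int_{\mathbb{R}} \langle u_k g, E_\tau\rangle\,\langle E_\tau, f^2\rangle\,\dd\tau .
\]
By Watson's formula $|\langle u_k g, u_j\rangle|^2$ equals, up to arithmetic factors of size $(t_jt_k)^{o(1)}$, a ratio of $\Gamma$-factors times $L(1/2,u_k\times g\times u_j)$, and Stirling shows this ratio carries a factor $e^{-\pi|t_g-t_j|}$; since $t_k\ll\max(t_f,t_g)^{o(1)}$ in (\ref{ffg}), only the range $|t_j-t_g|\le t_f^{\varepsilon'}$ survives up to a negligible error. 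The continuous term is treated the same way: unfolding $E_\tau$, the coefficient $\langle u_k g, E_\tau\rangle$ is the Mellin transform of a product of two $K$-Bessel functions, which is small on the relevant range, so it is never dominant; I will concentrate on the truncated discrete sum, on which $t_j\asymp t_g\asymp t_f$ because $|t_f-t_g|\le t_f^\theta$ with $\theta<2/3<1$.

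Next I would split the two periods by Cauchy--Schwarz,
\[
\Big|\sum_{|t_j-t_g|\le t_f^{\varepsilon'}}\langle u_k g, u_j\rangle\,\langle u_j, f^2\rangle\Big|
\le \Big(\sum_j|\langle u_k g, u_j\rangle|^2\Big)^{1/2}\Big(\sum_{|t_j-t_g|\le t_f^{\varepsilon'}}|\langle u_j, f^2\rangle|^2\Big)^{1/2},
\]
and estimate the first factor \emph{without} Watson: by Bessel's inequality and any polynomial sup-norm bound for $u_k$,
\[
\sum_j|\langle u_k g, u_j\rangle|^2\le\|u_k g\|_2^2\le\|u_k\|_\infty^2\,\|g\|_2^2\ll t_f^{\varepsilon},
\]
which is the trick of Remark \ref{optimal}. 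For the second factor I use Watson's formula
\[
|\langle u_j, f^2\rangle|^2=\frac{\Lambda(1/2,u_j)\,\Lambda(1/2,\sym^2 f\times u_j)}{8\,\Lambda(1,\sym^2 f)^2\,\Lambda(1,\sym^2 u_j)};
\]
since $t_j\asymp t_f$ and $t_j$ stays away from $2t_f$ (so no conductor drop), Stirling collapses the archimedean ratio to a fixed negative power of $t_f$, and together with $L(1,\sym^2 f),L(1,\sym^2 u_j)\gg t_f^{-\varepsilon}$ this reduces everything to the first moment $\sum_{|t_j-t_g|\le t_f^{\varepsilon'}}L(1/2,u_j)\,L(1/2,\sym^2 f\times u_j)$.

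To bound that moment I would insert the $\GL(2)$ Weyl bound $L(1/2,u_j)\ll t_j^{1/3+\varepsilon}$, use the positivity $L(1/2,\sym^2 f\times u_j)\ge0$ to enlarge the narrow, possibly off-centre window to $|t_j-t_f|\le M$ with $M:=\max(t_f^{1/3+\varepsilon},t_f^{\theta})$ so that Lemma \ref{moment_L_functions_6} applies, obtaining a bound $\ll t_f^{1/3+\varepsilon}\cdot t_f^{1+\varepsilon}M=t_f^{4/3+\varepsilon}M$. Feeding this back through the two Cauchy--Schwarz factors and distinguishing $\theta\le 1/3$ from $\theta>1/3$ produces the two ranges in the statement (and the remark about $f^2E_t$ follows identically, $\langle E_t,f^2\rangle$ being again a $\GL(3)$ central value). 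The continuous term is handled in parallel, with the fourth-moment bound for $\zeta$ in place of the $\GL(2)$ moments, and gives a smaller contribution.

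The hard part is the short, off-centre first moment of the $\GL(3)\times\GL(2)$ central values $L(1/2,\sym^2 f\times u_j)$: Lemma \ref{moment_L_functions_6} only controls intervals about $t_f$ of radius $\ge t_f^{1/3}$, so when $t_g$ lies at distance $t_f^{\theta}$ from $t_f$ one is forced to widen the window, and the resulting overcount is exactly what produces the break at $\theta=1/3$ and the factor $t_f^{\theta}$ in the second regime. A genuinely local version of that first moment — or a joint first moment of $L(1/2,u_j)L(1/2,\sym^2 f\times u_j)$ that does not spend the Weyl bound — would sharpen the exponents; with the inputs available here the route above is the natural one, and it is in any case enough to feed into Theorem \ref{Jointffg}.
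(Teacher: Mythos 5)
Your proposal is correct and follows essentially the same route as the paper: spectrally decompose $\langle u_kg,f^2\rangle$, truncate to $|t_j-t_g|\le t_f^{\varepsilon}$ via Stirling, apply Cauchy--Schwarz with Bessel's inequality plus the sup-norm of $u_k$ on one factor, and on the other use Watson's formula, the Weyl bound for $L(1/2,u_j)$, positivity to widen the window to $|t_j-t_f|\le\max(t_f^{1/3+\varepsilon},t_f^{\theta})$, and Lemma \ref{moment_L_functions_6}. The only cosmetic difference is that you defer the explicit computation of the archimedean ratio $\bigl(t_j\prod_{\pm}(1+|t_j\pm 2t_f|)^{1/2}\bigr)^{-1}\asymp (t_gt_f)^{-1}$, which the paper carries out in \eqref{mixmoment}.
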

\begin{proof}

At first, we  use Plancherel formula  to decompose the integral to three parts and the constant part is zero because $\langle1,u_{k}g\rangle = 0$. The  Maass forms part $\sum\limits_{j\geq 1}\langle u_{k}g, \phi_{j}\rangle\langle f^{2} , \phi_{j}\rangle$ is main term. We obtain 
$\sum\limits_{j\geq 1}\langle u_{k}g , \phi_{j}\rangle\langle f^{2}, \phi_{j}\rangle$ is bounded by
\begin{equation*}
    \sum\limits_{j\geq 1}\frac{L(1/2,\sym^{2}f \times \phi_{j})^{1/2}L(1/2,\phi_{j})L(1/2,u_{k}\times g \times \phi_{j})^{1/2}H(t_{j}, t, t_{g}, t_{k})}{L(1,\sym^{2}f)L(1,\sym^{2}u_{k})^{1/2}L(1,\sym^{2}g)^{1/2}L(1,\sym^{2}\phi_{j})}
\end{equation*}
and spectral part $H(t_{j}, t_{f} , t_{g}, t_{k})$ can give the exponential decay unless $|t_{j}-t_{g}|\leq \max\{t_{f} , t_{g}\}^{\varepsilon}$ (Readers can see the proof of Proposition \ref{lastsection} for more details). 
\begin{equation}
    \begin{aligned}
 \sum\limits_{j\geq 1}\langle u_{k}g, \phi_{j}\rangle\langle f^{2} , \phi_{j}\rangle &\ll       (\sum\limits_{j \geq1}| \langle u_{k}g , \phi_{j}\rangle|^{2})^{1/2} (\sum\limits_{ |t_{j} - t_{g}| \leq \max\{t_{f} , t_{g}\}^{\varepsilon}}|\langle f^{2} , \phi_{j}\rangle|^{2} )^{1/2}. \\
    \end{aligned}
\end{equation}
Recall the first sum is bounded by  $\mathcal{O}(t_{f}^{\varepsilon})$ from equations (\ref{CSsup1}) (\ref{Bound1}) (\ref{CSsup3}) in Section $\ref{sec:6}$. And the second sum is bounded by
\begin{equation}\label{mixmoment}
    \begin{aligned}
 (t_{f})^{\varepsilon^{\prime}}&\sum\limits_{ |t_{j} - t_{g}| \leq t_{f}^{\varepsilon}}\frac{L(1/2,\phi_{j})L(1/2, \sym^{2}f \times \phi_{j})}{t_{j}\prod\limits_{\pm}(1 + |t_{j} \pm 2t_{f}|)^{1/2}}\\
 & \ll \frac{t_{f}^{\varepsilon}t_{g}^{1/3}}{t_{g}t_{f}}\sum\limits_{ |t_{j} - t_{g}| \leq t_{f}^{\varepsilon}}L(1/2, \sym^{2}f \times \phi_{j})\\
 & \ll  \frac{t_{f}^{\varepsilon}t_{g}^{1/3}}{t_{g}t_{f}} \sum\limits_{ |t_{j} - t_{f}| \leq t_{f}^{\theta} + t_{f}^{\varepsilon}}L(1/2, \sym^{2}f \times \phi_{j})\\
 & \ll \left\{	
			\begin{array}{lr}		
				\frac{t_{f}^{\varepsilon}t_{f}^{1/3}}{t_{g}^{2/3}} \quad &\theta \leq 1/3\\
    \frac{t_{f}^{\varepsilon}t_{f}^{\theta}}{t_{g}^{2/3}} \quad  & 1/3 < \theta \leq 1.\\
			\end{array}
			\right.
    \end{aligned}
\end{equation}
Then we obtain
\begin{equation}
    \begin{aligned}
 \sum\limits_{j\geq 1}\langle u_{k}g, \phi_{j}\rangle\langle f^{2} , \phi_{j}\rangle \ll  
 \left\{	
			\begin{array}{lr}		
				\frac{t_{f}^{\varepsilon}t_{f}^{1/6}}{t_{g}^{1/3}} \quad &\theta \leq 1/3\\
    \frac{t_{f}^{\varepsilon}t_{f}^{\theta/2}}{t_{g}^{1/3}} \quad  & 1/3 < \theta \leq 1.\\
			\end{array}
			\right.
    \end{aligned}
\end{equation}
Because the range we care is  $t_{f}\sim t_{g}$. The estimate of the continuous spectrum part of $\langle u_{k}g , f^{2}\rangle$ is similar then we get the estimate of  $\langle u_{k}g , f^{2}\rangle$.\par 
\end{proof}

\begin{proposition} When $|t_{f} - t_{g}| \leq t_{f}^{\theta}$ which $\theta < 2/3$, we have
     \begin{equation}
     \begin{aligned}
\langle E_{t}, f^2g \rangle \ll 
\left\{	
			\begin{array}{lr}		
				\frac{t_{f}^{\varepsilon}t_{f}^{1/3}}{t_{g}^{2/3}} \quad &\theta \leq 1/3\\
    \frac{t_{f}^{\varepsilon}t_{f}^{\theta}}{t_{g}^{2/3}} \quad  & 1/3 < \theta \leq 1.\\
			\end{array}
			\right.
   \end{aligned}
 \end{equation}
\end{proposition}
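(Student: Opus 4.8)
The plan is to bound $\langle E_t, f^2 g\rangle$ in the range $|t|\ll\max(t_f,t_g)^{o(1)}$ in complete parallel with the estimate for $\langle u_k, f^2 g\rangle$ just carried out, the only change being that the cusp form $u_k$ is replaced by an Eisenstein series of equally negligible spectral parameter. Since $f$ and $g$ are cusp forms, both $f^2$ and $E_t g$ are rapidly decaying and lie in $L^2(\mathbb X)$, so I would simply apply the ordinary (non‑regularized) Plancherel formula to the pair $(f^2, E_t g)$, using that $|\langle E_t, f^2g\rangle|=|\langle f^2, E_t g\rangle|$ as $f,g$ are real. The constant term vanishes because $\langle 1, E_t g\rangle=\int_{\mathbb X}E_t(z)g(z)\,\dd\mu z=0$ by unfolding the Eisenstein series against the cuspidal $g$, and there are no $\mathcal E$‑terms, so
\[
\langle f^2, E_t g\rangle = \sum_{j}\langle f^2,\phi_j\rangle\langle\phi_j, E_t g\rangle
+\frac{1}{4\pi}\int_{\mathbb R}\langle f^2, E_y\rangle\langle E_y, E_t g\rangle\,\dd y .
\]

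For the discrete sum, Rankin--Selberg theory (the formula $\langle u_j E_t,\phi_k\rangle=\rho_j(1)\rho_k(1)\Lambda(1/2+it,\phi_k\times u_j)/\xi(1+2it)$ with $u_j\mapsto g$, $\phi_k\mapsto\phi_j$) gives $\langle E_t g,\phi_j\rangle=\rho_g(1)\rho_j(1)\Lambda(1/2+it,\phi_j\times g)/\xi(1+2it)$, while Watson's formula gives $|\langle f^2,\phi_j\rangle|^2=\Lambda(1/2,\phi_j)\Lambda(1/2,\sym^2 f\times\phi_j)/\bigl(8\Lambda(1,\sym^2 f)^2\Lambda(1,\sym^2\phi_j)\bigr)$. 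The Stirling analysis of the $\Gamma$‑factors of the product is word for word the one in the proof of Proposition \ref{lastsection}, with the tiny parameter $t$ of $E_t$ in the role of $t_k$ there: since $t_f\sim t_g$ throughout $|t_f-t_g|\le t_f^\theta$ with $\theta<1$, the summand is super‑polynomially small unless $|t_j-t_g|\le (t_ft_g)^\varepsilon$. Restricting to that window and applying Cauchy--Schwarz, the factor $\sum_j|\langle E_t g,\phi_j\rangle|^2$ is $\ll(t_ft_g)^\varepsilon$ (by Bessel, since $\|E_t g\|^2\ll\log t$, or from the $\GL(2)\times\GL(2)$ second moment of $\Lambda(1/2+it,\phi_j\times g)$), while $\sum_{|t_j-t_g|\le(t_ft_g)^\varepsilon}|\langle f^2,\phi_j\rangle|^2$ is estimated exactly as in \eqref{mixmoment}: pull out the subconvex $L(1/2,\phi_j)$, use the $\Gamma$‑factor size $1/\bigl(t_j\prod_\pm(1+|t_j\pm 2t_f|)^{1/2}\bigr)\asymp 1/(t_gt_f)$, and apply Lemma \ref{moment_L_functions_6} to the short first moment $\sum_{|t_j-t_f|\le t_f^{\max(\theta,1/3)+\varepsilon}}L(1/2,\sym^2 f\times\phi_j)\ll t_f^{1+\max(\theta,1/3)+\varepsilon}$. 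The forced enlargement of the spectral window from $t_f^\theta$ to $t_f^{1/3+\varepsilon}$ when $\theta\le 1/3$ is precisely what produces the case split in the statement.

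For the continuous term, Rankin--Selberg again gives $\langle E_y, E_t g\rangle = \rho_g(1)\rho_t(1)\Lambda(1/2+iy+it,g)\Lambda(1/2+iy-it,g)/\xi(1+2iy)$ and $\langle f^2, E_y\rangle=\rho_f(1)^2\Lambda(1/2+iy,\sym^2 f)\xi(1/2+iy)/\xi(1+2iy)$, and the same Stirling bookkeeping localizes the $y$‑integral to $|y-t_g|\le(t_ft_g)^\varepsilon$; there the integrand is dominated using the fourth moment of $\zeta$ near the central point, the convexity (or a subconvexity) bound for $\Lambda(1/2+iy,\sym^2 f)$ in a short window around $y\asymp t_f$, and a short second moment for $L(1/2+iy,g)$, and the resulting contribution is no larger than the discrete one. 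Assembling the two pieces yields the stated bounds for $\langle E_t, f^2 g\rangle$, and by the remark after Theorem \ref{Jointffg} the identical argument gives the analogous bound for $\int_{\mathbb X}\psi(z)f^2(z)E_t(z)\,\dd\mu z$. The only place where something genuinely delicate happens is the conductor‑dropping short first moment of the $\GL(3)\times\GL(2)$ family $\sym^2 f\times\phi_j$ — but that is exactly Lemma \ref{moment_L_functions_6}, so the whole argument reduces to checking that the $\Gamma$‑factor analysis of Proposition \ref{lastsection} transfers verbatim when $u_k$ is replaced by $E_t$ (immediate, as both enter only through Rankin--Selberg with a negligible spectral parameter) and that the continuous integral, controlled by standard $\zeta$‑ and $\GL(3)$‑bounds, does not dominate.
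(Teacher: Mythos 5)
Your proposal follows essentially the same route as the paper: spectral decomposition of $\langle f^{2},E_{t}g\rangle$, localization to $|t_{j}-t_{g}|\leq t_{f}^{\varepsilon}$ via the Stirling analysis, Cauchy--Schwarz, the mixed moment (\ref{mixmoment}) together with Lemma \ref{moment_L_functions_6} for the $f^{2}$-factor, and a second-moment bound for the $E_{t}g$-factor, with the continuous spectrum handled analogously. The one caveat is your parenthetical Bessel bound $\|E_{t}g\|_{2}^{2}\ll\log t$, which is not justified as stated ($E_{t}$ is unbounded near the cusp and $g^{2}\,\dd\mu$ carries mass up to height $\asymp t_{g}$, so the naive estimate is only $\ll t_{g}^{1+\varepsilon}$); however, the alternative you also offer --- the conductor-dropping second moment $\sum_{|t_{j}-t_{g}|\leq t_{f}^{\varepsilon}}|L(1/2+it,g\times\phi_{j})|^{2}\ll t_{g}^{1+\varepsilon}$ in the spirit of Lemma \ref{meanLin} --- is exactly what the paper uses, so the argument stands.
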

\begin{proof}

For continuous spectrum part in equation (\ref{ffg})  $\langle E_{t}g, f^2\rangle$, the main part is $\sum\limits_{j\geq 1}\langle E_{t}g, \phi_{j}\rangle\langle f^{2} , \phi_{j}\rangle$. By using the same argument above we have
\begin{equation*}
    \begin{aligned}
   \sum\limits_{j\geq 1}\langle E_{t}g, \phi_{j}\rangle\langle f^{2} , \phi_{j}\rangle
    \ll t_{f}^{\varepsilon}[\sum\limits_{ |t_{j} - t_{g}| \leq t_{f}^{\varepsilon}}&\frac{L(1/2,\phi_{j})L(1/2, \sym^{2}f \times \phi_{j})}{t_{j}\prod\limits_{\pm}(1 + |t_{j} \pm 2t_{f}|)^{1/2}}]^{1/2}\\ &\cdot [\frac{1}{t_{g}}\sum\limits_{ |t_{j} - t_{g}| \leq t_{f}^{\varepsilon}}|L(1/2+it, g \times \phi_{j})|^{2} ]^{1/2}.
    \end{aligned}
\end{equation*}
The first term is exactly equation $(\ref{mixmoment})$ and for the second term we have a large sieve estimate $\sum\limits_{ |t_{j} - t_{g}| \leq t_{f}^{\varepsilon}}|L(1/2+it, g \times \phi_{j})|^{2} \ll t_{g}^{1+\varepsilon}$ thanks to the conductor dropping phenomenon. In  conclusion, we give the proof of continuous spectrum part in equation (\ref{ffg}). 
\end{proof}
Hence we prove Theorem \ref{Jointffg} .

\section*{Acknowledgements}

The author would like to thank  Prof. Bingrong Huang for his helpful discussions. He also wants to thank Liangxun Li for his comments.  He gratefully thanks to the referees for the constructive
 comments and recommendations.

\addcontentsline{toc}{section}{references}
\phantomsection

\end{document}